\newtheorem{theorem}{Theorem}[section]
\newtheorem{assumption}[theorem]{Assumption}
\newtheorem{corollary}[theorem]{Corollary}
\newtheorem{definition}[theorem]{Definition}
\newtheorem{example}[theorem]{Example}
\newtheorem{lemma}[theorem]{Lemma}
\newtheorem{remark}[theorem]{Remark}
\newenvironment{proof}[1][Proof]{\textbf{#1.} }
{\ \rule{0.75em}{0.75em}\smallskip}
\numberwithin{equation}{section}
\numberwithin{table}{section}
\numberwithin{figure}{section}
\newcommand{\vect}[1]{\boldsymbol{#1}} 
\newcommand{\DGnabla}{\nabla_{\rm dg}}
\newcommand{\DGdiv}{{\rm div}_{\rm dg}}
\newcommand\comments[1]{\textcolor{black}{#1}}
\begin{document}

\title{A Unified Study of \\Conforming and Discontinuous Galerkin
Methods\thanks{ 
The work of the last author was supported by US Department of Energy
Grant DE-SC0014400 and NSF grant DMS-1522615.  The work of the second
author was partially supported by the National Natural Science
Foundation of China (Grant No.  11771350).
}}
\author{Qingguo Hong\footnote{huq11@psu.edu@psu.edu, Department of
Mathematics, Pennsylvania State University, University Park, PA,
16802, USA} \quad 
Fei Wang\footnote{feiwang.xjtu@xjtu.edu.cn, School of Mathematics and
Statistics, Xi'an Jiaotong University, Xi'an, Shaanxi, 710049,
China} \quad  
Shuonan Wu\footnote{sxw58@psu.edu, Department of Mathematics,
Pennsylvania State University, University Park, PA, 16802, USA}
\quad
Jinchao Xu\footnote{xu@math.psu.edu, Department of Mathematics,
Pennsylvania State University, University Park, PA, 16802, USA}}
\date{}
\maketitle 

\begin{abstract}
A unified study is presented in this paper  for the design and
analysis of different finite element methods (FEMs),  including
conforming and nonconforming FEMs, mixed FEMs, hybrid FEMs,
discontinuous Galerkin (DG) methods, hybrid discontinuous Galerkin
(HDG) methods and weak Galerkin (WG) methods.  Both HDG and WG are
shown to admit inf-sup conditions that hold uniformly with respect to
both mesh and penalization parameters.  In addition, by taking the
limit of the stabilization parameters, a WG method is shown to
converge to a mixed method whereas an HDG method is shown to converge
to a primal method.  Furthermore, a special class of DG methods, known
as the {\it mixed DG methods}, is presented to fill a gap revealed in
the unified framework.
\end{abstract}

{\bf Keywords.} Finite element methods, DG-derivatives, Unified study

\section{Introduction} \label{sec:intro} 
In this paper, we propose a general framework to derive most of the
existing finite element methods (FEMs), and discuss their
relationships.  We will illustrate the main idea by using the
following elliptic boundary value problem
\begin{equation} \label{pois}
\left\{
\begin{aligned}
-{\rm div} (\alpha\nabla u)&= f \qquad {\rm in} \ \Omega, \\
 u&=0 \qquad {\rm on} \ \partial\Omega,
\end{aligned}
\right.
\end{equation}
where $\Omega\subset \mathbb R^d$ ($d\ge 1$) is a bounded domain and
$\alpha: \mathbb{R}^d \rightarrow \mathbb{R}^d$ is a bounded and
symmetric positive definite matrix, with its inverse denoted by $c =\alpha^{-1}$. 

Setting $\vect{p} =-\alpha\nabla u$, the above problem can be formally
written in two different forms.  The first, known as the primal
formulation, can be written as
\begin{equation} \label{H1}
\left\{
\begin{aligned}
c \vect{p} + \nabla u &= 0 \qquad {\rm in}\ \Omega, \\ 
-\nabla^*  \vect{p} &= f \qquad {\rm in}\ \Omega, 
\end{aligned}
\right.
\end{equation}
which requires that $u\in H^1_0(\Omega)$ and $\vect{p}\in \boldsymbol
L^2(\Omega)$.   

The second, known as the mixed formulation, can be written as
\begin{equation}
  \label{HDIV}
\left\{
\begin{aligned}
c \vect{p} -  {\rm div}^*u &= 0 \qquad {\rm in}\ \Omega, \\ 
{\rm div}  \vect{p} &= f \qquad {\rm in}\ \Omega, 
\end{aligned}
\right.
\end{equation}
which requires that $u\in L^2(\Omega)$ and $\vect{p}\in {\boldsymbol
H}({\rm div}; \Omega)$.  

~

The design of FEMs then becomes an appropriate discretization of
$\nabla$ and ${\rm div}$, which amounts to imposing certain continuity
conditions on $u$ and ${\boldsymbol p}\cdot{\boldsymbol n}$ in the
following five different approaches:
\begin{enumerate}
\item strongly ($H^1$ or $\boldsymbol H({\rm div})$ conforming
    elements), 
\item  weakly (nonconforming), 
\item  via Lagrangian multiplier (hybrid methods), 
\item  via Lagrangian multiplier and stabilization --- \comments{stabilized hybrid primal methods and stabilized hybrid mixed methods}, and
\item  via penalization (DG).
\end{enumerate}

The resulting different types of FEMs can then be
fully described in a uniform framework by the notion of DG-gradient
--- $\DGnabla$, DG-divergence --- $\DGdiv$.  Here, $\DGnabla$ (see
\eqref{DG_gradient}) is a generalization of the piecewise gradient which allows $u_h$ to be discontinuous across element boundaries, but uses
an additional Lagrangian multiplier space on the element boundaries.
We note that $\DGnabla$ and $\DGdiv$ in this context, as shown in
Table~\ref{table:weak_derivatives_relation} below, directly correspond
to the weak derivatives introduced by Wang and Ye \cite{wang2013weak}.
We denote by $\{\mathcal{T}_h\}_h$ a family of triangulations of
$\overline{\Omega}$ which satisfy the minimal angle condition. Let
$h_K ={\rm diam}(K)$ and $h = \max\{h_K: K\in \mathcal{T}_h\}$.  For
any $K\in \mathcal{T}_h$, denote $\vect{n}_K$ as the outward unit
normal of $K$.  Denote by ${\cal E}_h$ the union of the boundaries of
the elements $K$ of $\mathcal{T}_h$. \comments{Let $P_h:L^2(\Omega)\rightarrow
V_h$ be the $L^2$ projection and $f_h=P_h f$}.

\begin{definition}[DG-derivatives] \label{DG_derivative_global} 
Let $\tilde{u}_h = (u_h,\hat{u}_h) \in \widetilde{V}_h$, and
$\tilde{\vect{p}}_h = (\vect{p}_h, \hat{\vect{p}}_h) \in
\widetilde{\vect{Q}}_h$. Then the DG-gradient $\DGnabla:
V_h \mapsto \widetilde{\vect{Q}}_h^*$ and the DG-divergence $\DGdiv:
\vect{Q}_h \mapsto \widetilde{V}_h^*$ are defined by 
\begin{align}
\langle \DGnabla u_h, \tilde{\vect{q}}_h \rangle &=
(\nabla_h u_h, \vect{q}_h)_\Omega-  \sum_{K\in \mathcal{T}_h}\langle
u_h, \hat{\vect{q}}_h\cdot \vect{n}_K\rangle _{\partial K}  \qquad
\forall u_h\in V_h,\; \forall \tilde{\vect{q}}_h \in
\widetilde{\vect{Q}}_h, \label{DG_gradient}\\
\langle \DGdiv \vect{p}_h, \tilde{v}_h \rangle &= ({\rm
div}_h\vect{p}_h, v_h)_\Omega - \sum_{K\in \mathcal{T}_h}  \langle
\vect{p}_h\cdot \vect{n}_K, \hat{v}_h\rangle _{\partial K} \qquad
\forall  \tilde{v}_h \in \widetilde V_h,\; \forall \vect{p}_h \in
\vect{Q}_h. \label{DG_divergence}
\end{align}
Here, $V_h$ and $\vect{Q}_h$ are the piecewise scalar and
vector-valued discrete spaces on the triangulation $\mathcal{T}_h$,
respectively, and $\widehat{V}_h$ and $\widehat{\vect{Q}}_h$ are the
piecewise scalar and vector-valued discrete spaces on ${\cal E}_h$,
respectively. $\widetilde{V}_h$ and $\widetilde{\vect{Q}}_h$
are defined as 
$$
\widetilde{V}_h := V_h \times \widehat{V}_h, \qquad  
\widetilde{\vect{Q}}_h = \vect{Q}_h \times \widehat{\vect{Q}}_h. 
$$
$\widetilde{\vect{Q}}_h^*$ and $\widetilde{\vect{V}}_h^*$ are the dual
spaces of  $\widetilde{\vect{Q}}_h$ and $\widetilde{\vect{V}}_h$,
respectively. 
\end{definition}

As for the DG-derivatives defined in Definition
\ref{DG_derivative_global}, we will specify $V_h$, $\widehat{V}_h$,
$\vect{Q}_h$ and $\widehat{\vect{Q}}_h$ at different concurrences.
Let $\hat{u}_h$ and $\hat{\vect{p}}_h$ be defined as
single-valued functions on ${\cal E}_h$. They can be viewed as
certain numerical traces of DG functions.  We want to emphasize that
the DG-derivatives on the discrete spaces are globally defined. The
dual operators of $\DGnabla$ and $\DGdiv$ are denoted by $\DGnabla^*:
\widetilde{\vect{Q}}_h \mapsto V^*_h $ and $\DGdiv^*: \widetilde{V}_h
\mapsto \vect{Q}^*_h$, respectively, such that
\begin{align}
\langle \DGnabla^* \tilde{\vect{p}}_h, v_h \rangle 
& = \langle \tilde{\vect{p}}_h, \DGnabla v_h \rangle 
~~\qquad \forall v_h \in V_h, \label{weak_gradient_dual}\\
\langle \DGdiv^*\tilde{u}_h, \vect{q}_h \rangle &= \langle
\tilde{u}_h, \DGdiv \vect{q}_h \rangle \qquad \forall \vect{q}_h \in
\vect{Q}_h.\label{weak_divergence_dual}
\end{align}
We shall now use these DG-derivatives to formulate different types of
Galerkin methods in \S\ref{subsec:primal}--\S\ref{subsec:DG} below and
summarize these different methods in Table \ref{tab:FEMs}.  We further
give some brief bibliographic comments on the development of these
methods in \S\ref{sc:history}.

\begin{sidewaystable}[!htbp]
\centering
\scriptsize
\begin{tikzpicture}[node distance=15em] 
  
\node[entity] (DG) [align=center] { {\bf
  \color{blue} DG} \\ 
      $c\vect{p}_h
  -\DGdiv^* \tilde{u}_h = 0$ \\
    $-\DGnabla^* \tilde{\vect{p}}_h = f_h$\\ 
    Define $\hat{\vect{p}}_h$ and  $\hat{u}_h$ explicitly 
};

\node[entity] (LDG) [left of=DG, xshift=2em]
[align=center] {{\bf \color{blue} Primal DG} \\
$\hat{\vect{p}}_h = \{\vect{p}_h\} + \eta_e h^{-1} \llbracket u_h
  \rrbracket$ \\ $\hat{u}_h = \{u_h\}$} 
edge [line width=0.5mm, dotted,red] (DG);

\node[entity] (SHY-PRIMAL) [left of=LDG] [align=center] { 
  {\bf \color{blue} Stabilized Hybrid Primal}\\  
  $ c \vect{p}_h + \mathcal{S}_{\vect{p}}^\eta
  \tilde{\vect{p}}_h +\DGnabla u_h = 0$ \\ 
  $-\DGnabla^* \tilde{\vect{p}}_h = f_h$}
edge [line width=0.5mm, dotted,red] (LDG); 

\node[entity] (MDG) [right of=DG, xshift=-2em] 
[align=center] {{\bf \color{blue} Mixed DG } \\ $\hat{\vect{p}}_h =
  \{\vect{p}_h\}$ \\ $\hat{u}_h = \{u_h\} + \eta_e h^{-1}[\vect{p}_h]$
} 
edge [line width=0.5mm, dotted,red] (DG);

\node[entity] (SHY-MIXED) [right of=MDG] [align=center] { 
  {\bf \color{blue} Stabilized Hybrid Mixed}\\ 
  $c\vect{p}_h - \DGdiv^* \tilde{u}_h = 0$\\  
  $\DGdiv \vect{p}_h + \mathcal{S}_u^\tau \tilde{u}_h = f_h$}
edge [line width=0.5mm, dotted,red] (MDG);

\node[entity] (PRIMAL) [above of=DG, yshift=10em] [align=center] { 
  {\bf
  \color{blue} Primal Methods}\\ 
   $c \vect{p}_h + \nabla_h u_h = 0$ \\ 
  $- \nabla_h^* \vect{p}_h = f_h$ \\ 
}
edge [line width=0.5mm,<-,blue] 
node[rotate=-42, yshift=0.8em] {{\color{red} $\tau =
  \rho^{-1}h_K^{-1}\quad\quad   \rho \to 0$}} (SHY-MIXED);    

\node[entity] (MIXED) [below of=DG, yshift=-10em] [align=center] { 
  {\bf \color{blue} Mixed Methods} \\ 
  $c\vect{p}_h - {\rm div}_h^* u_h = 0$ \\ 
  ${\rm div}_h \vect{p}_h = f_h$
}
edge [line width=0.5mm,<-,blue] 
node[rotate=-42, yshift=0.8em] {{\color{red} $\eta =
\rho^{-1}h_K^{-1}\quad \rho \to 0$}} (SHY-PRIMAL);    

\node[entity] (HY-MIXED) [above right of=MIXED, xshift=2.5em, yshift=1em] [align=center] {
  {\bf \color{blue} Hybrid Mixed} \\  
  $c\vect{p}_h -\DGdiv^* \tilde{u}_h = 0$ \\
  $\DGdiv \vect{p}_h =  f_h$
} 
edge [line width=0.5mm,<-,blue] 
node[rotate=42, yshift=0.7em] {{\color{red} Hybridization}} (MIXED)
edge [transform canvas={xshift=3mm}, line width=0.5mm,->,blue] 
node[rotate=42, yshift=-0.6em] {} (MIXED)
edge [line width=0.5mm,->,blue] 
node[rotate=42, yshift=0.7em] {{\color{red} Stabilization}} (SHY-MIXED)
edge [transform canvas={xshift=3mm}, line width=0.5mm,<-,blue] 
node[rotate=42, yshift=-0.5em] {{\color{red} $\tau=0$}} (SHY-MIXED);

\node[entity] (HY-PRIMAL) [below left of=PRIMAL, xshift=-2.5em,
  yshift=-1em] [align=center] { 
  {\bf
  \color{blue} Hybrid Primal}\\ 
   $c \boldsymbol{p}_h + \DGnabla u_h = 0$ \\ 
  $-\DGnabla^* \tilde{\vect{p}}_h = f_h$
}
edge [line width=0.5mm,<-,blue] 
node[rotate=42, yshift=-0.6em] {{\color{red} Hybridization}} (PRIMAL) 
edge [transform canvas={xshift=-3mm}, line width=0.5mm,->,blue] 
node[rotate=42, yshift=0.7em] {} (PRIMAL)
edge [line width=0.5mm,->,blue] 
node[rotate=42, yshift=-0.6em] {{\color{red} Stabilization}} (SHY-PRIMAL)
edge [transform canvas={xshift=-3mm}, line width=0.5mm,<-,blue] 
node[rotate=42, yshift=0.7em] {{\color{red} $\eta=0$}} (SHY-PRIMAL);

\node[entity] (CONMFEM) [left of=MIXED, xshift=-8em]
  [align=center] {{\bf \color{blue} Conforming}\\ {\bf \color {blue}
    Mixed Methods} 
} 
edge [line width=0.5mm,<-,blue] 
node[right, xshift=-4.5em, yshift=0.8em] {{\color{red}
$\vect{Q}_h\subset \boldsymbol H({\rm div}, \Omega)$}} (MIXED);

\node[entity] (NONMFEM) [right of=MIXED, xshift=8em]
  [align=center] {{\bf \color{blue} Nonconforming}\\ {\bf
    \color{blue}Mixed Methods} 
} 
edge [line width=0.5mm,<-,blue] 
node [left, xshift=4.5em, yshift=0.8em] {{\color{red}
$\vect{Q}_h\not\subset \boldsymbol H({\rm div}, \Omega)$}} (MIXED);

\node[entity] (CON) [left of=PRIMAL, xshift=-8em]
[align=center] {{\bf \color{blue} Conforming}\\ {{\bf
  \color{blue}Primal Methods}} \\ 
$\nabla^*(\alpha\nabla u_h)= f_h$} 
edge [line width=0.5mm,<-,blue]
node[left,text width=3cm, xshift=8em] {{\color{red} $V_h \subset
  H_0^1(\Omega)$ \\ \vspace{1mm}$\nabla_h V_h \subset c \vect{Q}_h$}}
  (PRIMAL); 

\node[entity] (NON) [right of=PRIMAL, xshift=8em]
[align=center] {
  {\bf \color{blue} Nonconforming} \\ {\bf \color{blue} Primal Methods}\\
  $\nabla^{*}_h(\alpha\nabla_h u_h)= f_h$} 
edge  [line width=0.5mm,<-,blue]  
node[right,text width=3cm, xshift=-3em] {{\color{red} $V_h
\not\subset H_0^1(\Omega)$\\ \vspace{1mm} $\nabla_h V_h \subset c
  \vect{Q}_h$}} (PRIMAL);

\node (S1) [above of=DG, yshift=-7em] [align=center] {
$
(\DGnabla u_h, \tilde{\vect{q}}_h) =
(\nabla_h u_h, \vect{q}_h) - \langle
u_h, \hat{\vect{q}}_h\cdot \vect{n}\rangle_{\partial
\mathcal{T}_h}  
$ 
\\ \\
$
\langle \mathcal{S}_p^\eta \tilde{\vect{p}}_h, \tilde{\vect{q}}_h
\rangle  := 
\langle\eta  (\vect{p}_h -
\hat{\vect{p}}_h) \cdot \vect{n}, (\vect{q}_h
-\hat{\vect{q}}_h) \cdot \vect{n}\rangle_{\partial \mathcal{T}_h}
$
};

\node (S2) [below of=DG, yshift=7em] [align=center] { 
$
\langle \mathcal{S}_u^\tau \tilde{u}_h, \tilde{v}_h\rangle :=
  \langle \tau(u_h -\hat{u}_h), v_h
-\hat{v}_h\rangle_{\partial \mathcal{T}_h}
$
\\ \\
$
(\DGdiv \vect{p}_h, \tilde{v}_h) =
 ({\rm div}_h\vect{p}_h, v_h) - \langle
\vect{p}_h\cdot \vect{n}, \hat{v}_h\rangle _{\partial
  \mathcal{T}_h} 
$
};
\end{tikzpicture}
\caption{A unified framework of FEMs} \label{tab:FEMs}
\end{sidewaystable}

\subsection{Primal formulation} \label{subsec:primal}
\paragraph{Conforming and nonconforming methods}
The first example of the FEMs using a primal formulation is the
conforming or nonconforming finite element, which requires the
continuity (or some weak continuity) of $u_h$ across the element
boundaries
\begin{equation}\label{eq:primal}
\left\{
\begin{aligned}
c \vect{p}_h + \nabla_h u_h &= 0 \qquad \text{in}~{\vect{Q}}_h^*,\\
-\nabla_h^*{\vect{p}}_h &= f_h   ~~\quad \text{in}~V_h^*.
\end{aligned}
\right.
\end{equation}
Here, $\nabla_h$ is derived by taking the gradient piecewise on each
element. 

\paragraph{Hybrid primal methods}
The hybrid formulation enforces the (weak) continuity of the
aforementioned primal method through a Lagrangian space on element
boundaries, and it can be formally written as 

\begin{equation}\label{eq:hybrid primal}
\left\{
\begin{aligned}
c \vect{p}_h + \DGnabla u_h &= 0 \qquad
\text{in}~\widetilde{\vect{Q}}_h^*,\\
-\DGnabla^* \tilde{\vect{p}}_h &= f_h ~~\quad\text{in}~V_h^*.
\end{aligned}
\right.
\end{equation}

\paragraph{\comments{Stabilized hybrid primal methods}}
The hybrid primal methods are unstable for even-order elements.  As
a remedy, a stabilization term
can be added to the first equation in \eqref{eq:hybrid primal} to
obtain
\begin{equation} \label{eq:WGprimal}
\left\{
\begin{aligned}
c \vect{p}_h + \mathcal{S}_{\vect{p}}^{\eta} \tilde{\vect{p}}_h
+\DGnabla u_h &= 0 \qquad \text{in}~\widetilde{\vect{Q}}_h^*,\\
-\DGnabla^* \tilde{\vect{p}}_h &= f_h ~~\quad \text{in}~V_h^*,
\end{aligned}
\right.
\end{equation}
where $\langle c {\vect{p}}_h, \tilde{\vect{q}}_h \rangle
:=(c\vect{p}_h, {\vect{q}}_h)$, and
\begin{equation}\label{WG:Stabilizer}
\langle \mathcal{S}_p^{\eta} \tilde{\vect{p}}_h, \tilde{\vect{q}}_h
\rangle := \sum_{K\in \mathcal T_h}\langle  \eta(\vect{p}_h - \hat{\vect{p}}_h) \cdot
\vect{n}_K, (\vect{q}_h -\hat{\vect{q}}_h) \cdot
\vect{n}_K\rangle_{\partial {K}}.
\end{equation}
Here, $\eta > 0$ is a stabilization parameter. In most cases, the stabilized 
hybrid primal methods are also named weak Galerkin (WG) methods.

\subsection{Mixed formulation} \label{subsec:mixed}
\paragraph{Mixed methods} The first example of the FEMs 
using a mixed formulation is the mixed finite element method, which
requires the continuity of $\vect{p}_h$ across elements, i.e.,
\begin{equation}\label{eq:mixedmethods}
\left\{
\begin{aligned}
c\vect{p}_h - {\rm div}_h^* {u}_h &= 0 \qquad
\hbox{in}~{\vect{Q}}_h^*,\\
{\rm div}_h \vect{p}_h &= f_h ~~\quad \hbox{in}~V_h^*.
\end{aligned}
\right.
\end{equation}
Here, $ {\rm div}_h$ is calculated by taking the gradient piecewise on each
element. 

\paragraph{Hybrid mixed methods} 
To reduce the number of the globally-coupled degrees of freedom,
hybrid mixed methods are also developed using the Lagrange multiplier
technique. Similar to the hybrid primal methods \eqref{eq:hybrid
primal}, the mixed form \eqref{HDIV} can be reformulated in terms of
DG-divergence as 
\begin{equation}\label{eq:hybrid-mixed}
\left\{
\begin{aligned}
c\vect{p}_h - \DGdiv^*\tilde{u}_h &= 0\qquad
\hbox{in}~{\vect{Q}}_h^*,\\
\DGdiv \vect{p}_h &= f_h~~\quad \hbox{in}~{\widetilde{V}}_h^*.
\end{aligned}
\right.
\end{equation}

\paragraph{\comments{Stabilized hybrid mixed methods}}
Under some choices of the space $\vect {Q}_h$ and $\widetilde{V}_h$,
the hybrid mixed method is no longer stable. Again, as a natural
device, a stabilization term is then added to the hybrid mixed methods to obtain
\begin{equation} \label{eq:hybridgalerkin}
\left\{
\begin{aligned}
c\vect{p}_h -\DGdiv^*\tilde{u}_h &= 0 \qquad
\hbox{in}~{\vect{Q}}_h^*,\\
\DGdiv \vect{p}_h + \mathcal{S}_u^{\tau} \tilde{u}_h  &= f_h ~~\quad
\hbox{in}~{\widetilde{V}}_h^*,
\end{aligned}
\right.
\end{equation}
where $ \langle c {\vect{p}}_h, \vect{q}_h \rangle :=(c\vect{p}_h,
{\vect{q}}_h)$, and
\begin{equation}\label{HDG:Stabilizer}
\langle \mathcal{S}_u^{\tau} \tilde{u}_h, \tilde{v}_h\rangle :=\sum_{K\in \mathcal T_h}\langle \tau(\hat u_h -\hat{u}_h), v_h -\hat{v}_h\rangle_{\partial K}.
\end{equation}
Here, $\tau > 0$ is a stabilization parameter. In most cases, the stabilized hybrid mixed methods
are also named hybrid discontinuous Galerkin (HDG) methods.
\comments{
\begin{remark}
If $\alpha$ (or $c$) is piecewise constant, we can eliminate $\vect{p}_h$ in \eqref{eq:hybridgalerkin} to
obtain the primal WG-FEM introduced in \cite{weakgalerkinLinMu,wang2013weak}:
\begin{equation} \label{primal_WG}
(\alpha \DGdiv^*  \widetilde{u}_h, \DGdiv^*
 \widetilde{v}_h)_\Omega + \sum_{K\in \mathcal{T}_h} \eta\langle u_h -
\hat{u}_h, v_h -\hat{v}_h\rangle_{\partial K} = (f, v_h)
  \qquad \forall \{v_h,\hat{v}_h\} \in \widetilde{V}_h.
\end{equation}
\end{remark}
}

\subsection{Discontinuous Galerkin methods} \label{subsec:DG}
Instead of using the Lagrange multiplier technique, a penalty term is
added in the bilinear form of the discontinuous Galerkin (DG) method to force continuity.
With the concepts of DG-gradient and DG-divergence defined as in
Definition \ref{DG_derivative_global}, most of the DG methods for
approximating the elliptic problem can be written as

\begin{subequations} \label{WG_global1}
\begin{empheq}[left=\empheqlbrace\,]{align}
c\vect{p}_h -\DGdiv^*  \tilde{u}_h &= 0 \qquad \qquad \quad
~~\hbox{in}~{\vect{Q}}_h^*, \label{WG_global_11}\\
-\DGnabla^*  \tilde{\vect{p}}_h &= f_h \qquad \qquad \quad
~\hbox{in}~{V}_h^*, \label{WG_global_21}\\
\hat{\vect{p}}_h &= \comments{\bar{\vect{p}}}(\vect{p}_h, u_h) \qquad {\rm on} \ {\cal E}_h,
  \label{define_flux_11} \\
\hat{u}_h &= \comments{\bar u}(\vect{p}_h, u_h) \qquad \hbox{on} \ {\cal
  E}_h,\label{define_flux_21}
\end{empheq}
\end{subequations}
where $\comments{\bar{\vect{p}}}$ and $\comments{\bar u}$ are the formulas for defining $\hat{\vect{p}}_h$
and $\hat{u}_h$, respectively, in the terms $\vect{p}_h$ and $u_h$, respectively.

If penalization is forced to impose the continuity of $u_h$, we
refer to the DG method as a primal DG method. In this paper, we also
use penalization to impose the continuity of $\vect p_h\cdot \vect n$,
which is called mixed DG method. 

Using the definition of DG-derivatives, we summarize the
relationships of different FEMs in Table \ref{tab:FEMs}. From the
table, we observe that the primal and mixed methods are the
discretization of primal and mixed forms, respectively.  Under certain
restrictions pertaining to finite element spaces, the
(non)conforming primal and mixed FEMs are obtained.  In addition, by
introducing Lagrange multipliers, the hybrid primal and hybrid mixed
FEMs can be derived from primal and mixed FEMs, respectively.
Generally speaking, these hybrid methods are not stable, and specific
finite element spaces need to be identified to make the schemes work.
By adding stabilization terms to the hybrid primal and hybrid mixed
FEMs, one obtains the stabilized hybrid primal (weak Galerkin) and
stabilized hybrid mixed (hybrid discontinuous Galerkin) FEMs, which
are the approximation of the primal form and mixed form, respectively.
Moreover, when the stabilization terms dominate, the solution of the
stabilized primal (mixed) FEMs approaches the approximation of
the mixed (primal) form under certain choices of discrete spaces, which
completes the outer loop in Table \ref{tab:FEMs}.  We emphasize that
only $\DGnabla$ and $\DGnabla^*$ appear in the primal
formulations, and only $\DGdiv$ and $\DGdiv^*$ are found in the mixed formulations. 

Traditionally, the DG methods in our framework can be viewed as a
compromise between the primal and mixed formulations by introducing
both $\DGnabla^*$ and $\DGdiv^*$.  Then different DG schemes can be
obtained by proper choices of numerical fluxes, see Table
\ref{tab:DG-traces}. With the help of this framework, we derive a new
family of mixed DG methods, which can be regarded as the dual form
of primal DG methods.

\subsection{Brief bibliographic comments} \label{sc:history}
The idea of conforming FEMs can be traced back to the 1940s
\cite{hrennikoff1941solution} and the Courant element
\cite{courant1943variational}.  After a decade, many works, such as
\cite{jones1964generalization, Feng1965finite,
fraeijs1965displacement, Yamamoto1966, zlamal1968finite,
pin1969variational, ciarlet1971multipoint, nicolaides1972class},
proposed more conforming elements and presented serious mathematical proofs
concerning error analysis and, hence, established the basic theory of
FEMs.  The first nonconforming element, the Wilson element
\cite{wilson1971incompatible}, was proposed for rectangle meshes. In
1972, Strang \cite{strang1972variational} developed the concept
of variational crimes in a nonconforming finite element method. 
The well-known Crouzeix-Raviart \cite{crouzeix1973conforming} finite
element appeared in 1973. For more examples on nonconforming elements,
we refer the reader to \cite{fortin1983non, fortin1985three,
santos1999nonconforming, park2003p, brenner2015forty} for second-order
elliptic problems, and \cite{wang2006morley, wang2013minimal,
hu2016canonical, wu2017nonconforming, wu2017pm} for higher-order
elliptic problems.

The mixed FEMs are tailored for approximating
problems with several primary variables. These include linear
elasticity in a stress-displacement system, Stokes equations, and flow
of a porous media within a velocity-pressure system.  The condition for the
well-posedness of mixed formulations is known as inf-sup or the
Ladyzhenskaya-Babu\v{s}ka-Breezi (LBB) condition
\cite{brezzi1974existence}.  Common simplicial mixed finite
elements are the Raviart-Thomas elements \cite{raviart1977mixed,
nedelec1980mixed} and the Brezzi-Douglas-Marini finite element
\cite{brezzi1985two, brezzi1987mixed}. Another family of mixed finite
elements was proposed in \cite{nedelec1986new} for tetrahedra, cubes,
and prisms by N{\'e}d{\'e}lec. Further, a family of rectangular mixed
finite elements in two- and three-space variables were designed in
\cite{brezzi1987efficient} by Brezzi, Douglas, Fortin, and Marini.
Mixed FEMs and their applications to various
problems were summarized in monographs by Brezzi, Fortin, and Boffi
\cite{brezzi1991mixed, boffi2013mixed}.

The idea of hybrid methods was proposed in
\cite{fraeijs1965displacement, pian1969basis, pin1969variational,
pian1972finite, henshell1973hybrid}.  The discrete spaces used in
the hybrid primal FEMs are discontinuous \cite{raviart1975hybrid,
wolf1975alternate, raviart1977primal}, and Lagrange multipliers
are adopted to force continuity across the element boundary.  For
instance, based on a primal hybrid variational principle, Raviart and Thomas
\cite{raviart1977primal} viewed the nonconforming finite elements as
discontinuous spaces on which weak continuity was imposed by multiplier space.
Further work on this hybrid idea was done by Babu\v{s}ka, Oden and Lee
in \cite{babuska1977mixed}, in which a flux variable was introduced in
bilinear form. In 1985, hybridization was shown to be more than an
implementation trick by Arnold and Brezzi in \cite{arnold1985mixed}.
More precisely, it was proven that the new unknown introduced by
hybridization could also be interpreted as the Lagrange multiplier
associated with a continuity condition on the approximate flux which
contains additional information about the exact solution.  After yet
nearly another decade, a new hybridization approach was put forward by
Cockburn and Gopalakrishnan \cite{cockburn2004characterization}, one
in which the hybrid formulation not only simplifies the task of
assembling the stiffness matrix for the multiplier, but also can be
used to establish links between apparently unrelated mixed methods.
Their approach also allows new, variable degree versions of those
methods to be devised and analyzed \cite{cockburn2005error,
cockburn2005incompressibleI, cockburn2005incompressibleII,
cockburn2005new}. Recently, a hybrid mixed method for working with
linear elasticity problem was presented in \cite{gong2017new}. 

For the mathematical theory behind the above methods, we refer to the
monographs by Ciarlet \cite{ciarlet2002finite} and by Brenner and
Scott \cite{brenner2007mathematical}. For more detailed discussion
on mixed and hybrid methods, we refer to the monographs
\cite{brezzi1991mixed, roberts1991mixed, boffi2013mixed}. 

The idea of using DG methods for elliptic equations can be traced back
to the late 1960s \cite{lions1968penalty}, and the same approach was
studied again in \cite{aubin1970approximation}.  Recently, DG methods
have been applied to purely elliptic problems; examples include the
interior penalty methods studied in \cite{babuvska1973nonconforming,
douglas1976interior, wheeler1978elliptic, arnold1982interior}, and the
local DG method for elliptic problem in \cite{cockburn1998local}.  DG
methods for diffusion and elliptic problems were considered in
\cite{brezzi1999discontinuous, brezzi2000discontinuous}. Additional
problems utilizing DG methods can be found in
\cite{brenner2005c,liu2009direct, liu2010direct,
hong2012discontinuous}.  A review of the development of DG methods up
to 1999 can be found in \cite{cockburn2000development} by Cockburn,
Karniadakis, and Shu.  In \cite{arnold2002unified}, Arnold, Brezzi,
Cockburn, and Marini unified the analysis of DG methods for elliptic
problems.  An abstract theory for using quasi-optimal nonconforming
and DG methods for elliptic problems was very recently presented in
\cite{veeser2017quasi,veeser2017quasiDG}.

In 2004, a new hybridization approach was presented in
\cite{cockburn2004characterization} by Cockburn and Gopalakrishnan.
This new hybridization approach was also applied to a DG method in
\cite{carrero2006hybridized}.  Using the LDG method to define the
local solvers, a super-convergent LDG-hybridizable Galerkin method for
second-order elliptic problems was designed in \cite{
cockburn2008superconvergent}.  In 2009, a unified analysis for the
hybridization of discontinuous Galerkin, mixed, and continuous
Galerkin methods for second order elliptic problems was presented in
\cite{cockburn2009unified} by Cockburn, Gopalakrishnan, and Lazarov. 
In 2010, a projection-based error analysis of HDG methods was
presented in \cite{cockburn2010projection} by Cockburn, Gopalakrishnan
,and Sayas, in which a projection operator was tailored to obtain the
$L^2$ error estimates for both potential and flux. A projection-based
analysis of the HDG methods used for convection-diffusion equations
for semi-matching nonconforming meshes was presented in
\cite{chen2014analysis}. A connection between the staggered DG method
and an HDG method was shown in \cite{chung2014staggered}.
Applications of HDG methods to linear and nonlinear elasticity
problems can be found in
\cite{soon2009hybridizable,fu2015analysis,kabaria2015hybridizable}.
More references to the recent developments of HDG methods can be found
in \cite{cockburn2016static}, to other applications in
\cite{chen2012analysis, cockburn2015hybridizable,
cockburn2016hybridizable}, to super-convergence analysis in
\cite{cockburn2012conditions, cockburn2012superconvergent,
cockburn2017devising, cockburn2017superconvergence}, and to a
posteriori error analysis in \cite{cockburn2012posteriori,
cockburn2013posteriori, cockburn2016contraction} . 

With the introduction of weak gradient and weak divergence, Wang and
Ye \cite{wang2013weak, wang2014weak} proposed and analyzed a WG method
for a second order elliptic equation formulated as a system of two
first-order linear equations.  Wang and Wang \cite{wang2015weak} gave
a summary of the idea and applications of WG methods for various
problems since the publication of \cite{wang2013weak}.  Wang and Wang
\cite{wang2017primal} also developed a primal-dual WG finite method
for second-order elliptic equations in non-divergence form, and then
they further extended their methods to Fokker-Planck type equations in
\cite{wang2017primalFP}.  Applications of WG to other problems can be
found.  See \cite{wang2016weak} for Stokes equations,
\cite{chen2016weak} for Darcy-Stokes flow, and \cite{mu2015weak} for
Maxwell equations. 

\comments{
In view of derivations, the {\it stablized hybrid mixed method} aims
to properly choose the numerical trace of the flux and can be viewed
as a stabilization approach for the hybrid mixed method, while the
{\it stablized hybrid primal method} stems from the proper definitions
of the weak gradient and weak divergence and can be viewed as a
stabilization approach for the hybrid primal method.  In the contexts
of HDG and WG, their relationship has been discussed in
\cite{cockburn2016static}.
}

The rest of the paper is organized as follows. In \S
\ref{sec:preliminaries}, we present some preliminary materials. In
\S \ref{sec:framework}, using a second-order elliptic problem
as an example, we show that most of the existing FEMs can be rewritten
in compact form by using the concept of DG-derivatives and
discuss their relationships.  In \S\ref{sec:WG}, we present the
well-posedness of WG methods under the specific, parameter-dependent
norms and further the convergence analysis of WG. We present
a similar result for the HDG methods in \S \ref{sec:HDG}.  In \S
\ref{sec:DG}, we discuss the DG and derive the mixed DG methods by
making a dual choice of numerical traces of primal DG methods. In
\S\ref{relationship}, we analyze the relationship between various  
FEMs and show that WG methods converge to mixed methods and that HDG
methods converge to primal methods under the limitation of parameter.
We give a brief summary in the last section.

Throughout this paper, we shall use letter $C$, which is independent
of mesh-size and stabilization parameters, to denote a generic
positive constant which may stand for different values at different
occurrences.  The notations $x \lesssim y$ and $x \gtrsim y$ mean $x
\leq Cy$  and $x \geq Cy$, respectively.

\section{Preliminaries} \label{sec:preliminaries}
In this section, we shall describe some basic notation and properties
of DG-derivatives.  

\subsection{DG notation}
Given a bounded domain $D\subset \mathbb{R}^d$ and a positive integer
$m$, $H^m(D)$ is the Sobolev space with the corresponding usual norm
and semi-norm, which are denoted by $\|\cdot\|_{m,D}$ and
$|\cdot|_{m,D}$, respectively. We abbreviate these by $\|\cdot\|_{m}$ and
$|\cdot|_{m}$, respectively, when $D$ is chosen as $\Omega$.  The
$L^2$-inner product on $D$ and $\partial D$ are denoted by $(\cdot,
\cdot)_{D}$ and $\langle\cdot, \cdot\rangle_{\partial D}$,
respectively.  $\|\cdot\|_{0,D}$ and $\|\cdot\|_{0,\partial D}$ are
the norms of Lebesgue spaces $L^2(D)$ and $L^2(\partial D)$,
respectively, and $\|\cdot\|=\|\cdot\|_{0,\Omega}$.

Let ${\cal E}_h^i={\cal E}_h \setminus \partial\Omega$ be the set of
interior edges and ${\cal E}_h^\partial={\cal E}_h\setminus{\cal
E}_h^i$ be the set of boundary edges. Further, let $h_e ={\rm diam}(e)$. For
$e\in {\cal E}_h^i$, we select a fixed normal unit direction, denoted by
$\boldsymbol{n}_e$. For $e\in {\cal E}_h^\partial$, we specify the
outward unit normal as $\boldsymbol{n}_e$. 
Let $e$ be the common edge of two elements $K^+$ and $K^-$, and let
$\vect{n}^i$ = $\vect{n}|_{\partial K^i}$ be the unit outward normal
vector on $\partial K^i$ with $i = +,-$.  For any scalar-valued
function $v$ and vector-valued function $\vect{q}$, let $v^{\pm}$ =
$v|_{\partial K^{\pm}}$, $\vect{q}^{\pm}$ = $\vect{q}|_{\partial
K^{\pm}}$.  Then, we define averages $\{\cdot\}, \{\!\!\{\cdot
\}\!\!\}$ and jumps $\llbracket \cdot \rrbracket$, $[\cdot]$ as
follows:
\begin{align*}
&\{v\} = \frac{1}{2}(v^+ + v^-),\qquad \{\vect{q}\} =
\frac{1}{2}(\vect{q}^+ + \vect{q}^-), \qquad\{\!\!\{\vect
q\}\!\!\}=\comments{ \frac{1}{2}(\vect{q}^+ + 
\vect{q}^-)\cdot \vect{n}_e}\qquad &{\rm on}\ e\in {\cal E}_h^i,\\
&\llbracket v \rrbracket  = v^+\vect{n}^+ + v^-\vect{n}^- ,\qquad [v]
= \comments{\llbracket v \rrbracket \cdot \vect{n}_e},
\qquad[\vect{q}] = \vect{q}^+\cdot \vect{n}^+ + \vect{q}^-\cdot
\vect{n}^-  \qquad &{\rm on}\ e\in {\cal E}_h^i,\\
&\llbracket v \rrbracket  = v \vect{n}, \qquad[v] = v,
\qquad\{\vect{q}\}=\vect{q},\qquad \{\!\!\{\vect q\}\!\!\}=\vect
q\cdot \vect n \qquad &{\rm on}\ e \in {\cal E}_h^\partial.
\end{align*}
Here, we specify $\vect{n}$ as the outward unit normal direction on
$\partial \Omega$.  

We define some inner products as follows:
\begin{equation} \label{equ:inner-product}
(\cdot,\cdot)_{\mathcal T_h}=\sum_{K\in \mathcal
T_h}(\cdot,\cdot)_{K},~~~~~ \langle\cdot,\cdot\rangle_{\mathcal
E_h}=\sum_{e\in \mathcal E_h}\langle\cdot,\cdot\rangle_{e},
~~~~~\langle\cdot,\cdot\rangle_{\mathcal E^i_h}=\sum_{e\in
\mathcal E^i_h}\langle\cdot,\cdot\rangle_{e},
~~~~\langle\cdot,\cdot\rangle_{\partial\mathcal T_h }=\sum_{K\in
\mathcal T_h}\langle\cdot,\cdot\rangle_{\partial K}.
\end{equation}
We now give more details about the last inner product. For any
scalar-valued function $v$ and vector-valued function $\boldsymbol q$, 
$$
\langle v, \boldsymbol  q \cdot \boldsymbol n \rangle_{\partial
  \mathcal T_h}=\sum_{K\in \mathcal T_h}\langle v, \boldsymbol q\cdot
  \boldsymbol n\rangle_{\partial K}=\sum_{K\in \mathcal T_h}\langle v,
  \boldsymbol q\cdot \boldsymbol n_K\rangle_{\partial K}.
$$
Here, we specify the outward unit normal direction $\boldsymbol n$
corresponding to the element $K$, namely $\boldsymbol n_K$.  In
addition, let $\nabla_h$ and ${\rm div}_h$ be defined by the relation
$$
\nabla_hv|_K=\nabla v|_K, \quad
{\rm div}_h\vect{q}|_K={\rm div} \vect{q}|_K \qquad \forall K \in
\mathcal{T}_h.
$$

With the definition of averages and jumps, we have the
following identity:
\begin{equation} \label{equ:dg-identity_1}
\sum_{K\in\mathcal{T}_{h}} \int_{\partial K} (\vect{q} \cdot \vect{n}_K) v ~ds
= \int_{\mathcal{E}_{h}}\{\vect{q}\}\cdot \llbracket v \rrbracket ~ds 
+ \int_{\mathcal{E}_{h}^i}[\vect{q}]\cdot\{v\} ~ds,
\end{equation}
namely,
\begin{equation}\label{eq:7}
\langle v, \boldsymbol q\cdot \boldsymbol n\rangle_{\partial \mathcal T_h}
=\langle \{\boldsymbol q\},\llbracket v\rrbracket\rangle_{\mathcal E_h}+
\langle [\boldsymbol q],\{v\}\rangle_{\mathcal E_h^i}.
\end{equation}
Further,
\begin{equation}\label{equ:dg-identity_2}
\{\vect{q}\}\cdot \llbracket v
\rrbracket=\{\!\!\{\vect{q}\}\!\!\}[v] \qquad \forall e \in
\mathcal{E}_h.
\end{equation}

Before discussing various Galerkin methods, we need to introduce the
finite element spaces associated with the triangulation
$\mathcal{T}_h$. For $k\ge 0$, we first define the spaces as follows
\begin{equation}\label{Spaces}
\begin{aligned}
V^{k}_h&=\{v_h\in L^2(\Omega): v_h|_{K}\in \mathcal{P}_k(K), \forall
K\in \mathcal T_h \},\\
\boldsymbol Q^k_h& = \{\boldsymbol p_h\in \boldsymbol L^2(\Omega):
\boldsymbol p_h|_K\in \boldsymbol {\mathcal{P}_k(K)}, \forall K\in
\mathcal T_h \},\\
\boldsymbol Q^{k,RT}_h &= \{\boldsymbol p_h\in \boldsymbol L^2(\Omega):
\boldsymbol p_h|_K\in \boldsymbol {\mathcal{P}_k(K)}+\boldsymbol x
P_k(K), \forall K\in \mathcal T_h \},
\end{aligned}
\end{equation}
where $\mathcal{P}_k(K)$ is the space of polynomial functions of
degree at most $k$ on $K$.  We also use the
following spaces associated with $\mathcal{E}_h$
\begin{equation}\label{Edge:spaces}
\begin{aligned}
\widehat{\boldsymbol{Q}}_h &=\{\hat{\boldsymbol p}_h:
\hat{\boldsymbol{p}}_h|_e\in \widehat Q(e)\boldsymbol{n}_e, \forall
e\in \mathcal{E}_h\},\\
{\widehat Q}_h&=\{{\hat p}_h: {\hat p}_h|_e\in \widehat Q(e), \forall e\in
\mathcal{E}_h\}, \\ 
{\widehat V}_h &= \{{\hat v}_h: {\hat v}_h|_e\in \widehat V(e), e\in
\mathcal{E}^i_h, {\hat v}_h|_{\mathcal E_h^{\partial}}=0\}, \\
{\widehat Q}^k_h&=\{{\hat p}_h\in L^2(\mathcal E_h): {\hat
p}_h|_e\in \mathcal{P}_k(e), \forall e\in \mathcal{E}_h\}, \\
{\widehat V}^k_h&=\{{\hat v}_h \in L^2(\mathcal E_h): {\hat
v}_h|_e\in \mathcal{P}_k(e), \forall e\in \mathcal{E}^i_h, {\hat
  v}_h|_{\mathcal E_h^{\partial}}=0\},
\end{aligned}
\end{equation}
where $\widehat Q(e), \widehat V(e)$ are some local spaces on $e$ and
$\mathcal{P}_k(e)$ is the space of polynomial functions of
degree at most $k$ on $e$.

\subsection{Some properties of DG-derivatives}

Are the DG-gradient $\DGnabla$ and DG-divergence $-\DGdiv$ operators 
dual operators with each other like the
classical gradient and divergence operators?  Generally, the answer is no.  
However, by the definition of DG-derivatives
\eqref{DG_gradient}, \eqref{DG_divergence}, 
we have the following relationship:
\begin{equation}
\begin{aligned}
\langle -\DGdiv^* \tilde{u}_h, {\vect{p}}_h \rangle 
& =  -\langle \tilde{u}_h, \DGdiv \vect{p}_h \rangle \\ 
& = ( \nabla_h u_h, \vect{p}_h)_{\mathcal T_h} + \langle \hat {u}_h -
u_h, \vect{p}_h\cdot \vect{n}\rangle _{\partial \mathcal T_h}, \\
\langle \DGnabla {u}_h, \tilde{\vect{p}}_h \rangle
& = (\nabla_h u_h, \vect{p}_h)_{\mathcal T_h}- \langle u_h,
\hat{\vect{p}}_h\cdot \vect{n}\rangle _{\partial \mathcal T_h}, 
\end{aligned}
\end{equation}
and
\begin{equation}
\begin{aligned}
\langle \DGnabla^*  \tilde{\vect{p}}_h, {u}_h \rangle & = \langle
\tilde{\vect{p}}_h, \DGnabla u_h \rangle \\
&= - ({\rm div}_h \vect{p}_h, u_h)_{\mathcal T_h} + \langle (
\vect{p}_h - \hat{\vect{p}}_h)\cdot \vect{n}, u_h\rangle _{\partial
\mathcal T_h}, \\
\langle -\DGdiv {\vect{p}}_h, \tilde{u}_h \rangle &= - ({\rm div}_h
    \vect{p}_h, u_h)_{ \mathcal{T}_h } +   \langle \vect{p}_h\cdot
\vect{n},  \hat{u}_h\rangle _{\partial  \mathcal{T}_h }. 
\end{aligned}
\end{equation}
By direct calculation, we have the following lemma.  

\begin{lemma} \label{lem:conditional-dual}
Assume $\hat u_h=0~\hbox{on}~\mathcal E_h^\partial$. Then, $\DGnabla =
-\DGdiv^*$ if one of the following conditions holds:
\begin{enumerate}
\item [(i)] $\vect{p}_h\cdot \vect{n}_K |_{{\cal E}_h} =
\hat{\vect{p}}_h\cdot \vect{n}_K$;
\item [(ii)] $u_h|_{{\cal E}_h} = \hat{u}_h$;
\item [(iii)] $\hat{\vect{p}}_h\cdot \vect{n}_K = \{\vect{p}_h\}\cdot
\vect{n}_K$ and $\hat{u}_h = \{u_h\}$.
\end{enumerate}
\end{lemma}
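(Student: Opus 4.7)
The identity $\DGnabla=-\DGdiv^*$ will be interpreted as the pairing identity
$\langle \DGnabla u_h,\tilde{\vect p}_h\rangle = \langle -\DGdiv^*\tilde u_h,\vect p_h\rangle$
for every choice of $u_h$, $\tilde u_h$, $\tilde{\vect p}_h$ compatible with whichever of (i)--(iii) is imposed. The plan is to subtract the two displayed formulas for these pairings written just above the lemma. Since both begin with $(\nabla_h u_h,\vect p_h)_{\mathcal T_h}$, the volume contributions cancel exactly, reducing the claim to showing that the boundary residual
$$
R := \langle u_h,(\vect p_h-\hat{\vect p}_h)\cdot\vect n\rangle_{\partial\mathcal T_h} \;-\; \langle \hat u_h,\vect p_h\cdot\vect n\rangle_{\partial\mathcal T_h}
$$
vanishes, using also the standing hypothesis $\hat u_h|_{\mathcal E_h^\partial}=0$. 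Each of the three conditions will be treated as a separate case of $R=0$.

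For case (i) the first summand of $R$ is pointwise zero by hypothesis, and adding the two relations $\vect p_h^{\pm}\cdot\vect n_{K^{\pm}}=\hat{\vect p}_h\cdot\vect n_{K^{\pm}}$ on an interior edge shows $[\vect p_h]=0$; applying the DG identity \eqref{eq:7} to the remaining term $\langle\hat u_h,\vect p_h\cdot\vect n\rangle_{\partial\mathcal T_h}$ then kills both contributions, since $[\vect p_h]=0$ disposes of the interior-edge piece and $\llbracket\hat u_h\rrbracket=0$ (by single-valuedness of $\hat u_h$ on $\mathcal E_h^i$ and vanishing on $\mathcal E_h^\partial$) disposes of the average piece. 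For case (ii), substituting $u_h=\hat u_h$ on $\partial\mathcal T_h$ into the first summand of $R$ makes the $\vect p_h$-parts cancel against the second summand, leaving only $-\langle\hat u_h,\hat{\vect p}_h\cdot\vect n\rangle_{\partial\mathcal T_h}$; this vanishes because $\hat u_h\,\hat{\vect p}_h$ is single-valued on each interior edge while $\vect n_K$ alternates sign across it, with the boundary contribution disposed of by $\hat u_h|_{\mathcal E_h^\partial}=0$.

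Case (iii) is where the bookkeeping requires care. The plan here is to apply \eqref{eq:7} to each $\partial\mathcal T_h$-integral in $R$, converting them into contributions over $\mathcal E_h$ (averages against jumps) and over $\mathcal E_h^i$ (jumps against averages). The single-valuedness of $\hat u_h$ and $\hat{\vect p}_h$ makes $\llbracket\hat u_h\rrbracket$ and $[\hat{\vect p}_h]$ vanish on interior edges, so several terms drop out immediately; substituting $\hat{\vect p}_h\cdot\vect n=\{\vect p_h\}\cdot\vect n$ and $\hat u_h=\{u_h\}$ into the surviving averages then produces exactly the expressions $\langle\{\vect p_h\},\llbracket u_h\rrbracket\rangle_{\mathcal E_h}$ and $\langle[\vect p_h],\{u_h\}\rangle_{\mathcal E_h^i}$ needed to cancel term by term with what came out of the first integral in $R$.

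The main obstacle is precisely this bookkeeping in (iii): keeping straight which quantities are single-valued on $\mathcal E_h^i$ (so that their jumps vanish) versus which carry genuine jumps, and pairing them correctly through \eqref{eq:7} and \eqref{equ:dg-identity_2}. Once the decomposition of $R$ into the averages/jumps language is set up cleanly, the cancellation is automatic, and cases (i) and (ii) reduce to one-line algebraic consequences of $\vect n_K$-antisymmetry across interior edges together with $\hat u_h|_{\mathcal E_h^\partial}=0$.
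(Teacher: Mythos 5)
Your proposal is correct and follows essentially the same route as the paper: the paper's "direct calculation" amounts to subtracting the two displayed pairing formulas so that the volume terms cancel, and then verifying that the remaining boundary residual vanishes under each of (i)--(iii) via the identity \eqref{eq:7}, the single-valuedness of $\hat u_h$ and $\hat{\vect p}_h$, and $\hat u_h|_{\mathcal E_h^\partial}=0$. Your case-by-case treatment, including the observation that (i) forces $[\vect p_h]=0$ and the jump/average bookkeeping in (iii), is exactly the intended argument.
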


Therefore, we say $\DGnabla $ and $-\DGdiv $ are {\it conditionally
dual} with each other.  Based on this observation, we find that the
DG-derivatives are good approximations for classical weak derivatives when one
of the above conditions is satisfied. Furthermore, in the following
subsections, we see that the (stabilized) hybrid primal and
(stabilized) hybrid mixed methods weakly satisfy condition (i) or
(ii), and that the DG methods adopt condition (iii) approximately.

\begin{table}[!htbp]
\begin{center}
\begin{tabular}{|c|c|c|c|c|}
\hline 
DG-derivatives & $\DGnabla$ & $\DGnabla^*$ & $\DGdiv$ & $\DGdiv^*$ \\
\hline 
Weak derivatives \cite{fraeijs1965displacement, wang2013weak} & $-{\rm
div}_w^*$ & $-{\rm div}_w$ & $-\nabla_w^*$ & $-\nabla_w$ \\
\hline 
\end{tabular}
\end{center}
\caption{Relationship of DG-derivatives and weak derivatives when
  $\widehat{u}$ and $\widehat{\vect{p}}$ are single-valued}
\label{table:weak_derivatives_relation}
\end{table}%

The DG-derivatives introduced in this paper are essentially the same
as the weak derivatives first introduced by Wang and Ye
\cite{wang2013weak}, where the weak derivatives are locally defined at
the first place.  Weak derivatives then can be defined globally
element-by-element.  To distinguish the original definition of weak
derivatives by Wang and Ye \cite{wang2013weak}, we introduce the
different names {\it DG-derivatives}.  If $\hat{u}$ and
$\hat{\vect{p}}$ are single-valued globally, then the relationship
between DG-derivatives and weak derivatives is shown in Table
\ref{table:weak_derivatives_relation}.  We introduce DG-derivatives
mainly due to their consistency with the classical weak derivatives on
the conforming spaces as shown in the following lemma.

\begin{lemma} \label{lem:dg-property}
It holds that 
\begin{equation} \label{equ:DG-consistency1}
\DGnabla = \nabla_h\quad {\rm if}\quad V_h \subset
V_{non}=\{u_h\in V_h: \langle u_h,
\hat{\vect{q}}_h\cdot \vect{n}_K \rangle_{\partial \mathcal
T_h}=0, \forall \hat{\vect{q}}_h\in \widehat{\vect{Q}}_h\} . 
\end{equation}
Further, 
\begin{equation} \label{equ:DG-consistency}
\DGnabla = \nabla \quad {\rm if}\quad V_h \subset H^1(\Omega), 
\quad {\rm and} \quad \DGdiv = {\rm div} \quad {\rm if}\quad
\vect{Q}_h \subset \boldsymbol H({\rm div},\Omega). 
\end{equation}
\end{lemma}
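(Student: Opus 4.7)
The proof is a direct unpacking of Definition~\ref{DG_derivative_global} under each hypothesis, using the trace identity \eqref{eq:7} to convert the element-boundary sums into averages and jumps on $\mathcal{E}_h$. The first identity is essentially tautological: the condition $V_h\subset V_{non}$ is precisely the statement that $\sum_{K}\langle u_h,\hat{\vect{q}}_h\cdot\vect{n}_K\rangle_{\partial K}$ vanishes for every $u_h\in V_h$ and every $\hat{\vect{q}}_h\in\widehat{\vect{Q}}_h$, so substitution into \eqref{DG_gradient} immediately gives $\langle \DGnabla u_h,\tilde{\vect{q}}_h\rangle=(\nabla_h u_h,\vect{q}_h)_\Omega$, i.e.\ $\DGnabla u_h=\nabla_h u_h$.

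For $\DGnabla=\nabla$ under $V_h\subset H^1(\Omega)$, I would apply \eqref{eq:7} with $v=u_h$ and $\vect{q}=\hat{\vect{q}}_h$ to rewrite
\begin{equation*}
\sum_{K}\langle u_h,\hat{\vect{q}}_h\cdot\vect{n}_K\rangle_{\partial K}
=\langle\{\hat{\vect{q}}_h\},\llbracket u_h\rrbracket\rangle_{\mathcal{E}_h}
+\langle[\hat{\vect{q}}_h],\{u_h\}\rangle_{\mathcal{E}_h^i}.
\end{equation*}
By definition $\hat{\vect{q}}_h$ is single-valued on each edge (its restriction to $e$ lies in $\widehat{Q}(e)\vect{n}_e$), so $[\hat{\vect{q}}_h]=0$ on interior edges; conformity $u_h\in H^1(\Omega)$ yields $\llbracket u_h\rrbracket=0$ on $\mathcal{E}_h^i$, with the boundary-edge contribution absorbed by the standing homogeneous boundary condition attached to the primal space. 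The edge sum therefore vanishes, and since $\nabla_h u_h$ coincides with the classical gradient $\nabla u_h$ as an $L^2$-function for $u_h\in H^1(\Omega)$, the identification follows.

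The argument for $\DGdiv={\rm div}$ under $\vect{Q}_h\subset\boldsymbol{H}({\rm div};\Omega)$ is strictly parallel. Applying \eqref{eq:7} to $\sum_{K}\langle\vect{p}_h\cdot\vect{n}_K,\hat{v}_h\rangle_{\partial K}$, the jump $\llbracket\hat{v}_h\rrbracket$ vanishes because $\hat{v}_h$ is single-valued on $\mathcal{E}_h^i$ and zero on $\mathcal{E}_h^\partial$ by the very definition of $\widehat{V}_h$ in \eqref{Edge:spaces}; the normal jump $[\vect{p}_h]$ vanishes on $\mathcal{E}_h^i$ by $\boldsymbol{H}({\rm div})$-conformity of $\vect{p}_h$. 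Hence the boundary sum drops and $\langle\DGdiv\vect{p}_h,\tilde{v}_h\rangle=({\rm div}_h\vect{p}_h,v_h)_\Omega=({\rm div}\,\vect{p}_h,v_h)_\Omega$, using that $\vect{p}_h\in\boldsymbol{H}({\rm div};\Omega)$ implies ${\rm div}_h\vect{p}_h={\rm div}\,\vect{p}_h$ in $L^2(\Omega)$. The only real difficulty in the whole lemma is bookkeeping: tracking which factor in each edge term is single-valued, which is conforming, and which vanishes on $\partial\Omega$, so that all the jump/average terms produced by \eqref{eq:7} drop out cleanly; there is no analytic content beyond the standard $H^1$ and $\boldsymbol{H}({\rm div})$ trace theory.
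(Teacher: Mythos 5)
Your proof is correct and follows essentially the same route as the paper: unpack Definition~\ref{DG_derivative_global} and show the element-boundary sum vanishes under each hypothesis (the paper even takes a small detour through elementwise integration by parts for the first identity, which your direct substitution avoids). For the second and third claims the paper merely says they are ``proven in a similar way,'' and your use of identity \eqref{eq:7} together with the single-valuedness of $\hat{\vect{q}}_h$, $\hat{v}_h$ and the conformity of $u_h$, $\vect{p}_h$ is exactly the intended bookkeeping.
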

\begin{proof}
If $u_h \in V_{non}$, for any $\tilde{\vect q}_h \in \widetilde{\vect
Q}_h$, we have 
$$
\begin{aligned}
\langle \DGnabla u_h, \tilde{\vect{q}}_h \rangle &= -(u_h, {\rm
div}\vect{q}_h)_{\mathcal T_h} + \langle u_h, (\vect{q}_h -
  \hat{\vect{q}}_h)\cdot \vect{n}_K \rangle_{\partial \mathcal T_h} \\
&= -(u_h, {\rm div}\vect{q}_h)_{\mathcal T_h} + \langle u_h, \vect{q}_h
  \cdot \vect{n} \rangle_{\partial \mathcal{T}_h}\\
  &=(\nabla_h u_h, \vect{q}_h)_{\mathcal T_h},
\end{aligned}
$$  
which gives rise to the first consistency relationship
\eqref{equ:DG-consistency1}.  The second can be proven in a similar
way.  
\end{proof}

\section{Basic setup} \label{sec:framework}

Now we start with the second-order elliptic equation \eqref{pois} and
set $\vect {p}=-\alpha\nabla u$ to obtain the following form 
\begin{equation}\label{H11}
\left\{
\begin{aligned}
c \vect{p} + \nabla u &= 0 \qquad  {\rm in}\ \Omega, \\ 
{\rm div} \vect{p} &= f  \qquad {\rm in}\ \Omega. \\
\end{aligned}
\right.
\end{equation}
Multiplying the first and second equations by $\boldsymbol q_h\in
\boldsymbol Q_h$ and $v_h\in V_h$, respectively, then integrating on an
element $K\in \mathcal T_h$ we get
\begin{equation}\label{eq:element}
\left\{
\begin{aligned}
\displaystyle (c\boldsymbol p, \boldsymbol q_h)_K-(u, {\rm div}
\boldsymbol q_h)_K+\langle u, \boldsymbol q_h\cdot \boldsymbol
n_K\rangle_{\partial K} &= 0 \qquad \qquad\quad \forall \boldsymbol
q_h\in \boldsymbol Q_h,\\
\displaystyle (\boldsymbol p, \nabla v_h)_K-\langle \boldsymbol p\cdot
\boldsymbol n_K, v_h\rangle_{\partial K}&=-(f, v_h)_K ~\quad \forall
v_h\in V_h.
\end{aligned}
\right.
\end{equation}
Summing on all $K\in \mathcal T_h$, we have  
\begin{equation} \label{eq:summing}
\left\{
\begin{aligned}
(c\boldsymbol p, \boldsymbol q_h)_{\mathcal T_h} - (u, {\rm div}_h
\boldsymbol q_h)_{\mathcal T_h} + \langle u, \boldsymbol q_h\cdot
\boldsymbol n\rangle_{\partial \mathcal{T}_h} &= 0 \qquad \qquad \quad
\forall \boldsymbol q_h\in \boldsymbol Q_h,\\
(\boldsymbol p, \nabla_h v_h)_{\mathcal T_h} - \langle \boldsymbol
p\cdot \boldsymbol n, v_h\rangle_{\partial \mathcal{T}_h} &= -(f,
v_h)_{\mathcal T_h} ~\quad \forall v_h\in V_h.
\end{aligned}
\right.
\end{equation}
Now, we approximate $u$ and $\boldsymbol p$ by $u_h\in V_h$ and
$\boldsymbol p_h\in \boldsymbol Q_h$, respectively, and the trace of
$u$ and the flux $\boldsymbol p\cdot\boldsymbol n$ on $\partial K$ by
$\check u_h$, $\check{\boldsymbol p}_h \cdot\boldsymbol n$ (see Figure
\ref{fig:checks}). Hence, we have
\begin{equation}\label{eq:common:1}
\left\{
\begin{aligned}
(c\boldsymbol p_h, \boldsymbol q_h)_{\mathcal T_h} - (u_h, {\rm div}_h
    \boldsymbol q_h)_{\mathcal T_h} + \langle \check u_h, \boldsymbol
q_h\cdot \boldsymbol n\rangle_{\partial \mathcal{T}_h} &= 0 \qquad
\qquad \quad \forall \boldsymbol q_h\in \boldsymbol Q_h,\\
(\boldsymbol p_h, \nabla_h v_h)_{\mathcal T_h} - \langle
\check{\boldsymbol p}_h\cdot \boldsymbol n, v_h\rangle_{\partial
\mathcal{T}_h} &= -(f, v_h)_{\mathcal T_h} ~\quad \forall v_h\in
V_h.
\end{aligned}
\right.
\end{equation}
Next, we derive appropriate equations for the variables of
$\check u_h$ and $\check{\boldsymbol p}_h$. There are three different
approaches. The starting point of the first two approaches is the
following relationship:
\begin{equation}\label{hdg-wg}
\check {\boldsymbol p}_h \cdot \boldsymbol n_K + \tau \check u_h
=\boldsymbol p_h \cdot \boldsymbol n_K + \tau \hat {P}_h(u_h), \qquad
\check{\boldsymbol p}_h={\check p}_h\boldsymbol n_e.
\end{equation}
Here, $\hat {P}_h$ is an operator which will be specified later.
Note that we only use one of ${\check p}_h$ and $\check u_h$ as an
unknown and then use \eqref{hdg-wg} to determine the other.  The third
approach is to define ${\check p}_h$ and $\check u_h$ in terms of $u_h$
and $\boldsymbol p_h$. The details are given below. 
\begin{figure}[!htbp]
\centering
\begin{tikzpicture}[scale=1.2]

\draw [thick] (0,0)--(2.5,1);
\draw [thick] (2.5,1)--(2.5,-1);
\draw [thick] (0,0)--(2.5,-1);
\draw [thick] (5,0)--(2.5,-1);
\draw [thick] (5,0)--(2.5,1);

\draw(1,0)node{$K_1$};
\draw(4,0)node{$K_2$};

\draw(2.02,-0.3)node{{\footnotesize $\check{\boldsymbol p}_1\cdot
  \boldsymbol{n}_{K_1}$}};
\draw(2.35,0.4)node{{\footnotesize $\check{u}_1$}};
\draw(3.06,-0.3)node{{\footnotesize $\check{\boldsymbol p}_2\cdot
  \boldsymbol{n}_{K_2}$}};
\draw(2.7,0.4)node{{\footnotesize $\check{u}_2$}};
\end{tikzpicture}
\caption{$\check{\boldsymbol p}_h\cdot {\boldsymbol n}$ and
  $\check{u}_h$}
\label{fig:checks}
\end{figure}

\paragraph{First approach: \comments{Stabilized hybrid primal (WG)}}
We first set $\hat P_h$ to be an identity operator in \eqref{hdg-wg} and
$\check{\boldsymbol p}_h := \hat{\boldsymbol p}_h = \hat
p_h\boldsymbol n_e\in \widehat{\boldsymbol Q}_h $ to be a single-valued
unknown.  The ``continuity'' of $\check u_h$ is then enforced weakly
as follows:
\begin{equation}\label{eq:common:2}
\langle{\check u}_h,\hat{\boldsymbol q}_h\cdot \boldsymbol
n\rangle_{\partial\mathcal T_h}=0 \qquad \forall \hat{\boldsymbol q}_h
\in \widehat{\boldsymbol Q}_h,
\end{equation}
where $\check u_h$ is again given by \eqref{hdg-wg}.  From the
identity \eqref{equ:dg-identity_1} and the fact that
$[\hat{\boldsymbol q}_h]=0$, a straightforward calculation shows that
\eqref{eq:common:2} can be rewritten as 
\begin{equation}\label{jump:mean:product}
\langle[{\check u_h}], {\hat q}_h\rangle_{\mathcal E_h} := \sum_{e\in
\mathcal{E}_h}\langle[{\check u_h}], {\hat q}_h\rangle_{e}=0 \qquad
\forall {\hat q}_h\in {\widehat Q}_h.
\end{equation} 
From Definition \ref{DG_derivative_global} for the DG-gradient, WG
methods can be rewritten exactly as \eqref{eq:WGprimal}.  We should note
that the stabilization parameter $\eta$ in \eqref{WG:Stabilizer} is in
fact $\tau^{-1}$, where $\tau$ is the parameter shown in
\eqref{hdg-wg}.  As a special case, when $\eta=0$, we obtain the
hybrid primal methods \cite{raviart1975hybrid, wolf1975alternate,
raviart1977primal}, namely \eqref{eq:hybrid primal}. 

As a further special case, when $\eta=0$ and $\widehat{\boldsymbol
Q}_h=\{\boldsymbol 0\}$, we obtain the primal methods. Then, under
certain ``continuity'' properties pertaining to $V_h$ and $\boldsymbol
Q_h$, the operator $\DGnabla^*$ reduces to ${\nabla_h^*}$ and the
operator $\DGnabla$ reduces to $\nabla_h$ (see Lemma
\ref{lem:dg-property}).  Hence, the primal methods read as
\eqref{eq:primal}.  For instance, if we choose $V_h$ satisfying the
``continuity'' condition $V_h\subset H^1_0(\Omega)$ and $\boldsymbol
Q_h$ such that $\nabla V_h\subset c\boldsymbol Q_h$, we obtain the
conforming FEMs \cite{argyris1954energy, turner1956stiffness,
mikhlin1964variational, cea1964approximation, pian1964derivation,
Feng1965finite}.  And if we choose the $V_h$ satisfying the weak
``continuity'' condition 
\begin{equation} \label{nonconforming}
V_h = V_{non}^{k+1} = \{v_h \in V_h^{k+1}, \int_{e} [v_h]\cdot
\hat{{q}}_h\, ds = 0, \forall \hat{{q}}_h\in
\widehat{Q}^k_h, \forall e\in {\cal E}_h^i\},
\end{equation}
we obtain the Crouzeix-Raviart (CR) nonconforming element
\cite{crouzeix1973conforming} when $k=0$ and $\nabla_h V_h\subset
c\boldsymbol Q_h$.

\paragraph{Second approach: \comments{Stabilized hybrid mixed (HDG)}}
We set $\check u_h:=\hat u_h\in \hat V_h$ to be a single-valued unknown.
The ``continuity'' of $\check {\boldsymbol p}_h$ is then enforced
weakly as follows:
\begin{equation}\label{eq:common:4}
\langle\check{\boldsymbol p}_h\cdot\boldsymbol{n},
\hat{v}_h\rangle_{\partial\mathcal T_h}=0 \qquad\forall \hat v_h \in
\widehat V_h,
\end{equation}
where $\check{\boldsymbol p}_h$ is given by \eqref{hdg-wg}.  From the
identity \eqref{equ:dg-identity_1} and the fact that $[\hat{v}_h]=0$,
a straightforward calculation shows that \eqref{eq:common:4} can be
rewritten as 
\begin{equation} 
\langle [{\check {\boldsymbol p}}_h], \hat{v}_h\rangle_{\mathcal E^i_h} := 
\sum_{e\in \mathcal{E}^i_h} \langle [\check{\boldsymbol p}_h], \hat v_h \rangle_e =0
\qquad \forall \hat v_h\in \widehat V_h.
\end{equation}
If $\hat P_h$ is an identity operator, using the DG-divergence from
Definition \ref{DG_derivative_global}, we can rewrite the standard HDG
method as \eqref{eq:hybridgalerkin}.  If $\hat P_h$ is a local $L^2$
projection, namely 
$$
\hat{P}_h|_{\partial K} := \hat P_{\partial K}:L^2(\partial
K)\mapsto \widehat Q(\partial K),
$$
where $\widehat Q(\partial K)$ is the trace space on $\partial K$,
namely $\widehat Q(\partial K)=\bigcup\limits_{e\in \partial K}
\widehat{Q}(e)$, we obtain the modified HDG methods with reduced
stabilization \cite{oikawa2015hybridized}. 


As a special case, when $\tau=0$, we obtain the hybrid mixed methods
\cite{arnold1985mixed,brezzi1991mixed,cockburn2004characterization},
namely \eqref{eq:hybrid-mixed}.  As a further special case, when
$\tau=0$ and ${\widehat V}_h=\{0\}$, we obtain the mixed methods.
Then, under certain ``continuity'' properties pertaining to $\boldsymbol
Q_h$ and $V_h$, the operator $\DGdiv$ reduces to ${\rm div}_h$. Hence,
the mixed methods \cite{raviart1977mixed, nedelec1980mixed,
brezzi1985two, brezzi1987mixed, brezzi1991mixed, boffi2013mixed}
read as \eqref{eq:mixedmethods}.
 
\paragraph{Third approach: DG}
We define $\check{\boldsymbol p}_h= \hat{\boldsymbol p}_h$ and $\check
u_h=\hat u_h$ in terms of $u_h$ and $\boldsymbol p_h$, namely 
\begin{equation}
\left\{
\begin{array}{l}
\hat{\boldsymbol{p}}_h = \comments{\bar{\vect{p}}}(\boldsymbol p_h, u_h) \qquad \hbox{on}~
\mathcal E_h, \\
\hat u_h = \comments{\bar u}(\boldsymbol p_h, u_h) \qquad \hbox{on}~ \mathcal E_h.
\end{array}
\right.
\end{equation}

These three different approaches can be summarized in Table \ref{WG:HDG:DG}.

\begin{table}[!htbp]
\begin{center}
\begin{tabular}{|c|c|c|c|}
 \hline
Volume Equations
& \multicolumn{3}{|c|}{
$
\begin{aligned}\displaystyle
(c\boldsymbol p_h, \boldsymbol q_h)_{\mathcal T_h}+(u_h, {\rm
div}_h\boldsymbol q_h)_{\mathcal T_h} 
+\langle \check u_h, \boldsymbol  q_h\rangle_{\partial\mathcal T_h}&=0,
~~\qquad\qquad\forall \boldsymbol q_h\in \boldsymbol Q_h,\\
\displaystyle
(\boldsymbol p_h, \nabla_h v_h)_{\mathcal T_h}-\langle v_h,
\check{\boldsymbol p}_h\rangle_{\partial \mathcal T_h} &= -(f,
v_h)_{\mathcal T_h} \quad \forall v_h\in V_h.
\end{aligned}
$
}\\
\hline
\multirow{2}{*}{Interelement Equations}
&   \multicolumn{2}{|c|}{
$\check{\boldsymbol p}_h\cdot\boldsymbol n_K+\tau \check u_h
=\boldsymbol p_h\cdot\boldsymbol n_K+\tau u_h$
}& \multirow{2}{*}{
$
\begin{array}{l}
\hat{\boldsymbol p}_h=\comments{\bar{\vect{p}}}(\boldsymbol p_h, u_h) \\
\hat u_h=\comments{\bar u}(\boldsymbol p_h, u_h) 
\end{array}
$
}
\\
\cline{2-3}
& 
$\langle[{\check u_h}], {\hat q}_h\rangle_{\mathcal E_h}=0~~\forall
{\hat q}_h \in {\widehat Q}_h$
&
$
\langle  \hat{v}_h,[{\check {\boldsymbol p}}_h]\rangle_{\mathcal E^i_h}=0~~\forall
\hat v_h \in \widehat V_h $&\\[0.85ex]
\hline
Type of Methods & WG & HDG & DG\\
\hline
\end{tabular}
\end{center}
\caption{Summary of WG,  HDG and DG methods}
\label{WG:HDG:DG}
\end{table}

\section{Stability and convergence analysis of \comments{stabilzed hybrid primal (WG) methods}} \label{sec:WG}
The \comments{stabilized hybrid primal (WG)} methods read: Find $(\tilde{\boldsymbol p}_h, u_h)\in
\widetilde{\boldsymbol Q}_h\times V_h$ such that for any
$(\tilde{\boldsymbol q}_h, v_h)\in \widetilde{\boldsymbol Q}_h\times
V_h$,
\begin{equation}\label{Stabilizedpweak_WG}
\left\{
\begin{aligned}
a_w(\tilde{\boldsymbol p}_h,\tilde{\boldsymbol
    q}_h)+b_w(u_h,\tilde{\boldsymbol q}_h) &= 0,\\
b_w(\tilde{\boldsymbol p}_h,v_h) &= -(f,v_h)_{\mathcal T_h}.
\end{aligned}
\right.
\end{equation}
Here,
\begin{equation}\label{WG_aform}
\begin{aligned}
a_w(\tilde{\boldsymbol p}_h, \tilde{\boldsymbol q}_h) &= (c
\boldsymbol{p}_h, \boldsymbol{q}_h) +
\langle\eta(\boldsymbol{p}_h-\hat{\boldsymbol p}_h)\cdot
\boldsymbol{n}, (\boldsymbol{q}_h-\hat{\boldsymbol q}_h)\cdot
\boldsymbol{n}\rangle_{\partial {\mathcal T_h}},\\
b_w(\tilde{\boldsymbol p}_h,v_h) &= (\boldsymbol{p}_h, \nabla_h
v_h)_{\mathcal T_h}-( \hat{\boldsymbol
p}_h\cdot\boldsymbol{n}_K,{v}_h)_{\partial \mathcal T_h},
\end{aligned}
\end{equation}
and $\eta > 0$ is the stabilized parameter.

The following lemma shows the consistency property of \comments{stabilized hybrid primal (WG)} methods.
\begin{lemma}\label{Error:common:WG}
Let $f\in L^2(\Omega)$ and $(\boldsymbol p, u)$ be the solution of
\eqref{H1} or \eqref{HDIV}, then $(\boldsymbol p,  u)$ satisfies the
following consistency property 
\begin{equation}\label{Stabilizedpweak_WG:consistence}
\left\{
\begin{aligned}
a_w(\boldsymbol{p},\tilde{\boldsymbol q}_h) + b_w(u,\tilde{\boldsymbol
q}_h) &= 0 \qquad\qquad\qquad \forall \tilde{\boldsymbol q}_h\in
\widetilde{\boldsymbol Q}_h, \\
b_w(\boldsymbol{p},v_h) &= -(f,v_h)_{\mathcal T_h} \qquad \forall v_h\in V_h.
\end{aligned}
\right.
\end{equation}
\end{lemma}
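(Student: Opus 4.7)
The plan is to substitute the exact pair $(\boldsymbol{p}, u)$ -- interpreted as a tilde-variable via the canonical identifications $\hat{\boldsymbol p} := \boldsymbol{p}|_{\mathcal E_h}$ and $\hat{u} := u|_{\mathcal E_h}$ -- directly into the two WG identities and to reduce each to the strong-form equations $c\boldsymbol{p} + \nabla u = 0$ and $\mathrm{div}\,\boldsymbol{p} = f$ by means of elementwise integration by parts. The key enabling observation is that $\boldsymbol{p} \in \boldsymbol H(\mathrm{div};\Omega)$ has a single-valued normal trace on $\mathcal E_h$, so the choice $\hat{\boldsymbol p} := \boldsymbol{p}|_{\mathcal E_h}$ forces $(\boldsymbol{p} - \hat{\boldsymbol p})\cdot\boldsymbol{n}_K \equiv 0$ on every $\partial K$; this makes the stabilization contribution in $a_w$ vanish outright.

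For the first identity, I would exploit this cancellation to rewrite $a_w(\boldsymbol{p},\tilde{\boldsymbol q}_h) = (c\boldsymbol{p},\boldsymbol{q}_h)_{\mathcal T_h}$, which by the constitutive equation in \eqref{H1} equals $-(\nabla u,\boldsymbol{q}_h)_{\mathcal T_h}$. Reading $b_w(u,\tilde{\boldsymbol q}_h)$ symmetrically as $(\boldsymbol{q}_h,\nabla_h u)_{\mathcal T_h} - \langle \hat{\boldsymbol q}_h\cdot\boldsymbol{n}_K, u\rangle_{\partial\mathcal T_h}$, the first summand matches $(\nabla u,\boldsymbol{q}_h)_{\mathcal T_h}$ since $u$ is smooth, cancelling $a_w(\boldsymbol{p},\tilde{\boldsymbol q}_h)$. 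The remaining boundary summand vanishes because $\hat{\boldsymbol q}_h$ is single-valued on $\mathcal E_h$ while $u$ is continuous across every interior face (the two $\boldsymbol{n}_K$ contributions from adjacent elements are opposite), and $u = 0$ on $\partial \Omega$.

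For the second identity, I would apply Green's identity on each $K$ to obtain $(\boldsymbol{p},\nabla v_h)_K = -(\mathrm{div}\,\boldsymbol{p}, v_h)_K + \langle \boldsymbol{p}\cdot\boldsymbol{n}_K, v_h\rangle_{\partial K}$; summing over $\mathcal T_h$ and substituting $\hat{\boldsymbol p}\cdot\boldsymbol{n}_K = \boldsymbol{p}\cdot\boldsymbol{n}_K$ into $b_w(\boldsymbol{p}, v_h)$ cancels the $\partial K$-integrals exactly, leaving $-(\mathrm{div}\,\boldsymbol{p}, v_h)_{\mathcal T_h} = -(f, v_h)_{\mathcal T_h}$ by \eqref{H11}.

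There is no substantial technical obstacle; the only subtlety is interpretive: $a_w$ and $b_w$ as defined in \eqref{WG_aform} expect tilde-variables, so the statement of the lemma implicitly commits one to the identification $\hat{\boldsymbol p} \equiv \boldsymbol{p}|_{\mathcal E_h}$ and $\hat{u} \equiv u|_{\mathcal E_h}$. Once that identification is in place, the proof reduces to two applications of integration by parts together with the two strong-form equations, and no test-function-dependent quantity is left uncancelled.
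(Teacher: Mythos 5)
Your proof is correct and is precisely the standard argument the paper relies on (the lemma is stated there without proof): identify the exact solution with a tilde-variable through its single-valued traces $\hat{\boldsymbol p}=\boldsymbol p|_{\mathcal E_h}$, $\hat u = u|_{\mathcal E_h}$ so that the stabilization term in $a_w$ vanishes, then reduce both identities to the strong-form equations by elementwise integration by parts, with the remaining face sums cancelling because $\hat{\boldsymbol q}_h$ is single-valued and $u$ is continuous with $u=0$ on $\partial\Omega$. The only caveat, which the paper itself glosses over, is that $f\in L^2(\Omega)$ alone gives $\boldsymbol p\cdot\boldsymbol n_K$ only in $H^{-1/2}(\partial K)$, so the $L^2(\partial K)$ pairings appearing in $a_w$ and $b_w$ implicitly require slightly more regularity of $\boldsymbol p$ (e.g.\ $\boldsymbol p\in\boldsymbol H^s(\Omega)$ with $s>1/2$, consistent with the $\boldsymbol p\in\boldsymbol H^1(\Omega)$ assumed in the subsequent error theorems).
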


\subsection{Gradient-based uniform inf-sup condition}

In this subsection, we shall show the well-posedness of WG methods
\eqref{WG_aform} when choosing the parameter $\eta$ as $\eta =\rho
h_K$ for $\rho>0$. More precisely, we will give the uniform inf-sup
condition under the following parameter-dependent norms 
\begin{equation}\label{WG:grad:norm:v}
\begin{aligned}
\|\tilde{\boldsymbol p}_h\|^2_{0,\rho,h} &= (c \boldsymbol{p}_h,
\boldsymbol{p}_h)_{\mathcal T_h} + \rho\sum_{K\in
\mathcal{T}_h} h_K \langle(\boldsymbol{p}_h-\hat{\boldsymbol
p}_h)\cdot \boldsymbol{n}_K, (\boldsymbol{p}_h-\hat{\boldsymbol
p}_h)\cdot \boldsymbol{n}_K\rangle_{\partial K}, \\
\|v_h\|^2_{1,\rho,h} &= \|\nabla_h v_h\|^2 + \rho^{-1}\sum_{e\in
  \mathcal{E}_h}h_e^{-1}\|\hat Q_e([v_h])\|^2_{0,e},
\end{aligned}
\end{equation}
where $\hat Q_e$ is the $L^2$ projection from $L^2(e)$ to $\widehat
Q(e)$.  We point out that $\|v_h\|_{1,\rho,h}$ is indeed a norm on
$V_h$ if $\widehat Q_h^0\subset \widehat Q_h$, namely $\widehat Q_h$
contains the piecewise constant space on $\mathcal E_h$.

Using these parameter-dependent norms \eqref{WG:grad:norm:v}, we
have the following results, whose details will be reported in
\cite{hong2017uniformly, hong2018extended}.

\begin{theorem}[\cite{hong2017uniformly, hong2018extended}]\label{bounded:coercivity:aw}
For any $0<\rho\leq 1$, and for any $\tilde{\boldsymbol
p}_h, \tilde{\boldsymbol q}_h\in \widetilde{\boldsymbol Q}_h, v_h\in
V_h$, we have 
\begin{equation} \label{equ:WG-bound-grad}
\begin{aligned}
|a_w(\tilde{\boldsymbol p}_h, \tilde{\boldsymbol q}_h)| &\leq
\|\tilde{\boldsymbol p}_h\|_{0,\rho,h} \|\tilde{\boldsymbol
  q}_h\|_{0,\rho,h}, \\
b_w(\tilde{\boldsymbol p}_h, v_h) & \leq C_w \|\tilde{\boldsymbol
  p}_h\|_{0,\rho,h} \|v_h\|_{1,\rho,h}, \\
a_w(\tilde{\boldsymbol p}_h, \tilde{\boldsymbol p}_h) &\geq
\|\tilde{\boldsymbol p}_h\|^2_{0,\rho,h},
\end{aligned}
\end{equation}
where $C_{w}$ is independent of both mesh size $h$ and $\rho$.
\end{theorem}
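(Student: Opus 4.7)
The plan is to establish the three inequalities in \eqref{equ:WG-bound-grad} separately. The first (continuity of $a_w$) and the third (coercivity of $a_w$) are essentially immediate from the definitions, while the second (continuity of $b_w$) is the main work.

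For the continuity of $a_w$, I would apply Cauchy--Schwarz to each of the two terms in $a_w$: the $L^2$-product $(c\boldsymbol p_h, \boldsymbol q_h)$ bounds by $(c\boldsymbol p_h, \boldsymbol p_h)^{1/2}(c\boldsymbol q_h, \boldsymbol q_h)^{1/2}$, and the boundary term, after extracting $\eta = \rho h_K$, pairs exactly against the second contribution in $\|\cdot\|_{0,\rho,h}$; summing via Cauchy--Schwarz for sequences gives the bound with constant $1$. The coercivity is even simpler: the identity $a_w(\tilde{\boldsymbol p}_h, \tilde{\boldsymbol p}_h) = \|\tilde{\boldsymbol p}_h\|_{0,\rho,h}^2$ is an equality by construction of the norm.

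The real work is the continuity of $b_w$. I would split $b_w(\tilde{\boldsymbol p}_h, v_h)$ into a volume part $(\boldsymbol p_h, \nabla_h v_h)_{\mathcal T_h}$ and a boundary part $\langle \hat{\boldsymbol p}_h \cdot \boldsymbol n_K, v_h\rangle_{\partial \mathcal T_h}$. The volume part is handled directly: $\|\boldsymbol p_h\|\,\|\nabla_h v_h\| \lesssim \|\tilde{\boldsymbol p}_h\|_{0,\rho,h}\|v_h\|_{1,\rho,h}$, using boundedness of $\alpha$ to pass between $\|\boldsymbol p_h\|$ and $(c\boldsymbol p_h, \boldsymbol p_h)^{1/2}$. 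For the boundary part, since $\hat{\boldsymbol p}_h = \hat p_h \boldsymbol n_e$ is single-valued on each edge, the element-wise sum collapses via \eqref{eq:7} to $\langle \hat p_h, [v_h]\rangle_{\mathcal E_h}$, and since $\hat p_h \in \widehat Q(e)$ we may replace $[v_h]$ by its projection $\hat Q_e([v_h])$. A weighted Cauchy--Schwarz with weights $h_e^{\pm 1/2}$ then gives a product of $\bigl(\sum_e h_e \|\hat p_h\|_{0,e}^2\bigr)^{1/2}$ and $\bigl(\sum_e h_e^{-1}\|\hat Q_e([v_h])\|_{0,e}^2\bigr)^{1/2}$; the second factor is at most $\rho^{1/2}\|v_h\|_{1,\rho,h}$ by definition of the norm.

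For the first factor I would decompose $\hat p_h = (\hat p_h - \boldsymbol p_h \cdot \boldsymbol n_e) + \boldsymbol p_h \cdot \boldsymbol n_e$. Using the shape regularity relation $h_e \sim h_K$, the first piece is controlled by the stabilization contribution to $\|\tilde{\boldsymbol p}_h\|_{0,\rho,h}^2$ and contributes $\rho^{-1}\|\tilde{\boldsymbol p}_h\|_{0,\rho,h}^2$; the second piece is controlled, via the standard trace--inverse inequality $h_K\|\boldsymbol p_h\|_{0,\partial K}^2 \lesssim \|\boldsymbol p_h\|_{0,K}^2$, by $\|\boldsymbol p_h\|^2 \lesssim (c\boldsymbol p_h, \boldsymbol p_h) \leq \|\tilde{\boldsymbol p}_h\|_{0,\rho,h}^2$. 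Since $\rho \leq 1$, the two contributions combine into $\lesssim \rho^{-1}\|\tilde{\boldsymbol p}_h\|_{0,\rho,h}^2$; on taking square roots, the factors $\rho^{-1/2}$ and $\rho^{1/2}$ cancel and the desired $\rho$-independent bound for $b_w$ follows. The main obstacle is precisely this cancellation: the split of $\hat p_h$ must be arranged so that the trace--inverse contribution (which does not see $\rho$) and the stabilization contribution (whose $\rho$-scaling matches the scaling on the $v_h$ side) combine to cancel the $\rho$-weights exactly, yielding a constant $C_w$ that is independent of both $h$ and $\rho$.
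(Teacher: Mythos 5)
Your proof is correct, and since the paper defers the proof of this theorem entirely to the cited references \cite{hong2017uniformly, hong2018extended}, there is no in-paper argument to compare against; your route is the standard one. In particular, the key step — writing the boundary term as $\langle \hat p_h, \hat Q_e([v_h])\rangle_{\mathcal E_h}$ via the single-valuedness of $\hat{\boldsymbol p}_h$ and identity \eqref{eq:7}, splitting $\hat p_h$ into $(\hat p_h - \boldsymbol p_h\cdot\boldsymbol n_e)$ plus $\boldsymbol p_h\cdot\boldsymbol n_e$, and letting the $\rho^{-1/2}$ from the stabilization side cancel the $\rho^{1/2}$ from the $\|v_h\|_{1,\rho,h}$ side — is exactly how the uniformity in $\rho$ is obtained.
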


\begin{theorem}[\cite{hong2017uniformly, hong2018extended}] \label{inf-sup:bw}
Assume that $\nabla_h V_h\subset \boldsymbol Q_h$, then for any
$0<\rho\leq 1$, we have 
\begin{equation} \label{equ:WG-infsup-grad}
 \inf_{v_h\in V_h} \sup_{\tilde{\boldsymbol p}_h \in
\widetilde{\boldsymbol Q}_h} \frac{b_w(\tilde{\boldsymbol
p}_h,v_h)}{\|v_h\|_{1,\rho,h} \|\tilde{\boldsymbol p}_h\|_{0,\rho,h}}
\geq \beta_{w,0},
\end{equation}
where $\beta_{w,0}$ is independent of both mesh size $h$ and $\rho$.
\end{theorem}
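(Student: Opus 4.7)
The plan is to prove the inf-sup condition by a Fortin-style construction: given any $v_h \in V_h$, I will exhibit a specific $\tilde{\vect{p}}_h = (\vect{p}_h, \hat{\vect{p}}_h) \in \widetilde{\vect{Q}}_h$ such that $b_w(\tilde{\vect{p}}_h, v_h) = \|v_h\|_{1,\rho,h}^2$ while $\|\tilde{\vect{p}}_h\|_{0,\rho,h} \lesssim \|v_h\|_{1,\rho,h}$, with implicit constants independent of both $h$ and $\rho$. The ratio then yields the constant $\beta_{w,0}$.

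The natural construction, matching the two parts of $\|v_h\|_{1,\rho,h}^2$ to the two terms of $b_w$, is $\vect{p}_h := \nabla_h v_h$ (allowed since $\nabla_h V_h \subset \vect{Q}_h$ by hypothesis) together with $\hat{\vect{p}}_h := \hat{p}_h \vect{n}_e$ where $\hat{p}_h := -\rho^{-1} h_e^{-1} \hat{Q}_e([v_h])$. To evaluate $b_w$, I first rewrite the boundary term using the identity $\langle v_h, \hat{\vect{p}}_h \cdot \vect{n}\rangle_{\partial \mathcal{T}_h} = \langle \hat{p}_h, [v_h]\rangle_{\mathcal{E}_h}$, which follows from \eqref{eq:7} applied to $\hat{\vect{p}}_h$ (single-valued, so $[\hat{\vect{p}}_h] = 0$) combined with $\{\hat{\vect{p}}_h\} \cdot \llbracket v_h\rrbracket = \hat{p}_h [v_h]$. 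Then using the $L^2$-projection property $\langle \hat{p}_h, [v_h]\rangle_e = \langle \hat{p}_h, \hat{Q}_e([v_h])\rangle_e$ since $\hat{p}_h \in \widehat{Q}(e)$, I get exactly $b_w(\tilde{\vect{p}}_h, v_h) = \|\nabla_h v_h\|^2 + \rho^{-1}\sum_e h_e^{-1}\|\hat{Q}_e([v_h])\|_{0,e}^2 = \|v_h\|_{1,\rho,h}^2$.

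For the upper bound on $\|\tilde{\vect{p}}_h\|_{0,\rho,h}^2$, the volume term $(c\vect{p}_h, \vect{p}_h)$ is bounded by $\|c\|_\infty \|\nabla_h v_h\|^2$ by the boundedness of $c$. The stabilization term I split via the triangle inequality into a $\vect{p}_h$-piece, controlled by the standard polynomial trace-inverse estimate $h_K \|\nabla_h v_h \cdot \vect{n}_K\|_{0,\partial K}^2 \lesssim \|\nabla_h v_h\|_{0,K}^2$ together with $\rho \le 1$, and a $\hat{\vect{p}}_h$-piece, which under shape regularity ($h_K \sim h_e$, finite overlap) reduces to $\rho \sum_e h_e \|\hat{p}_h\|_{0,e}^2$. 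The crucial cancellation is that substituting $\hat{p}_h = -\rho^{-1} h_e^{-1} \hat{Q}_e([v_h])$ turns this into $\rho^{-1}\sum_e h_e^{-1}\|\hat{Q}_e([v_h])\|_{0,e}^2$, matching the jump seminorm part of $\|v_h\|_{1,\rho,h}^2$ exactly.

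The main subtlety, and indeed the reason for the specific weights $h_K$ in the stabilization and $h_e^{-1}$ in the jump norm \eqref{WG:grad:norm:v}, is precisely this $\rho$-balanced choice of $\hat{p}_h$: any stronger scaling would waste stabilization mass, any weaker scaling would fail to recover the full jump contribution to $\|v_h\|_{1,\rho,h}$. Once this choice is identified, the remaining work is routine and relies only on the $L^2$-projection property, shape regularity, boundedness of $c$, and a polynomial trace-inverse inequality. The hypothesis $\rho \le 1$ is used only to absorb $\rho\|\nabla_h v_h\|^2$ into $\|\nabla_h v_h\|^2$ at the end.
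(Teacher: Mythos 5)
Your construction is correct, and since the paper itself gives no proof of Theorem \ref{inf-sup:bw} (it defers the details to \cite{hong2017uniformly, hong2018extended}), I can only assess it on its own terms: the choice $\boldsymbol p_h = \nabla_h v_h$ (legitimate under the hypothesis $\nabla_h V_h \subset \boldsymbol Q_h$) together with $\hat{\boldsymbol p}_h = -\rho^{-1}h_e^{-1}\hat Q_e([v_h])\,\boldsymbol n_e$ is exactly the test function that makes the two halves of $b_w$ reproduce the two halves of $\|v_h\|_{1,\rho,h}^2$, and this is the standard Fortin-type argument one expects behind a uniform inf-sup statement of this kind. The two steps that carry the weight both check out: (i) the reduction $\langle v_h,\hat{\boldsymbol p}_h\cdot\boldsymbol n\rangle_{\partial\mathcal T_h}=\langle\hat p_h,[v_h]\rangle_{\mathcal E_h}$ via \eqref{eq:7} and $[\hat{\boldsymbol p}_h]=0$, followed by the projection property $\langle\hat p_h,[v_h]\rangle_e=\langle\hat p_h,\hat Q_e([v_h])\rangle_e$ (valid because $\hat p_h\in\widehat Q(e)$ by construction), giving $b_w(\tilde{\boldsymbol p}_h,v_h)=\|v_h\|_{1,\rho,h}^2$ exactly; and (ii) the norm bound, where the trace-inverse inequality plus $\rho\le 1$ controls the $\nabla_h v_h$ contribution to the stabilization term, and the $\rho^{-1}h_e^{-1}$ scaling of $\hat p_h$ converts the weight $\rho h_K$ into precisely $\rho^{-1}h_e^{-1}$ so that the $\hat{\boldsymbol p}_h$ contribution is bounded by the jump part of $\|v_h\|_{1,\rho,h}^2$ (using $h_K\sim h_e$ and finite edge overlap). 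The resulting constant depends only on shape regularity and the bounds on $c$, hence is independent of $h$ and $\rho$, as required. I see no gap.
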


\begin{corollary}[\cite{hong2017uniformly, hong2018extended}] \label{uniform_Stable_WG_Hong}
Assume that $\nabla_h V_h\subset \boldsymbol Q_h$. Then there exists
a unique solution $(\tilde{\boldsymbol p}_h, u_h)\in \widetilde
{\boldsymbol Q}_h\times V_h$ satisfying \eqref{Stabilizedpweak_WG}
with $\eta=\rho h_K$. Further, for any $0<\rho\leq 1$ the following
estimate holds
\begin{equation} \label{equ:WG-well-grad}
\|\tilde{\boldsymbol p}_h\|_{0,\rho,h} + \|u_h\|_{1,\rho,h}\leq
C_{w,1} \|f\|_{-1,\rho,h},
\end{equation}
where $C_{w,1}$ is a uniform constant with respect to both $\rho$ and
$h$ and $ \|f\|_{-1,\rho,h} := \sup\limits_{v_h \in V_h} \frac{(f,
v_h)_{\mathcal T_h}}{\|v_h\|_{1,\rho,h}}$.
\end{corollary}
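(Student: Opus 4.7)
The plan is to view the system \eqref{Stabilizedpweak_WG} as a discrete mixed problem and apply the standard Brezzi theory. Theorem \ref{bounded:coercivity:aw} supplies boundedness of $a_w$ and $b_w$ together with coercivity of $a_w$ on the entire space $\widetilde{\boldsymbol Q}_h$ (hence in particular on $\ker b_w$), while Theorem \ref{inf-sup:bw} provides the inf-sup condition for $b_w$. All three estimates have constants independent of $h$ and of $\rho\in(0,1]$, so the abstract saddle-point theory will deliver a well-posedness bound whose constant inherits this same uniformity. Since $\widetilde{\boldsymbol Q}_h\times V_h$ is finite dimensional, existence and uniqueness are equivalent, so only the a priori estimate needs to be proved in detail.

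For the estimate itself I would argue directly rather than invoke Brezzi abstractly. First, take $\tilde{\boldsymbol q}_h=\tilde{\boldsymbol p}_h$ in the first equation and use the second equation to handle the $b_w$ term, obtaining $a_w(\tilde{\boldsymbol p}_h,\tilde{\boldsymbol p}_h)=(f,u_h)_{\mathcal T_h}$; then coercivity from \eqref{equ:WG-bound-grad} together with the definition of $\|f\|_{-1,\rho,h}$ yields $\|\tilde{\boldsymbol p}_h\|_{0,\rho,h}^{2}\le\|f\|_{-1,\rho,h}\|u_h\|_{1,\rho,h}$. Second, apply the inf-sup condition \eqref{equ:WG-infsup-grad} at $v_h=u_h$; using the first equation of \eqref{Stabilizedpweak_WG} to replace $b_w(\tilde{\boldsymbol q}_h,u_h)$ by $-a_w(\tilde{\boldsymbol p}_h,\tilde{\boldsymbol q}_h)$ and then applying boundedness of $a_w$ gives $\beta_{w,0}\|u_h\|_{1,\rho,h}\le\|\tilde{\boldsymbol p}_h\|_{0,\rho,h}$. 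Combining the two inequalities produces \eqref{equ:WG-well-grad} with a constant of the form $C_{w,1}=\beta_{w,0}^{-1}+\beta_{w,0}^{-2}$, which is uniform in $h$ and in $\rho\in(0,1]$ by Theorem \ref{inf-sup:bw}.

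Uniqueness follows at once by applying this bound to the homogeneous right-hand side, and existence is then inherited from finite dimensionality. The main obstacle in this line of reasoning does not lie in the corollary itself---which is an essentially mechanical assembly of standard saddle-point arguments---but rather in Theorem \ref{inf-sup:bw}: constructing, under the parameter-dependent norms \eqref{WG:grad:norm:v}, a Fortin-type operator so that the inf-sup constant $\beta_{w,0}$ is independent of $\rho$ as well as of $h$. Once that gradient-based inf-sup is in hand, the corollary follows essentially automatically.
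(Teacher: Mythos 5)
Your proposal is correct and follows exactly the route the paper intends: the corollary is stated as a direct consequence of the boundedness/coercivity estimates of Theorem \ref{bounded:coercivity:aw} and the uniform inf-sup condition of Theorem \ref{inf-sup:bw} via standard Brezzi saddle-point theory, and your hands-on two-step derivation (testing with $\tilde{\boldsymbol q}_h=\tilde{\boldsymbol p}_h$ to bound $\|\tilde{\boldsymbol p}_h\|_{0,\rho,h}^2$ by $\|f\|_{-1,\rho,h}\|u_h\|_{1,\rho,h}$, then using the inf-sup condition together with the first equation and boundedness of $a_w$ to get $\beta_{w,0}\|u_h\|_{1,\rho,h}\le\|\tilde{\boldsymbol p}_h\|_{0,\rho,h}$) assembles these ingredients correctly, with uniqueness and existence following from the homogeneous estimate and finite dimensionality. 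You also rightly identify that the genuine content lies in the $\rho$-uniform inf-sup constant $\beta_{w,0}$ of Theorem \ref{inf-sup:bw}, not in the corollary itself.
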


\begin{theorem}[\cite{hong2017uniformly}] \label{Error:uniform_WG:Primal}
Let $(\boldsymbol p,  u)$ be the solution of \eqref{H1} and assume
that $\boldsymbol p\in \boldsymbol H^1(\Omega)$. Further, let
$(\tilde{\boldsymbol p}_h, u_h)\in \widetilde{\boldsymbol Q}_h\times
{V}_h$ be the solution of \eqref{Stabilizedpweak_WG} with $\eta=\rho
h_K$. If we choose the spaces $\widetilde {\boldsymbol Q}_h\times {V}_h$
such that $\nabla_h V_h\subset \boldsymbol Q_h$, then for any
$0<\rho\leq 1$ the following estimate holds 
\begin{equation} 
\|\boldsymbol p-\tilde{\boldsymbol p}_h\|_{0,\rho,h} +
\|u-u_h\|_{1,\rho,h} \leq C_{e,1}
\inf\limits_{\tilde{\boldsymbol q}_h \in \widetilde{\boldsymbol Q}_h,
v_h\in V_h} \left( \|\boldsymbol p-\tilde{\boldsymbol q}_h\|_{0,
\rho,h} + \|u-v_h\|_{1,\rho,h} \right),
\end{equation}
where $C_{e,1} $ is a uniform constant with respect to both $\rho$ and
$h$.
\end{theorem}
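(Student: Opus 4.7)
The argument will follow the classical Brezzi-theory pattern for quasi-optimality of a mixed saddle-point discretization, assembling the four tools already at hand: the consistency in Lemma \ref{Error:common:WG}, the boundedness/coercivity estimates in Theorem \ref{bounded:coercivity:aw}, the inf-sup condition in Theorem \ref{inf-sup:bw}, and the uniform discrete stability in Corollary \ref{uniform_Stable_WG_Hong}. The key point, beyond just reproducing Brezzi's abstract estimate, is that every constant that appears must be uniform in $\rho \in (0,1]$, and this uniformity has already been built into the cited results.

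First I would subtract the consistency identities \eqref{Stabilizedpweak_WG:consistence} from the discrete system \eqref{Stabilizedpweak_WG} to obtain the Galerkin orthogonality
\begin{equation*}
a_w(\boldsymbol p - \tilde{\boldsymbol p}_h, \tilde{\boldsymbol q}_h) + b_w(u - u_h, \tilde{\boldsymbol q}_h) = 0, \qquad b_w(\boldsymbol p - \tilde{\boldsymbol p}_h, v_h) = 0,
\end{equation*}
for every $(\tilde{\boldsymbol q}_h, v_h) \in \widetilde{\boldsymbol Q}_h \times V_h$. Then I pick an arbitrary approximating pair $(\tilde{\boldsymbol r}_h, w_h) \in \widetilde{\boldsymbol Q}_h \times V_h$ and split $\boldsymbol p - \tilde{\boldsymbol p}_h = (\boldsymbol p - \tilde{\boldsymbol r}_h) + (\tilde{\boldsymbol r}_h - \tilde{\boldsymbol p}_h)$ and similarly for $u - u_h$. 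By the triangle inequality it suffices to bound the purely discrete residuals $\tilde{\boldsymbol \phi}_h := \tilde{\boldsymbol p}_h - \tilde{\boldsymbol r}_h$ and $\psi_h := u_h - w_h$ in the norms $\|\cdot\|_{0,\rho,h}$ and $\|\cdot\|_{1,\rho,h}$, respectively, by the approximation errors on the right-hand side.

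Using Galerkin orthogonality, the pair $(\tilde{\boldsymbol \phi}_h, \psi_h)$ satisfies the perturbed discrete mixed system
\begin{equation*}
\begin{aligned}
a_w(\tilde{\boldsymbol \phi}_h, \tilde{\boldsymbol q}_h) + b_w(\psi_h, \tilde{\boldsymbol q}_h) &= a_w(\boldsymbol p - \tilde{\boldsymbol r}_h, \tilde{\boldsymbol q}_h) + b_w(u - w_h, \tilde{\boldsymbol q}_h), \\
b_w(\tilde{\boldsymbol \phi}_h, v_h) &= b_w(\boldsymbol p - \tilde{\boldsymbol r}_h, v_h).
\end{aligned}
\end{equation*}
The uniform well-posedness from Corollary \ref{uniform_Stable_WG_Hong} (whose proof uses exactly the coercivity and inf-sup ingredients of Theorems \ref{bounded:coercivity:aw}--\ref{inf-sup:bw}) controls the $(\|\cdot\|_{0,\rho,h},\|\cdot\|_{1,\rho,h})$-norm of $(\tilde{\boldsymbol \phi}_h, \psi_h)$ by the dual norms of the two right-hand functionals, with constants independent of $h$ and $\rho$. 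Applying the boundedness estimates \eqref{equ:WG-bound-grad} converts those dual norms into $\|\boldsymbol p - \tilde{\boldsymbol r}_h\|_{0,\rho,h} + \|u - w_h\|_{1,\rho,h}$. A final triangle inequality, followed by an infimum over $(\tilde{\boldsymbol r}_h, w_h)$, delivers the claimed quasi-optimality.

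The hard part is not the algebraic skeleton, which is entirely standard once Brezzi's framework is invoked, but rather the verification that the boundedness constant for $b_w$ in the right-hand side estimate does not secretly depend on $\rho$. Because the norm $\|\cdot\|_{0,\rho,h}$ penalizes $(\boldsymbol p_h - \hat{\boldsymbol p}_h)\cdot\boldsymbol n_K$ with weight $\rho h_K$ while $\|\cdot\|_{1,\rho,h}$ weights $[v_h]$ with $\rho^{-1} h_e^{-1}$, the estimate $b_w(\tilde{\boldsymbol p}_h, v_h) \leq C_w \|\tilde{\boldsymbol p}_h\|_{0,\rho,h} \|v_h\|_{1,\rho,h}$ from Theorem \ref{bounded:coercivity:aw} is the precise balance that makes the bound $\rho$-uniform; this is where the apparently asymmetric scaling of the two norms pays off. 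With that observation black-boxed into the cited theorems, the rest of the proof reduces to a bookkeeping exercise.
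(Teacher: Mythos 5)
The paper does not actually prove Theorem \ref{Error:uniform_WG:Primal} in this document (it is imported from \cite{hong2017uniformly}), but the text explicitly attributes the quasi-optimality results to the consistency of Lemma \ref{Error:common:WG} together with the uniform boundedness, coercivity and inf-sup conditions of Theorems \ref{bounded:coercivity:aw} and \ref{inf-sup:bw}, which is precisely the Brezzi-type argument you outline; your proposal is the intended route and is correct. The only point you gloss over is that the boundedness estimates \eqref{equ:WG-bound-grad} are stated for discrete arguments, so applying them to $\boldsymbol p-\tilde{\boldsymbol r}_h$ and $u-w_h$ requires the (standard, but worth stating) observation that the norms and bounds extend to $\boldsymbol H^1$ functions whose jumps vanish and whose numerical traces are taken as actual traces, so that the stabilization contribution of the exact solution drops out.
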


\begin{corollary}[\cite{hong2017uniformly}]\label{WG-order:grad}
Let $(\boldsymbol p, u)$ be the solution of \eqref{H1} and
$\boldsymbol p\in \boldsymbol H^{k+1}(\Omega), u\in H^{k+2}(\Omega)$,
and $(\tilde{\boldsymbol p}_h, u_h)\in \widetilde{\boldsymbol
Q}_h \times V_h$ be the solution of \eqref{Stabilizedpweak_WG} with
$\eta=\rho h_K$. If we choose the spaces ${V}_h\times\boldsymbol
{Q}_h\times\widehat Q_h$ as ${V}_h^{k+1}\times\boldsymbol
{Q}_h^k\times\widehat Q_h^k$, then for any $0<\rho\leq 1$ the
following estimate holds
\begin{equation}
\|\boldsymbol p-\tilde{\boldsymbol p}_h\|_{0,\rho,h} +
\|u-{u}_h\|_{1,\rho,h} \leq C_{r,1} h^{k+1} (\|\boldsymbol
p\|_{k+1}+\|u\|_{k+2}),
\end{equation}
where $ C_{r,1}$ is independent of both $h$ and $\rho$.
\end{corollary}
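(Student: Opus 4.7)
The plan is to obtain the rate by invoking the quasi-best approximation estimate of Theorem~\ref{Error:uniform_WG:Primal} and then choosing specific interpolants so that every term in the parameter-dependent norms admits a clean $O(h^{k+1})$ bound that is uniform in $\rho \in (0,1]$. By Theorem~\ref{Error:uniform_WG:Primal}, it suffices to bound the infimum
$$
\inf_{\tilde{\boldsymbol q}_h \in \widetilde{\boldsymbol Q}_h,\; v_h \in V_h^{k+1}} \bigl(\|\boldsymbol p - \tilde{\boldsymbol q}_h\|_{0,\rho,h} + \|u - v_h\|_{1,\rho,h}\bigr)
$$
by a particular choice of $(\tilde{\boldsymbol q}_h, v_h)$, after which the result follows from scaled polynomial approximation and trace estimates.

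For the flux variables I would select $\boldsymbol q_h$ as the elementwise $L^2$ projection of $\boldsymbol p$ onto $\boldsymbol Q_h^k$, and set $\hat{\boldsymbol q}_h = \hat Q_e(\boldsymbol p \cdot \boldsymbol n_e)\,\boldsymbol n_e$ on each edge $e$, so that $\hat{\boldsymbol q}_h \in \widehat{\boldsymbol Q}_h$ with $\widehat Q(e) = \mathcal{P}_k(e)$. For the scalar variable I would take $v_h$ to be a Scott--Zhang type interpolant of $u$ into the conforming subspace $V_h^{k+1} \cap H_0^1(\Omega) \subset V_h^{k+1}$. Standard approximation theory then yields $\|c^{1/2}(\boldsymbol p - \boldsymbol q_h)\|_{0,\mathcal T_h} \lesssim h^{k+1}\|\boldsymbol p\|_{k+1}$ and $\|\nabla_h(u - v_h)\| \lesssim h^{k+1}\|u\|_{k+2}$, which take care of the two volumetric contributions.

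The boundary contribution to $\|\boldsymbol p - \tilde{\boldsymbol q}_h\|_{0,\rho,h}^2$ is $\rho\sum_K h_K \|((\boldsymbol p - \boldsymbol q_h) - (\boldsymbol p\cdot\boldsymbol n_e - \hat q_h)\boldsymbol n_e)\cdot \boldsymbol n_K\|_{\partial K}^2$; splitting via the triangle inequality, each piece is controlled by the scaled trace inequality $\|w\|_{\partial K}^2 \lesssim h_K^{-1}\|w\|_{0,K}^2 + h_K|w|_{1,K}^2$ applied to $\boldsymbol p - \boldsymbol q_h$ together with standard edge-projection estimates for $\boldsymbol p\cdot \boldsymbol n_e - \hat q_h$, giving an $h^{2(k+1)}\|\boldsymbol p\|_{k+1}^2$ bound; since $\rho \leq 1$, this is uniform in $\rho$.

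The main obstacle, and the step I would handle most carefully, is the treatment of the jump term $\rho^{-1}\sum_e h_e^{-1}\|\hat Q_e([u-v_h])\|_{0,e}^2$, which otherwise blows up as $\rho \to 0$. This is precisely why $v_h$ must be chosen to be globally continuous and to vanish on $\partial\Omega$: because $u \in H_0^1(\Omega)$, we have $[u] = 0$ on $\mathcal{E}_h^i$ and $u = 0$ on $\mathcal{E}_h^\partial$, and because $v_h$ is continuous with $v_h|_{\partial\Omega} = 0$, we have $[v_h] = 0$ on $\mathcal{E}_h$ as well. Hence $\hat Q_e([u - v_h]) = 0$ identically on every edge, and the $\rho^{-1}$ term contributes nothing. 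Combining the four estimates with Theorem~\ref{Error:uniform_WG:Primal} yields the desired bound with $C_{r,1}$ independent of $h$ and $\rho$.
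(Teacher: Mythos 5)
Your proposal is correct and follows the route the paper intends: the corollary is stated as a consequence of the quasi-optimality estimate in Theorem~\ref{Error:uniform_WG:Primal}, and you bound the best-approximation error by an explicit choice of interpolants, with the essential observation that taking $v_h$ in the conforming subspace $V_h^{k+1}\cap H_0^1(\Omega)$ (e.g.\ Scott--Zhang) makes $\hat Q_e([u-v_h])$ vanish identically, so the $\rho^{-1}$-weighted term contributes nothing and the bound is uniform in $\rho\in(0,1]$. The paper defers the detailed argument to the cited reference, but your treatment of the flux terms (elementwise $L^2$ projection plus edge projection, scaled trace inequality, and $\rho\le 1$) is exactly the standard completion of that argument.
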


The above three theorems improve the results of \cite{wang2014weak}, where
the inf-sup condition for a given constant $\rho$ was proven.

\subsection{Divergence-based uniform inf-sup condition}

Next, we shall show the well-posedness of WG methods under another
pair of the parameter-dependent norms.  We choose the parameter $\eta$
as $\eta =\rho^{-1} h^{-1}_K$ in \eqref{WG_aform} and define the norms as
follows
\begin{equation}
\begin{aligned}
\|\tilde{\boldsymbol p}_h\|^2_{\widetilde{\rm div},\rho,h}
& =(c \boldsymbol{p}_h, \boldsymbol{p}_h)_{\mathcal{T}_h} +
({\rm div} \boldsymbol{p}_h, {\rm
div}\boldsymbol{p}_h)_{\mathcal{T}_h} + \rho^{-1} \sum_{K\in
\mathcal{T}_h} h^{-1}_K\langle(\boldsymbol{p}_h-\hat{\boldsymbol
p}_h)\cdot \boldsymbol{n}_K, (\boldsymbol{p}_h-\hat{\boldsymbol
p}_h)\cdot \boldsymbol{n}_K\rangle_{\partial K}, \\
\|u_h\|^2 &= (u_h,u_h)_{\mathcal T_h}.
\end{aligned}
\end{equation}
We have the uniform inf-sup condition for the following formulation 
\begin{equation}
A_w((\tilde{\boldsymbol p}_h, u_h),(\tilde{\boldsymbol q}_h,
v_h)) = a_w(\tilde{\boldsymbol p}_h, \tilde{\boldsymbol q}_h) +
b_w(\tilde{\boldsymbol q}_h,u_h) + b_w(\tilde{\boldsymbol p}_h,v_h).
\end{equation}

\begin{theorem}[\cite{hong2017uniformly, hong2018extended}] \label{uniform:WG:2}
Let $\boldsymbol R_h\subset \boldsymbol H({\rm div}, \Omega)\cap
\boldsymbol Q_h$ be the Raviart-Thomas finite element space. Assume
that $\{\!\!\{\boldsymbol R_h\}\!\!\}\subset \widehat Q_h$ and
$V_h = {\rm div}_h \boldsymbol Q_h$. Then, we have 
\begin{equation} \label{equ:WG-infsup-div}
 \inf_{(\tilde{\boldsymbol p}_h, u_h)\in \widetilde{\boldsymbol Q}_h\times V_h} 
 \sup_{(\tilde{\boldsymbol q}_h, v_h)\in \widetilde{\boldsymbol
Q}_h\times V_h}   \frac{A_w((\tilde{\boldsymbol p}_h,
u_h),(\tilde{\boldsymbol q}_h, v_h))}{(\|u_h\|+\|\tilde{\boldsymbol
p}_h\|_{\widetilde {\rm div},\rho,h})(\|v_h\|+\|\tilde{\boldsymbol
  q}_h\|_{\widetilde{\rm div},\rho,h})} \geq \beta_{w,1},
\end{equation}
where $\beta_{w,1}$ is independent of both $\rho$ and mesh size $h$.
\end{theorem}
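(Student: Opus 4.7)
I will use a Fortin-style construction tailored to the saddle-point form $A_w$: for a given $(\tilde{\boldsymbol p}_h,u_h)\in \widetilde{\boldsymbol Q}_h\times V_h$, I build an explicit test pair
$$(\tilde{\boldsymbol q}_h,v_h)=\bigl(\tilde{\boldsymbol p}_h-\delta_1\tilde{\boldsymbol p}^*_h,\,-u_h-\delta_2\,{\rm div}_h\boldsymbol p_h\bigr),$$
with small positive parameters $\delta_1,\delta_2$ to be fixed at the end. Here $\tilde{\boldsymbol p}^*_h=(\boldsymbol p^*_h,\hat{\boldsymbol p}^*_h)$ is an $\boldsymbol H({\rm div})$-conforming Fortin lifting of $u_h$ (constructed below). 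The aim is that $A_w((\tilde{\boldsymbol p}_h,u_h),(\tilde{\boldsymbol q}_h,v_h))$ dominates $\|\tilde{\boldsymbol p}_h\|^2_{\widetilde{\rm div},\rho,h}+\|u_h\|^2$ while $\|\tilde{\boldsymbol q}_h\|_{\widetilde{\rm div},\rho,h}+\|v_h\|$ is controlled by $\|\tilde{\boldsymbol p}_h\|_{\widetilde{\rm div},\rho,h}+\|u_h\|$, both uniformly in $h$ and in $\rho\in(0,1]$.

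With $\eta=\rho^{-1}h_K^{-1}$, testing the diagonal pair $(\tilde{\boldsymbol p}_h,-u_h)$ makes the two $b_w$-terms in $A_w$ cancel, and the definition of $a_w$ gives directly $a_w(\tilde{\boldsymbol p}_h,\tilde{\boldsymbol p}_h)=(c\boldsymbol p_h,\boldsymbol p_h)+\rho^{-1}\sum_K h_K^{-1}\|(\boldsymbol p_h-\hat{\boldsymbol p}_h)\cdot\boldsymbol n_K\|^2_{\partial K}$, which are two of the three parts of $\|\tilde{\boldsymbol p}_h\|^2_{\widetilde{\rm div},\rho,h}$. The remaining $\|{\rm div}_h\boldsymbol p_h\|^2$-piece is produced by the sub-test $-\delta_2\,{\rm div}_h\boldsymbol p_h$, which is an admissible element of $V_h={\rm div}_h\boldsymbol Q_h$; elementwise integration by parts yields
$$-\delta_2\,b_w(\tilde{\boldsymbol p}_h,{\rm div}_h\boldsymbol p_h)=\delta_2\|{\rm div}_h\boldsymbol p_h\|^2-\delta_2\langle(\boldsymbol p_h-\hat{\boldsymbol p}_h)\cdot\boldsymbol n_K,{\rm div}_h\boldsymbol p_h\rangle_{\partial\mathcal T_h},$$
whose boundary remainder is absorbed via the inverse trace estimate $\|{\rm div}_h\boldsymbol p_h\|_{0,\partial K}\lesssim h_K^{-1/2}\|{\rm div}_h\boldsymbol p_h\|_{0,K}$ and Young's inequality. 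Because $\rho\le 1$, the stabilization weight $\rho^{-1}h_K^{-1}$ is strong enough to swallow this cross term uniformly.

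The heart of the proof is the Fortin lifting that recovers $\|u_h\|^2$. Using the standard surjectivity of ${\rm div}:\boldsymbol R_h\to V_h$ (compatible with the hypothesis $V_h={\rm div}_h\boldsymbol Q_h$), I pick $\boldsymbol p^*_h\in\boldsymbol R_h$ with ${\rm div}\boldsymbol p^*_h=u_h$ and $\|\boldsymbol p^*_h\|\lesssim\|u_h\|$, and set $\hat{\boldsymbol p}^*_h|_e=(\boldsymbol p^*_h\cdot\boldsymbol n_e)\boldsymbol n_e$. The assumption $\{\!\!\{\boldsymbol R_h\}\!\!\}\subset\widehat Q_h$ together with $\boldsymbol R_h\subset\boldsymbol H({\rm div},\Omega)$ ensures at once that $\hat{\boldsymbol p}^*_h\in\widehat{\boldsymbol Q}_h$ and that $(\boldsymbol p^*_h-\hat{\boldsymbol p}^*_h)\cdot\boldsymbol n_K\equiv 0$ on every element boundary. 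Hence the entire stabilization component of $\|\tilde{\boldsymbol p}^*_h\|_{\widetilde{\rm div},\rho,h}$ vanishes, giving the key $\rho$-independent bound $\|\tilde{\boldsymbol p}^*_h\|_{\widetilde{\rm div},\rho,h}\lesssim\|u_h\|$; a second integration by parts gives $b_w(\tilde{\boldsymbol p}^*_h,u_h)=-\|u_h\|^2$, and the stabilization cross term in $a_w(\tilde{\boldsymbol p}_h,\tilde{\boldsymbol p}^*_h)$ also vanishes, reducing that cross term to $(c\boldsymbol p_h,\boldsymbol p^*_h)$, which Young's inequality absorbs into the controlled $(c\boldsymbol p_h,\boldsymbol p_h)$-term.

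Assembling, I fix $\delta_2$ small enough to absorb the Step-2 boundary remainder by a fraction of the stabilization and of $\|{\rm div}_h\boldsymbol p_h\|^2$, and then $\delta_1$ small enough that the Young cross-term from Step~3 consumes no more than, say, half of $(c\boldsymbol p_h,\boldsymbol p_h)$. This produces $A_w((\tilde{\boldsymbol p}_h,u_h),(\tilde{\boldsymbol q}_h,v_h))\gtrsim\|\tilde{\boldsymbol p}_h\|^2_{\widetilde{\rm div},\rho,h}+\|u_h\|^2$, while by construction $\|\tilde{\boldsymbol q}_h\|_{\widetilde{\rm div},\rho,h}+\|v_h\|\lesssim\|\tilde{\boldsymbol p}_h\|_{\widetilde{\rm div},\rho,h}+\|u_h\|$, yielding \eqref{equ:WG-infsup-div} with $\beta_{w,1}$ independent of $h$ and $\rho$. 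The main obstacle is exactly this uniformity in $\rho\in(0,1]$: any stray factor of $\rho^{-1}$ on the right-hand side of the test-norm estimate would destroy the bound. The decisive remedy is that the $\boldsymbol H({\rm div})$-conforming Fortin lifting with its natural normal trace makes $(\boldsymbol p^*_h-\hat{\boldsymbol p}^*_h)\cdot\boldsymbol n_K$ vanish pointwise, eliminating every $\rho^{-1}$-factor from the cost of $\tilde{\boldsymbol p}^*_h$; a parallel mechanism, exploiting $\rho\le 1$, absorbs the Step-2 boundary remainder into the strong stabilization weight.
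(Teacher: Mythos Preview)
The paper does not actually contain a proof of this theorem; it is stated with a citation to \cite{hong2017uniformly, hong2018extended} and the details are explicitly deferred there. So there is no in-paper argument to compare against.

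That said, your proposal is correct and is precisely the standard route one would expect for such a uniform inf-sup result. The three-piece test function $(\tilde{\boldsymbol q}_h,v_h)=(\tilde{\boldsymbol p}_h-\delta_1\tilde{\boldsymbol p}^*_h,\,-u_h-\delta_2\,{\rm div}_h\boldsymbol p_h)$ is the natural construction: the diagonal piece recovers $a_w(\tilde{\boldsymbol p}_h,\tilde{\boldsymbol p}_h)$, the $-\delta_2\,{\rm div}_h\boldsymbol p_h$ piece supplies $\|{\rm div}_h\boldsymbol p_h\|^2$ (admissible since $V_h={\rm div}_h\boldsymbol Q_h$), and the conforming Fortin lifting $\tilde{\boldsymbol p}^*_h$ recovers $\|u_h\|^2$. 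Your key observation---that choosing $\hat{\boldsymbol p}^*_h$ as the normal trace of $\boldsymbol p^*_h\in\boldsymbol R_h\subset\boldsymbol H({\rm div})$ makes $(\boldsymbol p^*_h-\hat{\boldsymbol p}^*_h)\cdot\boldsymbol n_K\equiv 0$, so that $\|\tilde{\boldsymbol p}^*_h\|_{\widetilde{\rm div},\rho,h}$ carries no $\rho^{-1}$-weight---is exactly what the hypothesis $\{\!\!\{\boldsymbol R_h\}\!\!\}\subset\widehat Q_h$ is designed to enable, and it is the reason the constant is uniform in $\rho$.

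Two small points worth stating explicitly when you write this up. First, your use of the surjectivity ${\rm div}:\boldsymbol R_h\to V_h$ with the bound $\|\boldsymbol p^*_h\|\lesssim\|u_h\|$ is not literally part of the hypotheses; it follows from the standard Raviart--Thomas inf-sup property together with ${\rm div}\boldsymbol R_h\subset{\rm div}_h\boldsymbol Q_h=V_h$ and the implicit matching of polynomial degrees (as in the corollary that follows the theorem). You should spell this out. Second, your absorption of the Step-2 boundary remainder relies on $\rho\le 1$; this restriction is consistent with how the result is used in the paper (all corollaries assume $0<\rho\le 1$), but it is not written in the theorem statement itself, so flag it.
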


\begin{corollary}[\cite{hong2017uniformly, hong2018extended}] \label{uniform_stable_WG}
Assume that the spaces $ \widetilde{\boldsymbol Q}_h\times V_h$
satisfy the conditions of Theorem \ref{uniform:WG:2}. Then there
exists a unique solution $(\tilde{\boldsymbol p}_h, u_h)\in
\widetilde{\boldsymbol Q}_h\times {V}_h$ satisfying
\eqref{Stabilizedpweak_WG} with $\eta=\rho^{-1}h_K^{-1}$, and for any
$0<\rho\leq 1$ the following estimate holds
\begin{equation}
\|\tilde{\boldsymbol p}_h\|_{\widetilde{\rm div},\rho,h} + \|u_h\|\leq
C_{w,2} \|f_h\|,
\end{equation}
where $C_{w,2}$ is a uniform constant with respect to both $\rho$ and
$h$.
\end{corollary}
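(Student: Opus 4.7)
The plan is to reduce Corollary~\ref{uniform_stable_WG} to the uniform inf-sup condition already established in Theorem~\ref{uniform:WG:2}. Since the discrete problem \eqref{Stabilizedpweak_WG} is a square linear system on the finite-dimensional space $\widetilde{\boldsymbol Q}_h\times V_h$, it suffices to show that the associated operator is injective and then to derive the quantitative bound; existence and uniqueness will then follow from a standard dimension-counting argument (the Banach--Ne\v{c}as--Babu\v{s}ka argument in finite dimensions).

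First I would rewrite \eqref{Stabilizedpweak_WG} in the compact combined form
\begin{equation*}
A_w((\tilde{\boldsymbol p}_h, u_h),(\tilde{\boldsymbol q}_h, v_h))
= -(f, v_h)_{\mathcal T_h}
\qquad \forall (\tilde{\boldsymbol q}_h, v_h)\in \widetilde{\boldsymbol Q}_h\times V_h,
\end{equation*}
so that the full saddle-point system is encoded by the single bilinear form $A_w$. Since $v_h\in V_h$ and $f_h = P_h f$ is the $L^2$ projection of $f$ onto $V_h$, one has $(f,v_h)_{\mathcal T_h} = (f_h,v_h)_{\mathcal T_h}$, and Cauchy--Schwarz gives the bound
\begin{equation*}
|(f,v_h)_{\mathcal T_h}| \le \|f_h\|\,\|v_h\| \le \|f_h\|\bigl(\|v_h\|+\|\tilde{\boldsymbol q}_h\|_{\widetilde{\rm div},\rho,h}\bigr).
\end{equation*}

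Next I would apply the uniform inf-sup condition \eqref{equ:WG-infsup-div} from Theorem~\ref{uniform:WG:2}. By assumption the spaces satisfy the hypotheses of that theorem, so for the solution $(\tilde{\boldsymbol p}_h, u_h)$,
\begin{equation*}
\beta_{w,1}\bigl(\|u_h\|+\|\tilde{\boldsymbol p}_h\|_{\widetilde{\rm div},\rho,h}\bigr)
\le \sup_{(\tilde{\boldsymbol q}_h, v_h)} \frac{A_w((\tilde{\boldsymbol p}_h, u_h),(\tilde{\boldsymbol q}_h, v_h))}{\|v_h\|+\|\tilde{\boldsymbol q}_h\|_{\widetilde{\rm div},\rho,h}}
= \sup_{(\tilde{\boldsymbol q}_h, v_h)} \frac{-(f,v_h)_{\mathcal T_h}}{\|v_h\|+\|\tilde{\boldsymbol q}_h\|_{\widetilde{\rm div},\rho,h}}
\le \|f_h\|,
\end{equation*}
which is exactly the desired estimate with $C_{w,2}=\beta_{w,1}^{-1}$, independent of $\rho$ and $h$.

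For existence and uniqueness, I would apply the same inf-sup inequality to the homogeneous problem $A_w((\tilde{\boldsymbol p}_h, u_h),(\tilde{\boldsymbol q}_h, v_h))=0$ for all test functions, which forces $\|u_h\|+\|\tilde{\boldsymbol p}_h\|_{\widetilde{\rm div},\rho,h}=0$, hence $u_h=0$ and $\tilde{\boldsymbol p}_h=0$. Injectivity of a square linear system on a finite-dimensional space implies bijectivity, so existence and uniqueness follow. The main obstacle in this argument is really packed into Theorem~\ref{uniform:WG:2} itself (the construction of a test pair realizing the uniform inf-sup in the divergence-based norm, via the Raviart--Thomas space and the hypothesis $\{\!\!\{\boldsymbol R_h\}\!\!\}\subset \widehat Q_h$); once that is taken as granted, the present corollary is a direct consequence.
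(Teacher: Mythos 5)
Your proposal is correct and is exactly the intended derivation: the paper states this result as a corollary of the uniform inf-sup condition in Theorem \ref{uniform:WG:2} (with the proof deferred to the cited references), and the standard route is precisely what you do — sum the two equations into the combined form $A_w(\cdot,\cdot)=-(f,v_h)=-(f_h,v_h)$, bound the right-hand side by $\|f_h\|$ via Cauchy--Schwarz and the $L^2$-projection property, and invoke the inf-sup bound to get the a priori estimate with $C_{w,2}=\beta_{w,1}^{-1}$ and, applied to the homogeneous system, injectivity and hence bijectivity of the square finite-dimensional system.
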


\begin{theorem}[\cite{hong2017uniformly}]\label{Error:uniform_WG:mixed}
Let $(\boldsymbol p, u)$ be the solution of \eqref{HDIV} and assume
that $\boldsymbol p\in \boldsymbol H^1(\Omega)$. Let
$(\tilde{\boldsymbol p}_h, u_h)\in \widetilde {\boldsymbol Q}_h\times
V_h$ be the solution of \eqref{Stabilizedpweak_WG}. If we choose the
spaces $\widetilde {\boldsymbol Q}_h\times V_h$ such that the inf-sup
condition \eqref{equ:WG-infsup-div} is satisfied, then for any
$0<\rho\leq 1$ the following estimate holds  
\begin{equation}
 \|\boldsymbol p-\tilde{\boldsymbol p}_h\|_{\widetilde{\rm div},
\rho,h} + \|u-u_h\| \leq C_{e,2} \inf\limits_{\tilde{\boldsymbol
q}_h\in \widetilde{\boldsymbol Q}_h, v_h\in V_h } \left( \|\boldsymbol
p-\tilde{\boldsymbol q}_h\|_{\widetilde{\rm div}, \rho, h} + \|u-v_h\|
\right),
\end{equation}
where $C_{e,2} $ is a uniform constant with respect to both $\rho$ and
$h$.
\end{theorem}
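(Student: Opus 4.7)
The plan is a standard Strang/C\'ea-type argument for mixed problems, built on three ingredients already on the table: Galerkin orthogonality coming from Lemma \ref{Error:common:WG}, the uniform inf-sup condition \eqref{equ:WG-infsup-div} of Theorem \ref{uniform:WG:2}, and a matching uniform continuity bound for the combined bilinear form $A_w$ in the product norm $\|\cdot\|_{\widetilde{\rm div},\rho,h}+\|\cdot\|$. Since the previous subsection already set up the first two ingredients uniformly in $\rho$, the core of the argument is to assemble them.

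First I would establish Galerkin orthogonality. Because $\boldsymbol p\in\boldsymbol H^1(\Omega)$, its traces on $\mathcal{E}_h$ are single-valued, so $\boldsymbol p$ may be identified with $\tilde{\boldsymbol p}=(\boldsymbol p,\hat{\boldsymbol p})$ by taking $\hat{\boldsymbol p}$ to be the single-valued trace of $\boldsymbol p$ on $\mathcal{E}_h$; the penalty term in $a_w(\tilde{\boldsymbol p},\cdot)$ then vanishes. Subtracting the discrete problem \eqref{Stabilizedpweak_WG} from the consistency identity \eqref{Stabilizedpweak_WG:consistence} yields, for every $(\tilde{\boldsymbol s}_h,w_h)\in\widetilde{\boldsymbol Q}_h\times V_h$,
\[
A_w\big((\tilde{\boldsymbol p}-\tilde{\boldsymbol p}_h,\,u-u_h),\,(\tilde{\boldsymbol s}_h,w_h)\big)=0.
\]

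Next, given arbitrary $\tilde{\boldsymbol q}_h\in\widetilde{\boldsymbol Q}_h$ and $v_h\in V_h$, I would apply the uniform inf-sup condition \eqref{equ:WG-infsup-div} to the discrete difference $(\tilde{\boldsymbol q}_h-\tilde{\boldsymbol p}_h,\,v_h-u_h)\in\widetilde{\boldsymbol Q}_h\times V_h$ and rewrite the resulting supremum via the Galerkin orthogonality above:
\[
A_w\big((\tilde{\boldsymbol q}_h-\tilde{\boldsymbol p}_h,\,v_h-u_h),\,(\tilde{\boldsymbol s}_h,w_h)\big)=A_w\big((\tilde{\boldsymbol q}_h-\tilde{\boldsymbol p},\,v_h-u),\,(\tilde{\boldsymbol s}_h,w_h)\big).
\]
Bounding the right-hand side by the uniform continuity estimate for $A_w$ (established in the next paragraph) and then invoking the triangle inequality to pass from $\tilde{\boldsymbol q}_h-\tilde{\boldsymbol p}_h$ to $\tilde{\boldsymbol p}-\tilde{\boldsymbol p}_h$ gives the desired quasi-optimal bound after taking the infimum over $(\tilde{\boldsymbol q}_h,v_h)$.

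The main obstacle is the uniform continuity of $A_w$, namely
\[
|A_w((\tilde{\boldsymbol r},\xi),(\tilde{\boldsymbol s},w))|\lesssim\big(\|\tilde{\boldsymbol r}\|_{\widetilde{\rm div},\rho,h}+\|\xi\|\big)\big(\|\tilde{\boldsymbol s}\|_{\widetilde{\rm div},\rho,h}+\|w\|\big),
\]
with a constant independent of $\rho\in(0,1]$ and $h$. The $a_w$ piece is immediate by Cauchy--Schwarz because the penalty weight $\eta=\rho^{-1}h_K^{-1}$ is exactly the weight built into $\|\cdot\|_{\widetilde{\rm div},\rho,h}$. The mixed term is where one has to think: after element-wise integration by parts,
\[
b_w(\tilde{\boldsymbol s},w)=-({\rm div}_h\boldsymbol s,w)_{\mathcal T_h}+\langle(\boldsymbol s-\hat{\boldsymbol s})\cdot\boldsymbol n,w\rangle_{\partial\mathcal T_h};
\]
the first piece is controlled by the divergence part of $\|\tilde{\boldsymbol s}\|_{\widetilde{\rm div},\rho,h}$, while the edge piece is handled by Cauchy--Schwarz together with the discrete trace inequality $\|w\|_{0,\partial K}^2\lesssim h_K^{-1}\|w\|_{0,K}^2$, whose $h_K$ weight exactly balances the $\rho^{-1}h_K^{-1}$ factor hidden in the penalty part of $\|\tilde{\boldsymbol s}\|_{\widetilde{\rm div},\rho,h}$ and leaves only a harmless $\sqrt{\rho}\leq 1$ prefactor. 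This matching of weights, which is the reason the norm was defined the way it was, is really the only non-bookkeeping step of the proof.
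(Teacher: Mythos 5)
Your overall skeleton --- consistency (Lemma \ref{Error:common:WG}) giving Galerkin orthogonality for $A_w$, the uniform inf-sup \eqref{equ:WG-infsup-div} applied to the discrete difference $(\tilde{\boldsymbol q}_h-\tilde{\boldsymbol p}_h,\,v_h-u_h)$, a uniform continuity bound, and a triangle inequality --- is the right architecture, and note that the paper itself defers the proof to the cited references, so there is no in-text proof to compare against. However, your key continuity claim has a genuine gap. In the expression $A_w\big((\tilde{\boldsymbol q}_h-\tilde{\boldsymbol p},\,v_h-u),(\tilde{\boldsymbol s}_h,w_h)\big)$ there are \emph{two} instances of $b_w$: the term $b_w(\tilde{\boldsymbol q}_h-\tilde{\boldsymbol p},\,w_h)$, whose scalar argument $w_h$ is discrete, and the term $b_w(\tilde{\boldsymbol s}_h,\,v_h-u)$, whose scalar argument $v_h-u$ is \emph{not} discrete. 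For the first, your argument works: the discrete trace inequality applies to $w_h$ and the $\rho^{-1}h_K^{-1}$ weight in $\|\cdot\|_{\widetilde{\rm div},\rho,h}$ balances as you describe. For the second, after integration by parts the edge contribution is $\langle(\boldsymbol s_h-\hat{\boldsymbol s}_h)\cdot\boldsymbol n,\,v_h-u\rangle_{\partial\mathcal T_h}$, and bounding $\|v_h-u\|_{0,\partial K}$ by $h_K^{-1}\|v_h-u\|_{0,K}^2$ is not legitimate: the discrete trace inequality holds only on finite-dimensional (piecewise polynomial) spaces, and $u$ is merely an $H^1$ function. Indeed the generic estimate $|b_w(\tilde{\boldsymbol s}_h,\xi)|\lesssim\|\tilde{\boldsymbol s}_h\|_{\widetilde{\rm div},\rho,h}\,\|\xi\|_{0}$ is false for non-discrete $\xi$ (take $v_h=0$; it would assert that the trace of $u$ on the skeleton is controlled by $\|u\|_{L^2}$).

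To close the gap one cannot argue with an arbitrary $v_h$; the standard repair is to take $v_h=P_hu$ (the $L^2$ projection onto $V_h$) and exploit the hypothesis $V_h={\rm div}_h\boldsymbol Q_h$ of Theorem \ref{uniform:WG:2}, so that the volume part $({\rm div}_h\boldsymbol s_h,\,P_hu-u)_{\mathcal T_h}$ vanishes by orthogonality; the remaining edge term is then estimated by the continuous trace inequality $h_K\|P_hu-u\|_{0,\partial K}^2\lesssim\|P_hu-u\|_{0,K}^2+h_K^2|P_hu-u|_{1,K}^2$, which introduces an additional $h$-weighted $H^1$ contribution of $u$. That extra contribution is of the optimal order (it is what makes Corollary \ref{WG-order:mixed} come out right), but it is not dominated by $\|u-v_h\|_{L^2}$ alone, so the clean quasi-optimality in the pure $L^2$ norm of $u-v_h$ does not follow from your argument as written. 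You should either carry this extra term explicitly (arriving at a slightly weaker, Strang-type statement) or restructure the continuity estimate so that the scalar slot of $b_w$ is only ever evaluated at discrete functions or at $u$ itself, the latter being handled through the identity $b_w(\tilde{\boldsymbol s}_h,u)=(\boldsymbol s_h,\nabla u)_{\mathcal T_h}=-(c\boldsymbol p,\boldsymbol s_h)_{\mathcal T_h}$ and the consistency relation.
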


\begin{corollary}[\cite{hong2017uniformly}] \label{WG-order:mixed}
Let $(\boldsymbol p,  u)$ be the solution of \eqref{HDIV} and assume
that $\boldsymbol p\in \boldsymbol H^{k+1}(\Omega), {\rm div}
\boldsymbol p\in H^{k+1}(\Omega), u\in H^{k+1}(\Omega)$. Let
$(\tilde{\boldsymbol p}_h, u_h)\in \widetilde{\boldsymbol Q}_h\times
V_h$ be the solution of \eqref{Stabilizedpweak_WG} with
$\eta=\rho^{-1} h^{-1}_K$. If we choose the spaces
$V_h\times\boldsymbol {Q}_h\times\widehat Q_h$ as
${V}_h^{k}\times\boldsymbol {Q}_h^{k,RT}\times\widehat Q_h^k$, then
for any $0<\rho\leq 1$ the following estimate holds
\begin{equation}
 \|\boldsymbol p-\tilde{\boldsymbol p}_h\|_{\widetilde{\rm div},
\rho,h}+\|u-{u}_h\|\leq C_{r,2} h^{k+1} (\|\boldsymbol
p\|_{k+1}+\|{\rm div}\boldsymbol p\|_{k+1}+\|u\|_{k+1}),
\end{equation}
where $ C_{r,2}$ is independent of both $h$ and $\rho$.
\end{corollary}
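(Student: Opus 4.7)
The plan is to combine the quasi-best approximation Theorem \ref{Error:uniform_WG:mixed} with carefully chosen interpolants tailored to the Raviart--Thomas space $\boldsymbol Q_h^{k,RT}$. Because Theorem \ref{Error:uniform_WG:mixed} reduces the error to an infimum over $\widetilde{\boldsymbol Q}_h \times V_h$, it suffices to exhibit one specific pair whose approximation error is bounded by the right-hand side uniformly in $\rho \in (0,1]$.

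I would take $v_h := P_h^k u$ (the $L^2$-projection onto $V_h^k$) and $\tilde{\boldsymbol q}_h := (\Pi_h^{RT}\boldsymbol p, \hat{\boldsymbol q}_h)$ with $\hat{\boldsymbol q}_h|_e := ((\Pi_h^{RT}\boldsymbol p) \cdot \boldsymbol n_e)\,\boldsymbol n_e$, where $\Pi_h^{RT}$ is the canonical Raviart--Thomas interpolation. This is admissible because the Raviart--Thomas normal trace satisfies $(\Pi_h^{RT}\boldsymbol p)\cdot \boldsymbol n_e \in \mathcal P_k(e) \subset \widehat Q_h^k$. The key structural property of the Raviart--Thomas element --- a single-valued normal trace that is a polynomial of degree $k$ on each face --- yields $(\Pi_h^{RT}\boldsymbol p - \hat{\boldsymbol q}_h)\cdot \boldsymbol n_K \equiv 0$ on $\partial K$ for every $K\in \mathcal T_h$. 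Since $\boldsymbol p\cdot \boldsymbol n$ is well-defined and single-valued on $\mathcal E_h$, the entire boundary-jump contribution of $\|\boldsymbol p - \tilde{\boldsymbol q}_h\|_{\widetilde{\rm div},\rho,h}$ vanishes, which is what we need to escape the singular weight $\rho^{-1}h_K^{-1}$ as $\rho \to 0$.

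With the jump term gone, the remaining contributions are purely volume terms, which I would bound by standard tools: the $L^2$ approximation estimate for Raviart--Thomas interpolation gives $\|\boldsymbol p - \Pi_h^{RT}\boldsymbol p\| \lesssim h^{k+1}\|\boldsymbol p\|_{k+1}$, and the classical commuting diagram ${\rm div}\,\Pi_h^{RT}\boldsymbol p = P_h^k({\rm div}\boldsymbol p)$ reduces the divergence term to the scalar projection estimate $\|{\rm div}\boldsymbol p - P_h^k({\rm div}\boldsymbol p)\| \lesssim h^{k+1}\|{\rm div}\boldsymbol p\|_{k+1}$. The scalar part is handled by the textbook bound $\|u - P_h^k u\| \lesssim h^{k+1}\|u\|_{k+1}$. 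Inserting these into Theorem \ref{Error:uniform_WG:mixed} delivers the claimed $h^{k+1}$ rate with a constant depending only on the constants $C_{e,2}$, the shape-regularity of $\mathcal T_h$, and the $L^\infty$ bound on $c$, but independent of both $h$ and $\rho$.

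The main obstacle --- more accurately, the crucial design decision --- is the matching of $\hat{\boldsymbol q}_h$ with the normal trace of $\Pi_h^{RT}\boldsymbol p$, which is what makes the weighted jump term $\rho^{-1}h_K^{-1}\|(\boldsymbol q_h - \hat{\boldsymbol q}_h)\cdot \boldsymbol n_K\|^2_{\partial K}$ vanish identically rather than merely being controlled. A less carefully chosen pair (e.g., an interpolant into $\boldsymbol Q_h^k$ rather than $\boldsymbol Q_h^{k,RT}$, or a naive $L^2$ projection for $\hat{\boldsymbol q}_h$) would leave an $O(h^{k+1/2})$ residual at the boundary that is amplified by $\rho^{-1/2}h^{-1/2}$, destroying uniformity. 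It is precisely the structural compatibility between $\boldsymbol Q_h^{k,RT}$ and $\widehat Q_h^k$ --- the same compatibility that underlies the hypothesis $\{\!\!\{\boldsymbol R_h\}\!\!\}\subset \widehat Q_h$ in Theorem \ref{uniform:WG:2} --- that makes the full infimum collapse to the $\rho$-independent volume estimates.
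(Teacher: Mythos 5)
Your proposal is correct and follows the route the paper clearly intends (the quasi-optimality of Theorem \ref{Error:uniform_WG:mixed} combined with the canonical Raviart--Thomas interpolant, its single-valued normal trace as $\hat{\boldsymbol q}_h$ so that the $\rho^{-1}h_K^{-1}$-weighted jump term vanishes identically, the commuting-diagram identity for the divergence, and the $L^2$ projection for $u$); the paper itself defers the detailed proof to \cite{hong2017uniformly}, but your argument is exactly the standard one this setup is designed to support. The hypotheses of Theorem \ref{Error:uniform_WG:mixed} (equivalently Theorem \ref{uniform:WG:2}) are indeed satisfied by the chosen spaces, and all the approximation estimates you invoke are standard, so there is no gap.
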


We show here some convergence results which are uniform with respect
to the stabilization parameter, which, again, improve the convergence
results in \cite{wang2014weak, chen2016weak}. These are mainly based
on the uniform inf-sup conditions we present in Theorems
\ref{inf-sup:bw} and \ref{uniform:WG:2}.

\section{Stability and convergence analysis of \comments{stabilized hybrid mixed (HDG) methods}}\label{sec:HDG}

The \comments{stabilized hybrid mixed (HDG) methods} read: Find $(\boldsymbol{p}_h,  \tilde{u}_h)\in
\boldsymbol Q_h\times \widetilde{V}_h$ such that for any
$(\boldsymbol{q}_h, \tilde{v}_h)\in \boldsymbol Q_h\times
\widetilde{V}_h$,
\begin{equation}\label{Stabilizedpweak}
\left\{
\begin{aligned}
a_h(\boldsymbol{p}_h,\boldsymbol{q}_h) + b_h(\boldsymbol{ q}_h,\tilde
{u}_h) &= 0, \\
b_h(\boldsymbol{p}_h, \tilde v_h) + c_h( \tilde{u}_h,
    \tilde{v}_h) & =-(f,v_h)_{\mathcal T_h}.
\end{aligned}
\right.
\end{equation}
Here, the bilinear forms are defined as follows
\begin{equation}\label{definition:ahbhch}
\begin{aligned}
a_h(\boldsymbol{p}_h,\boldsymbol{q}_h) &= (c \boldsymbol{p}_h,
    \boldsymbol{q}_h)_{\mathcal T_h},\\
b_h(\boldsymbol{q}_h, \tilde u_h) &= -(u_h, {\rm div}
    \boldsymbol{q}_h)_{\mathcal T_h}+\langle\hat{u}_h,
    \boldsymbol{q}_h\cdot\boldsymbol{n} \rangle_{\partial {\mathcal
      T_h}},\\
c_h( \tilde{u}_h, \tilde{v}_h) &= -\langle \tau(\hat
    P_h(u_h)-\hat{u}_h),  \hat P_h(v_h)-\hat{v}_h\rangle_{\partial
  {\mathcal T_h}},
\end{aligned}
\end{equation}
and $\tau > 0$ is the stabilization parameter.  If $\hat P_h$ is an
identity operator, then we obtain the standard HDG method. 
If $\hat P_h$ is a local $L^2$ projection, i.e. 
\begin{equation}\label{Local:projection}
\hat{P}_h|_{\partial K} := \hat P_{\partial K}:L^2(\partial
K)\rightarrow  \widehat Q(\partial K),
\end{equation}
where $\widehat Q(\partial K)$ is the trace space on $\partial K$, namely
$\widehat Q(\partial K)=\bigcup\limits_{e\in \partial K}
\widehat{Q}(e)$, we obtain the modified HDG methods with reduced
stabilization \cite{oikawa2015hybridized}.

The following lemma shows the consistency property of \comments{stabilized hybrid mixed (HDG)} methods.
\begin{lemma}\label{Error:common:HDG}
Let $f\in L^2(\Omega)$, and $(\boldsymbol p, u)$ be the solution of
\eqref{H1} or \eqref{HDIV}, then $(\boldsymbol p,  u)$ satisfies the
following consistency property
 \begin{equation}\label{Error:common:HDG:consistence}
\left\{
\begin{aligned}
a_h(\boldsymbol{p},\boldsymbol{q}_h)+b_h(\boldsymbol{ q}_h,
    u) &= 0 \qquad\qquad~~\quad \forall \boldsymbol{q}_h\in
\boldsymbol{Q}_h,\\
b_h(\boldsymbol{p}, \tilde v_h) + c_h( u,
    \tilde{v}_h) &= -(f,v_h)_{\mathcal T_h} \qquad \forall \tilde
v_h\in \widetilde V_h.
\end{aligned}
\right.
\end{equation}
\end{lemma}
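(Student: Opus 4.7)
The plan is to verify the two consistency identities one at a time by substituting the exact solution $(\boldsymbol p, u)$ into the bilinear forms, noting first that because $u\in H^1_0(\Omega)$ the trace $u|_{\mathcal E_h}$ is single-valued and vanishes on $\partial\Omega$, so we may identify $\tilde u := (u, u|_{\mathcal E_h}) \in H^1_0(\Omega)\times L^2(\mathcal E_h)$ with an admissible "exact" element of $\widetilde V_h$-type.

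For the first identity, I would compute
\begin{equation*}
a_h(\boldsymbol p,\boldsymbol q_h) + b_h(\boldsymbol q_h, u)
= (c\boldsymbol p,\boldsymbol q_h)_{\mathcal T_h}
- (u,\mathrm{div}\,\boldsymbol q_h)_{\mathcal T_h}
+ \langle u, \boldsymbol q_h\cdot\boldsymbol n\rangle_{\partial \mathcal T_h},
\end{equation*}
then apply integration by parts elementwise to rewrite the last two terms as $(\nabla u,\boldsymbol q_h)_{\mathcal T_h}$. The result is $(c\boldsymbol p+\nabla u,\boldsymbol q_h)_{\mathcal T_h}$, which vanishes by the first equation of \eqref{H11}.

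For the second identity, the $b_h$-term expands as
\begin{equation*}
b_h(\boldsymbol p,\tilde v_h)
= -(v_h,\mathrm{div}\,\boldsymbol p)_{\mathcal T_h}
+ \langle \hat v_h, \boldsymbol p\cdot\boldsymbol n\rangle_{\partial \mathcal T_h}
= -(f,v_h)_{\mathcal T_h}
+ \langle \hat v_h, \boldsymbol p\cdot\boldsymbol n\rangle_{\partial \mathcal T_h},
\end{equation*}
using $\mathrm{div}\,\boldsymbol p = f$. The boundary term vanishes: on each interior edge the single-valued $\hat v_h$ pairs with $\boldsymbol p\cdot\boldsymbol n_{K^+}+\boldsymbol p\cdot\boldsymbol n_{K^-} = [\boldsymbol p] = 0$ (from $\boldsymbol p\in\boldsymbol H(\mathrm{div};\Omega)$), and on $\partial\Omega$ we use $\hat v_h=0$. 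So it remains to show $c_h(u,\tilde v_h)=0$.

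This is the only delicate point and must be handled in both cases of $\hat P_h$. If $\hat P_h$ is the identity, then $\hat P_h(u)=u|_{\partial K}=\hat u$ on every $\partial K$ and $c_h(u,\tilde v_h)$ is trivially zero. If $\hat P_h$ is the local $L^2$-projection onto $\widehat Q(\partial K)$, then $\hat P_h(u)-\hat u = \hat P_{\partial K}(u|_{\partial K})-u|_{\partial K}$ is by definition $L^2(\partial K)$-orthogonal to $\widehat Q(\partial K)$, while $\hat P_h(v_h)-\hat v_h$ lies in $\widehat Q(\partial K)$ (assuming the standard compatibility $\widehat V(e)\subset \widehat Q(e)$ and $\tau$ piecewise constant on $\partial K$, which are the standing assumptions for this framework). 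Hence each elementwise pairing vanishes and the sum $c_h(u,\tilde v_h)=0$. Combining with the computation of $b_h(\boldsymbol p,\tilde v_h)$ yields the desired second identity. The main obstacle here is purely bookkeeping the two cases of $\hat P_h$ uniformly; once the orthogonality structure $\hat P_h(u)-\hat u \perp \widehat Q(\partial K)$ is recorded, everything collapses.
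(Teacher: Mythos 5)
Your proof is correct and is precisely the standard argument the authors intend (the paper states this lemma without proof): elementwise integration by parts reduces the first identity to the PDE $c\boldsymbol p+\nabla u=0$, the single-valuedness of $\hat v_h$ together with $[\boldsymbol p]=0$ for $\boldsymbol p\in\boldsymbol H({\rm div};\Omega)$ kills the trace term in $b_h(\boldsymbol p,\tilde v_h)$, and $c_h(u,\tilde v_h)=0$ in both variants of $\hat P_h$. Your explicit flagging of the compatibility condition $\widehat V(e)\subseteq\widehat Q(e)$ needed for the orthogonality argument in the reduced-stabilization case is a point the paper leaves implicit, and is the right hypothesis to record.
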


\subsection{Divergence-based uniform inf-sup condition}

In this subsection, we will give the uniform inf-sup condition for
\eqref{definition:ahbhch} when $\tau=\rho h_K$ under the following
parameter-dependent norms
\begin{equation}\label{HDG:divnorm}
\begin{aligned}
\|\boldsymbol{p}_h\|_{{\rm div},\rho,h}^2 &= (c \boldsymbol{p}_h,
\boldsymbol{p}_h)_{\mathcal T_h} + ( {\rm div}\boldsymbol{p}_h, {\rm
div} \boldsymbol{p}_h)_{\mathcal T_h} + \rho^{-1}\sum\limits_{e\in
\mathcal{E}^i_h} h_e^{-1} \langle\hat P_e([\boldsymbol{p}_h]),\hat
P_e([\boldsymbol{p}_h])\rangle_e, \\
\|\tilde v_h\|_{0, \rho,h}^2 &= (v_h,v_h)_{\mathcal T_h}+\rho
\sum\limits_{e\in \mathcal{E}^i_h}h_e\langle\hat v_h,\hat
v_h\rangle_e.
\end{aligned}
\end{equation}
where $\hat P_e: L^2(e)\mapsto \widehat V(e)$ is the $L^2$ projection.

Using the parameter-dependent norms \eqref{HDG:divnorm}, we have
the following results, whose details will be reported in
\cite{hong2017uniformly, hong2018extended}.

\begin{theorem}\label{bounded:ahbhch}
For any $0<\rho\leq 1$, the boundedness of $a_h(\cdot,\cdot)$,
$b_h(\cdot,\cdot)$ and $c_h( \cdot, \cdot)$ is as follows
\cite{hong2017uniformly}
\begin{equation} \label{HDG;div-bound}
\begin{aligned}
|a_h(\boldsymbol{p}_h,\boldsymbol{q}_h)| & \leq
\|\boldsymbol{p}_h\|_{{\rm div}, \rho,h} \|\boldsymbol{q}_h\|_{{\rm
div}, \rho, h},\\
|b_h(\boldsymbol{q}_h, \tilde u_h)| & \leq \|\boldsymbol{q}_h\|_{{\rm
  div}, \rho,h} \|\tilde u_h\|_{0,\rho,h},\\
|c_h( \tilde{u}_h, \tilde{v}_h)| & \leq C \|\tilde u_h\|_{0,\rho,h}
\|\tilde v_h\|_{0,\rho,h},
\end{aligned}
\end{equation}
where $C$ is independent of both mesh size $h$ and $\rho$.
\end{theorem}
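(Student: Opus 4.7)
The plan is to prove each of the three boundedness inequalities separately by Cauchy--Schwarz combined with standard manipulations; the delicacy is only in tracking the $\rho$-dependence so that the resulting constants are independent of both $h$ and $\rho$.

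For the bound on $a_h$, there is essentially nothing to do. Since $a_h(\boldsymbol{p}_h,\boldsymbol{q}_h)=(c\boldsymbol{p}_h,\boldsymbol{q}_h)_{\mathcal T_h}$ and the $(c\cdot,\cdot)_{\mathcal T_h}$ piece appears as one of the three summands defining $\|\cdot\|_{{\rm div},\rho,h}^2$ in \eqref{HDG:divnorm}, the Cauchy--Schwarz inequality for this weighted inner product directly delivers the estimate.

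For the bound on $b_h$, I would split $b_h(\boldsymbol{q}_h,\tilde u_h)=-(u_h,{\rm div}\,\boldsymbol{q}_h)_{\mathcal T_h}+\langle\hat u_h,\boldsymbol{q}_h\cdot\boldsymbol n\rangle_{\partial\mathcal T_h}$. The first term is bounded by $\|u_h\|\,\|{\rm div}\,\boldsymbol{q}_h\|$ directly. For the second, since $\hat u_h$ is single-valued across interior faces and vanishes on $\mathcal E_h^\partial$, identity \eqref{eq:7} collapses it to $\langle\hat u_h,[\boldsymbol{q}_h]\rangle_{\mathcal E_h^i}$. Using $\hat u_h|_e\in\widehat V(e)$, I can insert the projection $\hat P_e$ for free and rewrite this as $\langle\hat u_h,\hat P_e([\boldsymbol{q}_h])\rangle_{\mathcal E_h^i}$. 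A weighted Cauchy--Schwarz with the edge-wise scaling $\rho^{1/2}h_e^{1/2}\cdot\rho^{-1/2}h_e^{-1/2}$ gives one factor controlled by $\|\tilde u_h\|_{0,\rho,h}$ and the other by the edge contribution inside $\|\boldsymbol{q}_h\|_{{\rm div},\rho,h}$. A final vector Cauchy--Schwarz $a_1b_1+a_2b_2\le\sqrt{a_1^2+a_2^2}\sqrt{b_1^2+b_2^2}$ combines both terms into the desired product of norms.

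For the bound on $c_h$, the crux is to prove $\sum_K h_K\|\hat P_h u_h-\hat u_h\|_{\partial K}^2\lesssim\rho^{-1}\|\tilde u_h\|_{0,\rho,h}^2$ (and symmetrically for $\tilde v_h$); the claim then follows by Cauchy--Schwarz on the face inner product with weight $\tau=\rho h_K$. By the triangle inequality one treats $\sum_K h_K\|\hat P_h u_h\|_{\partial K}^2$ and $\sum_K h_K\|\hat u_h\|_{\partial K}^2$ separately. The first is handled by the $L^2(\partial K)$-contractivity of $\hat P_h$ and a discrete trace/inverse inequality, $\|\hat P_h u_h\|_{\partial K}^2\le\|u_h\|_{\partial K}^2\lesssim h_K^{-1}\|u_h\|_K^2$, which after summation and multiplication by $\rho\le 1$ is absorbed into $\|u_h\|^2$. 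The second is reorganized as an edge sum; shape regularity ($h_K\simeq h_e$ for $e\subset\partial K$) converts it to $\sum_e h_e\|\hat u_h\|_e^2$, and the factor $\rho$ then matches the corresponding term of $\|\tilde u_h\|_{0,\rho,h}^2$ exactly.

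The main technical obstacle is the $c_h$ estimate: one must verify that the constants coming from the discrete trace/inverse inequality and from the face-to-edge regrouping are truly independent of $\rho$. The key is that $\rho\le 1$ dominates the $\rho\|u_h\|^2$ piece produced by the inverse estimate, while the remaining $\rho\sum_e h_e\|\hat u_h\|_e^2$ piece is already part of $\|\tilde u_h\|_{0,\rho,h}^2$. This reasoning covers both the standard HDG case ($\hat P_h=I$) and the reduced-stabilization case where $\hat P_h$ is the local $L^2$ projection \eqref{Local:projection}, since in either case $\hat P_h$ is a contraction on $L^2(\partial K)$.
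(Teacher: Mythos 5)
Your proof is correct and complete: Cauchy--Schwarz in the $c$-weighted inner product handles $a_h$; the reduction of the boundary term via \eqref{eq:7} to $\langle \hat u_h,[\boldsymbol q_h]\rangle_{\mathcal E_h^i}$, the free insertion of $\hat P_e$, and the $\rho^{\pm 1/2}h_e^{\pm 1/2}$ splitting give the $b_h$ bound with constant $1$; and the discrete trace inequality together with $\rho\le 1$ and the face-to-edge regrouping give the $c_h$ bound uniformly in $\rho$ for both choices of $\hat P_h$. The paper defers this proof to its references, but your argument is precisely the standard route one would expect there, with the $\rho$-dependence tracked correctly throughout.
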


Denote 
$$
{\rm Ker}(B) := \{\boldsymbol q_h\in \boldsymbol Q_h: b_h(\boldsymbol
q_h,\tilde u_h)=0, \forall \tilde u_h\in \widetilde V_h\}.
$$
Then we have the coercivity of $a_h(\cdot,\cdot)$ on the ${\rm Ker}(B)$
and the inf-sup condition of $b_h(\cdot,\cdot)$ as follows. 

\begin{theorem}[\cite{hong2017uniformly, hong2018extended}] \label{coercivity:ah}
Assume that ${\rm div}_h \boldsymbol Q_h \subset V_h$. Then  
\begin{equation}
a_h(\boldsymbol{p}_h,\boldsymbol{p}_h)\geq
\|\boldsymbol{p}_h\|^2_{{\rm div}, \rho, h} \qquad \forall \boldsymbol{p}_h
\in {\rm Ker}(B).
\end{equation}
\end{theorem}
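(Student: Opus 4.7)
The plan is to exploit the kernel condition to show that both the divergence term and the jump-projection term in $\|\cdot\|_{\mathrm{div},\rho,h}$ vanish on $\mathrm{Ker}(B)$, so that $a_h(\boldsymbol p_h,\boldsymbol p_h)$ recovers the full norm. In fact I expect to obtain equality, not merely inequality.

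First I would use the two-component structure of $\widetilde V_h = V_h \times \widehat V_h$ and test $b_h(\boldsymbol p_h,\cdot) = 0$ on each factor separately. Testing with $\tilde v_h = (v_h,0)$ yields $(v_h, \mathrm{div}_h \boldsymbol p_h)_{\mathcal T_h} = 0$ for all $v_h \in V_h$. The assumption $\mathrm{div}_h \boldsymbol Q_h \subset V_h$ lets me choose $v_h = \mathrm{div}_h \boldsymbol p_h$, forcing $\mathrm{div}_h \boldsymbol p_h \equiv 0$. This kills the middle term in $\|\boldsymbol p_h\|^2_{\mathrm{div},\rho,h}$.

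Next I would test with $\tilde v_h = (0,\hat v_h)$ for $\hat v_h \in \widehat V_h$, giving $\langle \hat v_h, \boldsymbol p_h \cdot \boldsymbol n\rangle_{\partial \mathcal T_h} = 0$. Applying the DG identity \eqref{eq:7} together with the facts that $\hat v_h$ is single-valued on $\mathcal E_h$ (so $\llbracket \hat v_h\rrbracket = 0$ on interior edges) and $\hat v_h|_{\mathcal E_h^\partial} = 0$, the sum collapses to $\langle [\boldsymbol p_h], \hat v_h\rangle_{\mathcal E_h^i} = 0$ for every $\hat v_h \in \widehat V_h$. Because $\widehat V_h$ decouples edge-by-edge, I can take $\hat v_h$ supported on a single interior edge $e$ with $\hat v_h|_e = \hat P_e([\boldsymbol p_h]) \in \widehat V(e)$. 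Using the defining property of the $L^2$-projection $\hat P_e$, this choice produces $\|\hat P_e([\boldsymbol p_h])\|_{0,e}^2 = 0$, hence $\hat P_e([\boldsymbol p_h]) = 0$ for every $e \in \mathcal E_h^i$. This eliminates the third term in $\|\boldsymbol p_h\|^2_{\mathrm{div},\rho,h}$.

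Putting the two observations together, the norm reduces to $\|\boldsymbol p_h\|^2_{\mathrm{div},\rho,h} = (c\boldsymbol p_h,\boldsymbol p_h)_{\mathcal T_h} = a_h(\boldsymbol p_h,\boldsymbol p_h)$ for $\boldsymbol p_h \in \mathrm{Ker}(B)$, which yields the claimed coercivity. The argument is essentially bookkeeping; the main subtlety is bookkeeping the surface integral $\langle \hat v_h, \boldsymbol p_h\cdot \boldsymbol n\rangle_{\partial \mathcal T_h}$ via \eqref{eq:7} and verifying that $\hat P_e([\boldsymbol p_h])$ is an admissible test function on a single edge (which is built into the product-space definition of $\widehat V_h$ and the zero boundary condition on $\mathcal E_h^\partial$).
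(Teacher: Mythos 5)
Your argument is correct, and it is the natural (essentially only) proof: testing $b_h(\boldsymbol p_h,\cdot)=0$ against $(v_h,0)$ with $v_h=\mathrm{div}_h\boldsymbol p_h\in V_h$ kills the divergence term, testing against $(0,\hat v_h)$ and using identity \eqref{eq:7} with the single-valuedness of $\hat v_h$ reduces the boundary pairing to $\langle[\boldsymbol p_h],\hat v_h\rangle_{\mathcal E_h^i}=0$, and the edge-local choice $\hat v_h|_e=\hat P_e([\boldsymbol p_h])$ kills the projected-jump term, so that in fact $a_h(\boldsymbol p_h,\boldsymbol p_h)=\|\boldsymbol p_h\|^2_{\mathrm{div},\rho,h}$ on $\mathrm{Ker}(B)$. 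The paper itself defers the proof to the cited references \cite{hong2017uniformly, hong2018extended}, but your reasoning fills that gap correctly and with no missing steps.
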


\begin{theorem}[\cite{hong2017uniformly, hong2018extended}] \label{uniform:inf-sup:bh}
For $k\ge 0$, if $\boldsymbol Q_h=\boldsymbol Q_h^{k+1},
V_h=V_h^k, \widehat V_h=\widehat V_h^r$ where $0\le r\le k+1$, or $\boldsymbol
Q_h=\boldsymbol Q_h^{k,RT}, V_h=V_h^k, \widehat V_h=\widehat V_h^r$ where
$0\le r\le k$, then we have 
\begin{equation}
 \inf_{\tilde u_h\in \widetilde V_h}\sup_{\boldsymbol{q}_h\in
\boldsymbol{Q}_h}   \frac{b_h(\boldsymbol q_h,\tilde
u_h)}{\|\boldsymbol q_h\|_{{\rm div},\rho,h}\|\tilde
u_h\|_{0,\rho,h}}    \geq \beta_{0},
\end{equation}
where $\beta_0$ is a constant independent of both $\rho$ and mesh size
$h$.
\end{theorem}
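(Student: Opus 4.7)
The plan is to establish the inf-sup condition by an explicit Fortin-type construction: given $\tilde u_h=(u_h,\hat u_h)\in\widetilde V_h$, I will build a test function $\boldsymbol q_h\in\boldsymbol Q_h$ as a sum $\boldsymbol q_h^1+\boldsymbol q_h^2$ satisfying
$$
b_h(\boldsymbol q_h,\tilde u_h)\gtrsim \|\tilde u_h\|_{0,\rho,h}^2,\qquad \|\boldsymbol q_h\|_{{\rm div},\rho,h}\lesssim \|\tilde u_h\|_{0,\rho,h},
$$
with constants independent of $h$ and $\rho$. Since $\hat u_h$ is single-valued on $\mathcal E_h^i$ and vanishes on $\partial\Omega$, identity \eqref{eq:7} together with $\hat u_h|_e\in\widehat V(e)$ yields the convenient rewriting
$$
b_h(\boldsymbol q_h,\tilde u_h)=-({\rm div}\boldsymbol q_h,u_h)_{\mathcal T_h}+\langle \hat u_h,\hat P_e([\boldsymbol q_h])\rangle_{\mathcal E_h^i},
$$
which cleanly separates the two pieces of $\|\tilde u_h\|_{0,\rho,h}^2$ and motivates splitting $\boldsymbol q_h$ into a conforming contribution handling $\|u_h\|^2$ and a nonconforming one handling $\rho\sum_e h_e\|\hat u_h\|_{0,e}^2$.

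For Step 1, I use the fact that both pairs considered, $(\boldsymbol Q_h^{k+1},V_h^k)$ and $(\boldsymbol Q_h^{k,RT},V_h^k)$, are inf-sup stable for the classical $\boldsymbol H({\rm div})$–$L^2$ mixed setting: divergence maps $\boldsymbol Q_h\cap\boldsymbol H({\rm div};\Omega)$ onto $V_h$. Concretely, I first lift $u_h$ to some $\boldsymbol w\in H^1(\Omega)^d$ with ${\rm div}\boldsymbol w=u_h$ and $\|\boldsymbol w\|_1\lesssim \|u_h\|$ (Bogovskii/Stokes surjectivity), and then apply the commuting Raviart–Thomas interpolation to obtain $\boldsymbol q_h^1\in\boldsymbol Q_h\cap\boldsymbol H({\rm div};\Omega)$ with $-{\rm div}\boldsymbol q_h^1=u_h$ and $\|\boldsymbol q_h^1\|_0\lesssim \|u_h\|$. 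Because $[\boldsymbol q_h^1]=0$ on every interior edge, we get $b_h(\boldsymbol q_h^1,\tilde u_h)=\|u_h\|^2$ and $\|\boldsymbol q_h^1\|_{{\rm div},\rho,h}\lesssim \|u_h\|$, uniformly in $\rho$.

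For Step 2, I exploit that the normal trace of $\boldsymbol Q_h$ on each face fully spans $\widehat V(e)$: this is exactly the content of the degree restrictions $0\le r\le k+1$ (BDM) or $0\le r\le k$ (RT) in the hypothesis. Element by element, I prescribe $\boldsymbol q_h^2\cdot\boldsymbol n$ on each face of $K$ so that $[\boldsymbol q_h^2]|_e=\rho h_e\hat u_h$ for $e\in\mathcal E_h^i$ and $\boldsymbol q_h^2\cdot\boldsymbol n=0$ on $\mathcal E_h^\partial$, and I zero out all remaining (interior) degrees of freedom. Scaling to the reference element gives
$$
\|\boldsymbol q_h^2\|_{0,K}^2\lesssim h_K\|\boldsymbol q_h^2\cdot\boldsymbol n\|_{0,\partial K}^2,\qquad \|{\rm div}\boldsymbol q_h^2\|_{0,K}^2\lesssim h_K^{-1}\|\boldsymbol q_h^2\cdot\boldsymbol n\|_{0,\partial K}^2,
$$
which, combined with $\rho\le 1$, yields both $\|\boldsymbol q_h^2\|_{{\rm div},\rho,h}^2\lesssim \rho\sum_{e}h_e\|\hat u_h\|_{0,e}^2$ and $\langle \hat u_h,[\boldsymbol q_h^2]\rangle_{\mathcal E_h^i}=\rho\sum_e h_e\|\hat u_h\|_{0,e}^2$.

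Combining the two pieces gives $b_h(\boldsymbol q_h^1+\boldsymbol q_h^2,\tilde u_h)=\|u_h\|^2+\rho\sum_e h_e\|\hat u_h\|_{0,e}^2-(u_h,{\rm div}\boldsymbol q_h^2)$, and the cross term is absorbed by Cauchy–Schwarz and Young's inequality using $\|{\rm div}\boldsymbol q_h^2\|\lesssim \rho(\sum_e h_e\|\hat u_h\|_{0,e}^2)^{1/2}$, leaving at least $\tfrac12\|\tilde u_h\|_{0,\rho,h}^2$; triangle inequality yields the bound on $\|\boldsymbol q_h\|_{{\rm div},\rho,h}$. The main obstacle I anticipate is ensuring uniformity as $\rho\to 0$: this forces the \emph{precise} scaling $[\boldsymbol q_h^2]\sim \rho h_e\hat u_h$ in Step 2, so that the $L^2$-mass, divergence, and $\rho^{-1}h_e^{-1}\|\hat P_e([\cdot])\|_{0,e}^2$ contributions to $\|\boldsymbol q_h^2\|_{{\rm div},\rho,h}^2$ all scale identically with $\rho$. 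Any other weighting—or failure to project via $\hat P_e$ in the jump norm—would destroy the $\rho$-uniformity that \eqref{HDG:divnorm} is designed to capture.
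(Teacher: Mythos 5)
The paper does not actually prove Theorem \ref{uniform:inf-sup:bh}; it is stated with a citation to \cite{hong2017uniformly, hong2018extended} and the details are deferred there, so there is no in-paper argument to compare against. Your Fortin-type construction is, however, a correct and essentially self-contained proof, and it is the natural route for a $\rho$-uniform inf-sup of this kind: the rewriting $b_h(\boldsymbol q_h,\tilde u_h)=-({\rm div}_h\boldsymbol q_h,u_h)_{\mathcal T_h}+\langle \hat P_e([\boldsymbol q_h]),\hat u_h\rangle_{\mathcal E_h^i}$ is valid (single-valuedness of $\hat u_h$, $\hat u_h|_{\mathcal E_h^\partial}=0$, and $\hat u_h|_e\in\widehat V(e)$ are exactly what is needed), the conforming lift $\boldsymbol q_h^1$ uses only the classical surjectivity of ${\rm div}$ onto $V_h^k$ from ${\rm BDM}_{k+1}$ resp.\ ${\rm RT}_k$, and the jump-lifting $\boldsymbol q_h^2$ with $[\boldsymbol q_h^2]=\rho h_e\hat u_h$ correctly isolates why the degree restrictions $r\le k+1$ (BDM) and $r\le k$ (RT) appear, and why all three contributions to $\|\boldsymbol q_h^2\|_{{\rm div},\rho,h}^2$ scale like $\rho\sum_e h_e\|\hat u_h\|_{0,e}^2$.

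One small refinement: with generic hidden constants, the final absorption $|({\rm div}\boldsymbol q_h^2,u_h)|\le \tfrac12\|u_h\|^2+\tfrac{C^2\rho}{2}\,\rho\sum_e h_e\|\hat u_h\|_{0,e}^2$ only leaves a positive remainder for $\rho\le 1/C^2$, not for all $\rho\le 1$. This is repaired inside your own framework by taking $\boldsymbol q_h=\boldsymbol q_h^1+\delta\,\boldsymbol q_h^2$ with a fixed $\delta\sim C^{-2}$ (or a weighted Young inequality); the inf-sup constant then degrades only by the fixed factor $\delta$ and remains independent of $h$ and $\rho$. With that one-line adjustment the argument is complete.
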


\begin{remark}
When $\tau = 0$, we can also have the stability result as Theorem
\ref{uniform:inf-sup:bh}, when choosing the following norms for any
$\tilde v_h\in \widetilde V_h$, and $\boldsymbol{p}_h\in
\boldsymbol{Q}_h$  
$$
\begin{aligned}
\|\boldsymbol{p}_h\|_{{\rm div},1,h}^2 &= (c \boldsymbol{p}_h,
\boldsymbol{p}_h)_{\mathcal T_h}+( {\rm div}\boldsymbol{p}_h, {\rm
div} \boldsymbol{p}_h)_{\mathcal T_h}+\sum\limits_{e\in
\mathcal{E}^i_h}h_e^{-1} \langle\hat
P_e([\boldsymbol{p}_h]),\hat P_e([\boldsymbol{p}_h])\rangle_e, \\
\|\tilde v_h\|_{0,1,h}^2 &= (v_h,v_h)_{\mathcal T_h}+\sum\limits_{e\in
  \mathcal{E}^i_h}h_e\langle\hat v_h,\hat v_h\rangle_e.
\end{aligned}
$$
\end{remark}

\begin{theorem} [\cite{hong2017uniformly}] \label{Error:uniform_HDG:mixed}
Let $(\boldsymbol p, u)$ be the solution for \eqref{HDIV} and
$(\boldsymbol{p}_h, \tilde{u}_h)\in \boldsymbol {Q}_h \times
\widetilde{V}_h$ be the solution for \eqref{Stabilizedpweak} with $\tau=\rho
h_K$. If we choose the spaces $\boldsymbol{Q}_h\times \widetilde{V}_h$
that satisfy the condition in Theorem \ref{uniform:inf-sup:bh}, then
for any $0<\rho\leq 1$ the following estimate holds
\begin{equation}
\|\boldsymbol p-\boldsymbol{p}_h\|_{{\rm div},\rho,h}+\|u-\tilde
u_h\|_{0,\rho,h}\leq C_{e,3}  \inf\limits_{\boldsymbol{q}_h\in
\boldsymbol{Q}_h, \tilde v_h\in \widetilde V_h } \left(\|\boldsymbol
p - \boldsymbol{q}_h\|_{{\rm div},\rho,h}+\|u-\tilde
v_h\|_{0,\rho,h}\right),
\end{equation}
where $C_{e,3} $ is a uniform constant with respect to both $\rho$ and
$h$.
\end{theorem}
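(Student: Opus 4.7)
The plan is to run a standard Céa-type argument for the perturbed saddle-point system \eqref{Stabilizedpweak}, arranged so that every constant is uniform in $\rho$ and $h$. First, subtracting \eqref{Stabilizedpweak} from the consistency identities in Lemma \ref{Error:common:HDG} yields the Galerkin orthogonality
\begin{align*}
a_h(\boldsymbol p - \boldsymbol p_h, \boldsymbol q_h) + b_h(\boldsymbol q_h, u - \tilde u_h) &= 0, \\
b_h(\boldsymbol p - \boldsymbol p_h, \tilde v_h) + c_h(u - \tilde u_h, \tilde v_h) &= 0,
\end{align*}
for all $(\boldsymbol q_h, \tilde v_h)\in \boldsymbol Q_h\times \widetilde V_h$. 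I would then assemble the combined bilinear form
$$
\mathcal A_h((\boldsymbol p_h, \tilde u_h),(\boldsymbol q_h, \tilde v_h)) := a_h(\boldsymbol p_h, \boldsymbol q_h) + b_h(\boldsymbol q_h, \tilde u_h) + b_h(\boldsymbol p_h, \tilde v_h) + c_h(\tilde u_h, \tilde v_h),
$$
and equip $\boldsymbol Q_h\times \widetilde V_h$ with the product norm $|||(\boldsymbol q_h, \tilde v_h)|||_h := \|\boldsymbol q_h\|_{\mathrm{div},\rho,h} + \|\tilde v_h\|_{0,\rho,h}$.

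The central step is to establish a uniform combined inf-sup
$$
\inf_{(\boldsymbol p_h, \tilde u_h)}\sup_{(\boldsymbol q_h, \tilde v_h)} \frac{\mathcal A_h((\boldsymbol p_h, \tilde u_h),(\boldsymbol q_h, \tilde v_h))}{|||(\boldsymbol p_h, \tilde u_h)|||_h\,|||(\boldsymbol q_h, \tilde v_h)|||_h} \geq \beta_\star,
$$
with $\beta_\star$ independent of $\rho$ and $h$. This is the Brezzi-Braess perturbed saddle-point theory applied with ingredients already in hand: Theorem \ref{bounded:ahbhch} gives uniform continuity of all three forms, Theorem \ref{coercivity:ah} gives coercivity of $a_h$ on $\mathrm{Ker}(B)$, and Theorem \ref{uniform:inf-sup:bh} gives the $b_h$ inf-sup. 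Moreover the minus sign in \eqref{definition:ahbhch} makes $c_h$ negative semi-definite, which is the correct sign for the classical test choice $(\boldsymbol p_h + \delta \boldsymbol r_h, -\tilde u_h)$, where $\boldsymbol r_h\in \boldsymbol Q_h$ realizes the $b_h$ inf-sup against $\tilde u_h$ and $\delta>0$ is a small, $\rho$- and $h$-independent number to be chosen so that $a_h$-coercivity on $\mathrm{Ker}(B)$ dominates the mixed cross terms.

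The rest is routine. For arbitrary $(\boldsymbol q_h, \tilde v_h)\in \boldsymbol Q_h\times \widetilde V_h$, apply the combined inf-sup to $(\boldsymbol q_h - \boldsymbol p_h, \tilde v_h - \tilde u_h)$; Galerkin orthogonality rewrites the numerator as $\mathcal A_h((\boldsymbol p - \boldsymbol q_h, u - \tilde v_h),(\cdot,\cdot))$, which by Theorem \ref{bounded:ahbhch} is controlled by $C\,|||(\boldsymbol p - \boldsymbol q_h, u - \tilde v_h)|||_h$ times the test-pair norm. A triangle inequality and taking the infimum over $(\boldsymbol q_h, \tilde v_h)$ produce the stated best-approximation estimate with $C_{e,3}$ uniform in $\rho$ and $h$.

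The main obstacle is the middle step: proving the combined inf-sup uniformly in $\rho$ in the presence of the nontrivial $c_h$. Classical Brezzi theory handles $c_h\equiv 0$, so a perturbed version must be invoked and one has to verify that the $\rho$-weighted contribution of $c_h$ -- which scales precisely as the $\rho h_e$ piece built into $\|\tilde v_h\|_{0,\rho,h}^2$ -- interacts neutrally with the inf-sup-driven perturbation $\delta\boldsymbol r_h$. Tuning $\delta$ (and possibly adding a matching correction $\delta\tilde w_h$ in the $\widetilde V_h$-slot) so that neither $\rho\to 0$ nor $\rho\to 1$ degrades the lower bound is the only nonroutine calculation in the argument.
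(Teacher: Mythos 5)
Your proposal is correct and follows exactly the route the paper sets up: the paper defers the proof to the cited reference but arranges Theorems \ref{bounded:ahbhch}, \ref{coercivity:ah} and \ref{uniform:inf-sup:bh} as precisely the Brezzi-type ingredients (uniform boundedness, kernel coercivity of $a_h$, inf-sup for $b_h$) that, together with the negative semi-definiteness of $c_h$ and the consistency of Lemma \ref{Error:common:HDG}, yield the combined inf-sup and the C\'ea-type bound you describe. The step you flag as nonroutine is in fact covered by the classical stability theory for perturbed saddle-point problems (since $a_h$ is symmetric positive semi-definite on all of $\boldsymbol Q_h$ and $-c_h$ is symmetric positive semi-definite, the resulting constant is uniform in the size of $c_h$, hence in $\rho$ and $h$), so no genuinely new calculation is needed.
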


\begin{corollary} [\cite{hong2017uniformly}]
\label{Error:uniform_HDG:mixed:rate}
Let $(\boldsymbol p,  u)$ be the solution of \eqref{HDIV} and
$\boldsymbol p\in \boldsymbol H^{k+1}(\Omega),{\rm div} \boldsymbol
p\in H^{k+1}(\Omega), u\in H^{k+1}(\Omega) $, and $(\boldsymbol{p}_h,
\tilde{u}_h)\in \boldsymbol {Q}_h\times \widetilde{V}_h$ be the
solution of \eqref{Stabilizedpweak} with $\tau=\rho h_K $. If we
choose the spaces $V_h\times\boldsymbol Q_h\times \hat V_h$ as
${V}_h^{k}\times\boldsymbol {Q}_h^{k,RT}\times \widehat V_h^k$, then
the following estimate holds 
\begin{equation}
 \|\boldsymbol p-\boldsymbol{ p}_h\|_{{\rm div},\rho,h}+\|u-\tilde
{u}_h\|_{0,\rho,h}\leq C_{r,3} h^{k+1} (\|\boldsymbol p\|_{k+1}+\|{\rm
div}\boldsymbol p\|_{k+1}+\|u\|_{k+1}),
\end{equation}
where $ C_{r,3}$ is independent of both $h$ and $\rho$.
\end{corollary}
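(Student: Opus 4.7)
The plan is to combine the quasi-optimal estimate of Theorem \ref{Error:uniform_HDG:mixed} with standard approximation properties of the Raviart-Thomas element $\boldsymbol{Q}_h^{k,RT}$ together with the piecewise polynomial spaces $V_h^k$ and $\widehat{V}_h^k$. Since Theorem \ref{Error:uniform_HDG:mixed} already bounds the error by $\inf_{\boldsymbol{q}_h,\tilde v_h}(\|\boldsymbol{p}-\boldsymbol{q}_h\|_{{\rm div},\rho,h} + \|u-\tilde v_h\|_{0,\rho,h})$ with a $\rho$-independent constant $C_{e,3}$, the task reduces to exhibiting one concrete choice of interpolants and bounding each of the four pieces appearing in the two parameter-dependent norms.

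The natural choice is $\boldsymbol{q}_h := \Pi_h^{RT}\boldsymbol{p}$, the canonical Raviart-Thomas interpolant, together with $\tilde v_h := (P_h u,\, \hat P_h u)$, where $P_h\colon L^2(\Omega)\to V_h^k$ and $\hat P_h\colon L^2(\mathcal E_h)\to \widehat V_h^k$ are the respective $L^2$ projections. The decisive observation is that the jump contribution
$$\rho^{-1}\sum_{e\in\mathcal E_h^i}h_e^{-1}\langle \hat P_e([\boldsymbol{p}-\Pi_h^{RT}\boldsymbol{p}]),\hat P_e([\boldsymbol{p}-\Pi_h^{RT}\boldsymbol{p}])\rangle_e$$
vanishes identically, because both $\boldsymbol{p}\in \boldsymbol H({\rm div};\Omega)$ and its Raviart-Thomas interpolant have continuous normal components across every interior edge, so $[\boldsymbol{p}-\Pi_h^{RT}\boldsymbol{p}]=0$. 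This is the step that removes the potentially dangerous $\rho^{-1}$ factor; the remainder is routine.

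With the jump term gone, I would bound the surviving contributions by standard estimates: $\|\boldsymbol{p}-\Pi_h^{RT}\boldsymbol{p}\|_0\lesssim h^{k+1}\|\boldsymbol{p}\|_{k+1}$ for the $L^2$ part of the flux error; the identity ${\rm div}\,\Pi_h^{RT}\boldsymbol{p}=P_h\,{\rm div}\,\boldsymbol{p}$ (the commuting diagram property of RT) together with polynomial approximation to give $\|{\rm div}(\boldsymbol{p}-\Pi_h^{RT}\boldsymbol{p})\|_0\lesssim h^{k+1}\|{\rm div}\,\boldsymbol{p}\|_{k+1}$; the bound $\|u-P_hu\|_0\lesssim h^{k+1}\|u\|_{k+1}$ on the volume piece of $\|u-\tilde v_h\|_{0,\rho,h}$; and finally the edge piece $\rho\sum_e h_e\|u-\hat P_h u\|_{0,e}^2 \lesssim \rho\, h^{2(k+1)}\|u\|_{k+1}^2 \le h^{2(k+1)}\|u\|_{k+1}^2$, where the last step uses $\rho\le 1$ and a trace-type approximation estimate.

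The only subtle point is handling the $\rho^{-1}$ weight, which would spoil the $\rho$-uniformity for any non-$\boldsymbol H({\rm div})$-conforming interpolant; the RT choice sidesteps this completely. Once this is recognized, summing the four contributions and absorbing the $\rho\le 1$ factor yields the desired bound $C_{r,3}\,h^{k+1}(\|\boldsymbol{p}\|_{k+1}+\|{\rm div}\,\boldsymbol{p}\|_{k+1}+\|u\|_{k+1})$ with a constant independent of both $h$ and $\rho$, completing the proof.
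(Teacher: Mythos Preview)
Your proposal is correct and follows precisely the natural route: invoke the quasi-optimal estimate of Theorem~\ref{Error:uniform_HDG:mixed}, then evaluate the infimum at the Raviart--Thomas interpolant $\Pi_h^{RT}\boldsymbol{p}$ together with the $L^2$ projections $(P_h u,\hat P_h u)$. The key insight you identify---that the $\rho^{-1}$-weighted jump term vanishes because $\Pi_h^{RT}\boldsymbol{p}\in\boldsymbol H({\rm div};\Omega)$ has continuous normal traces---is exactly what makes the estimate uniform in $\rho$, and the remaining four pieces are handled by standard RT approximation, the commuting-diagram identity ${\rm div}\,\Pi_h^{RT}\boldsymbol{p}=P_h\,{\rm div}\,\boldsymbol{p}$, and trace/projection bounds.

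The paper itself does not give an in-text proof of this corollary; it defers the details to the cited references \cite{hong2017uniformly}. Your argument is the expected one and matches what those references carry out.
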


\subsection{Gradient-based uniform inf-sup condition}

Next, we shall present the well-posedness of \comments{stabilized hybrid mixed (HDG)} methods under another
pair of parameter-dependent norms.  We choose
$\tau = \rho^{-1}h_K^{-1}$ in \eqref{definition:ahbhch} and define   
for any $\boldsymbol q_h\in \boldsymbol Q_h $ and $\tilde v_h \in
\widetilde V_h$
\begin{equation} \label{equ:HDG-norms-grad}
\begin{aligned}
\|\boldsymbol q_h\|^2 &= (c\boldsymbol q_h,\boldsymbol q_h)_{\mathcal
T_h} \\
\|\tilde v_h\|^2_{\tilde{1},\rho,h} &= (\nabla_h v_h,\nabla_h v_h)_{\mathcal
T_h}+\rho^{-1} \sum\limits_{K\in \mathcal{T}_h}h_K^{-1}\langle \hat
P_h(v_h)-\hat{v}_h, \hat P_h(v_h)-\hat{v}_h\rangle_{\partial K},
\end{aligned}
\end{equation}
where $\hat P_h$ is either an identity operator or a local projection as
illustrated in \eqref{Local:projection}. A straightforward calculation
shows that 
$$
\langle \hat P_h(v_h)-\hat{v}_h, \hat
P_h(v_h)-\hat{v}_h\rangle_{\partial {\mathcal T_h}}=2\langle\{\hat
P_h(v_h)-\hat{v}_h\}, \{\hat
P_h(v_h)-\hat{v}_h\}\rangle_{\mathcal{E}_h}+ \frac{1}{2}\langle
\lbrack\!\lbrack \hat P_h(v_h)- \hat{v}_h\rbrack\!\rbrack,
\lbrack\!\lbrack \hat
P_h(v_h)-\hat{v}_h\rbrack\!\rbrack\rangle_{\mathcal{E}_h}.
$$ 
Hence, if $\hat P_h$ is the identity operator, then $\|\tilde
v_h\|_{1,\rho,h}$ is indeed a norm on $\widetilde V_h$.
Moreover, if $\hat P_h$ is the local projection defined in
\eqref{Local:projection}, then $\|\tilde v_h\|_{\tilde{1},\rho,h}$ is
indeed a norm on $\widetilde V_h$ when $\widehat V_h^0\subset \widehat
V_h$, i.e. $\widehat V_h$ contains the piecewise constant space on
$\mathcal E_h$. 

We have the uniform inf-sup condition for the following formulation
\begin{equation}\label{compact:form:HDG}
A_h((\boldsymbol{p}_h, \tilde u_h),(\boldsymbol{q}_h, \tilde
v_h))=a_h(\boldsymbol{p}_h,\boldsymbol{q}_h)+b_h(\boldsymbol{
q}_h,\tilde {u}_h)+b_h(\boldsymbol{p}_h, \tilde v_h)+c_h(
\tilde{u}_h, \tilde{v}_h).
\end{equation}

\begin{theorem}[\cite{hong2017uniformly,hong2018extended}]
Assume that $\nabla_h V_h \subset \boldsymbol Q_h$. Then there exists a
positive constant $\rho_0$ which only depends on the shape regularity
of the mesh, such that for any $0<\rho \leq\rho_0$, we have
\begin{equation}
\inf_{(\boldsymbol{p}_h, \tilde u_h)\in \boldsymbol{Q}_h\times
\widetilde V_h} \sup_{\boldsymbol{q}_h, \tilde v_h\in
\boldsymbol{Q}_h\times \widetilde V_h}   \frac{A_h((\boldsymbol{p}_h,
\tilde u_h),(\boldsymbol{q}_h, \tilde v_h))}{(\|{\tilde
u}_h\|_{\tilde 1,\rho,h}+\|\boldsymbol{p}_h\|)(\|{\tilde
v}_h\|_{\tilde 1,\rho,h}+\|\boldsymbol{q}_h\|)}    \geq \beta_1,
\end{equation}
where $\beta_1$ is independent of both $\rho$ and mesh size $h$.
\end{theorem}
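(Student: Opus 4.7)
The plan is to establish the inf-sup condition by testing against a suitable perturbation of the natural diagonal pair. Given any $(\boldsymbol p_h, \tilde u_h) \in \boldsymbol Q_h \times \widetilde V_h$, I would first observe that the diagonal choice $(\boldsymbol q_h, \tilde v_h) = (\boldsymbol p_h, -\tilde u_h)$ yields
\[
A_h((\boldsymbol p_h, \tilde u_h), (\boldsymbol p_h, -\tilde u_h)) = \|\boldsymbol p_h\|^2 + \rho^{-1}\sum_{K \in \mathcal T_h} h_K^{-1} \|\hat P_h(u_h) - \hat u_h\|^2_{0,\partial K},
\]
which controls $\|\boldsymbol p_h\|^2$ and the stabilization part of $\|\tilde u_h\|^2_{\tilde 1, \rho, h}$ but misses the $\|\nabla_h u_h\|^2$ piece entirely. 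To recover it, I would exploit the hypothesis $\nabla_h V_h \subset \boldsymbol Q_h$ and instead test with $(\boldsymbol q_h, \tilde v_h) = (\boldsymbol p_h + \delta \nabla_h u_h, -\tilde u_h)$ for a small $\delta > 0$ to be fixed later.

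After the cancellation $b_h(\boldsymbol p_h, \tilde u_h) + b_h(\boldsymbol p_h, -\tilde u_h) = 0$, the form reduces to $a_h(\boldsymbol p_h, \boldsymbol p_h) + \delta a_h(\boldsymbol p_h, \nabla_h u_h) + \delta b_h(\nabla_h u_h, \tilde u_h) - c_h(\tilde u_h, \tilde u_h)$. Elementwise integration by parts then gives
\[
b_h(\nabla_h u_h, \tilde u_h) = \|\nabla_h u_h\|^2_{\mathcal T_h} - \langle u_h - \hat u_h, \nabla_h u_h \cdot \boldsymbol n\rangle_{\partial \mathcal T_h},
\]
so that the missing $\|\nabla_h u_h\|^2$ appears explicitly, at the price of a single interface cross term.

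Next I would estimate the two new cross terms. For $\delta a_h(\boldsymbol p_h, \nabla_h u_h)$, Young's inequality plus the boundedness of $c$ yields the bound $\tfrac14\|\boldsymbol p_h\|^2 + C\delta^2\|\nabla_h u_h\|^2$. For the interface term, after splitting $u_h - \hat u_h = (u_h - \hat P_h(u_h)) + (\hat P_h(u_h) - \hat u_h)$ and discarding the first summand (identically zero when $\hat P_h$ is the identity, or $L^2$-orthogonal to $\nabla_h u_h \cdot \boldsymbol n_K \in \widehat Q(\partial K)$ in the local-projection case), Cauchy--Schwarz combined with the discrete inverse trace inequality $h_K^{1/2}\|\nabla u_h \cdot \boldsymbol n\|_{0,\partial K} \lesssim \|\nabla u_h\|_{0,K}$ (with constant depending only on shape regularity) gives
\[
\delta |\langle \hat P_h(u_h) - \hat u_h, \nabla_h u_h \cdot \boldsymbol n\rangle_{\partial \mathcal T_h}| \lesssim \delta \rho^{1/2} \|\nabla_h u_h\| \cdot \|\tilde u_h\|_{\tilde 1, \rho, h}.
\]
Invoking the identity $-c_h(\tilde u_h, \tilde u_h) = \rho^{-1}\sum_K h_K^{-1}\|\hat P_h(u_h) - \hat u_h\|^2_{\partial K}$, picking $\delta$ small (to absorb $C\delta^2\|\nabla_h u_h\|^2$ into $\delta\|\nabla_h u_h\|^2$) and then $\rho_0$ small enough (to absorb the remaining $\rho^{1/2}$ cross term via AM--GM), I would conclude $A_h \gtrsim \|\boldsymbol p_h\|^2 + \|\tilde u_h\|^2_{\tilde 1, \rho, h}$.

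Continuity of the test pair is immediate: $\|\boldsymbol p_h + \delta \nabla_h u_h\| + \|\tilde u_h\|_{\tilde 1, \rho, h} \lesssim \|\boldsymbol p_h\| + \|\tilde u_h\|_{\tilde 1, \rho, h}$, which yields the inf-sup bound with $\beta_1$ depending only on $\delta$, the inverse trace constant, and the bounds of $c$. The main obstacle is the delicate balancing of $\delta$ against $\rho$: the recovered $\|\nabla_h u_h\|^2$ term enters with small coefficient $\delta$, while the harmful interface cross term scales like $\delta \rho^{1/2}\|\nabla_h u_h\|\|\tilde u_h\|_{\tilde 1, \rho, h}$, and can be absorbed only when $\rho \leq \rho_0$ with $\rho_0$ fixed by (and decreasing in) the shape-regularity constant from the inverse trace inequality. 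This is precisely why, in contrast with the divergence-based bounds of Theorems~\ref{uniform:WG:2} and \ref{uniform:inf-sup:bh}, uniformity holds only on the restricted range $\rho \in (0,\rho_0]$.
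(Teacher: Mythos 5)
The paper does not actually prove this theorem in the text; it is quoted from the companion works \cite{hong2017uniformly, hong2018extended}, so there is no in-house argument to compare against. On its own merits, your construction --- testing with $(\boldsymbol q_h,\tilde v_h)=(\boldsymbol p_h+\delta\nabla_h u_h,\,-\tilde u_h)$, admissible precisely because $\nabla_h V_h\subset\boldsymbol Q_h$ --- is the standard route for gradient-based inf-sup conditions of this type, and the computation checks out: the $b_h$ terms cancel except for $\delta\, b_h(\nabla_h u_h,\tilde u_h)$; the sign convention gives $c_h(\tilde u_h,-\tilde u_h)=+\tau\|\hat P_h(u_h)-\hat u_h\|^2_{0,\partial\mathcal T_h}$, which is exactly the stabilization half of $\|\tilde u_h\|^2_{\tilde 1,\rho,h}$; your integration-by-parts identity for $b_h(\nabla_h u_h,\tilde u_h)$ is correct; and the two cross terms are absorbed by Young's inequality, the inverse trace inequality, and the smallness of $\delta$ and $\rho$, yielding $A_h\gtrsim \|\boldsymbol p_h\|^2+\|\tilde u_h\|^2_{\tilde 1,\rho,h}$ together with the trivial continuity of the test pair. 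This gives the claimed inf-sup bound.

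Two caveats. First, discarding $\langle u_h-\hat P_h(u_h),\nabla_h u_h\cdot\boldsymbol n\rangle_{\partial\mathcal T_h}$ in the local-projection case requires $\nabla_h u_h\cdot\boldsymbol n_K|_{\partial K}$ to lie in the projection's target space $\widehat Q(\partial K)$; this holds for the natural reduced-stabilization pairing of Corollary \ref{Error:uniform_ModiHDG:primal:rate} ($V_h^{k+1}$, $\boldsymbol Q_h^k$, $\widehat V_h^k$), but it is an assumption beyond $\nabla_h V_h\subset\boldsymbol Q_h$ and should be stated explicitly, since the norm $\|\cdot\|_{\tilde 1,\rho,h}$ gives you no other handle on $u_h-\hat P_h(u_h)$ on element boundaries. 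Second, your final absorption is slightly suboptimal in the quantifier order: writing the interface cross term as $C\delta\rho^{1/2}S\|\nabla_h u_h\|$ with $S^2=\rho^{-1}\sum_K h_K^{-1}\|\hat P_h(u_h)-\hat u_h\|^2_{0,\partial K}$ and applying Young's inequality against $\tfrac12 S^2$ (which appears with coefficient one from $-c_h$) leaves only $\tfrac{C^2\delta^2\rho}{2}\|\nabla_h u_h\|^2$, which is absorbed by the same smallness of $\delta$ already needed for the $a_h(\boldsymbol p_h,\nabla_h u_h)$ term, for every $0<\rho\le 1$; pairing the cross term against $\|\tilde u_h\|^2_{\tilde 1,\rho,h}$ wholesale, as you do, forces $\rho_0$ to depend on $\delta$ and hence on the bounds of $c$, not only on shape regularity as the theorem asserts. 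Neither point invalidates the argument, but both deserve a sentence.
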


\begin{corollary} [\cite{hong2017uniformly, hong2018extended}] \label{uniform_stable}
Assume that $\nabla_h V_h \subset \boldsymbol Q_h$. Then there exists
a unique solution $(\boldsymbol{p}_h,  \tilde{u}_h)\in \boldsymbol
Q_h\times \widetilde{V}_h$ satisfying \eqref{Stabilizedpweak} with
$\tau=\rho^{-1}h_K^{-1}$. Further, there exists a positive constant
$\rho_0$ such that for any $0<\rho\leq \rho_0$ the following estimate
holds
\begin{equation}
\|\boldsymbol{p}_h\|+\|\tilde u_h\|_{\tilde{1},\rho,h}\leq C_{d,2}
\|f\|_{-\tilde{1},\rho,h},
\end{equation}
where $C_{d,2}$ is a uniform constant with respect to both $\rho$ and
$h$ and $ \|f\|_{-\tilde{1},\rho,h}=\sup\limits_{\tilde v_h \in \widetilde
  {V}_h}\frac{(f, v_h)_{\mathcal T_h}}{\|\tilde v_h\|_{\tilde{1},\rho,h}}$.
\end{corollary}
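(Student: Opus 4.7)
The plan is to obtain the corollary as an immediate consequence of the preceding uniform inf-sup theorem for the compact bilinear form $A_h$ defined in \eqref{compact:form:HDG}. First I would rewrite the two equations of \eqref{Stabilizedpweak} as the single saddle-point problem: find $(\boldsymbol p_h, \tilde u_h) \in \boldsymbol Q_h \times \widetilde V_h$ such that
\begin{equation*}
A_h((\boldsymbol p_h, \tilde u_h),(\boldsymbol q_h, \tilde v_h)) = -(f, v_h)_{\mathcal T_h}
\qquad \forall (\boldsymbol q_h, \tilde v_h) \in \boldsymbol Q_h \times \widetilde V_h,
\end{equation*}
which is obtained by simply adding the two equations tested against $(\boldsymbol q_h, \tilde v_h)$. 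Under the assumption $\nabla_h V_h \subset \boldsymbol Q_h$ and with $\tau = \rho^{-1} h_K^{-1}$, the preceding theorem provides a threshold $\rho_0 > 0$ so that for all $\rho \in (0, \rho_0]$ the form $A_h$ satisfies the uniform inf-sup condition with a constant $\beta_1$ independent of $\rho$ and $h$. Since the problem is finite dimensional and square (same number of trial and test degrees of freedom), a strictly positive inf-sup constant gives uniqueness, hence existence, of the discrete solution $(\boldsymbol p_h, \tilde u_h)$.

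For the a priori bound, I would apply the inf-sup condition to the computed solution and substitute the variational identity:
\begin{equation*}
\beta_1\bigl(\|\boldsymbol p_h\| + \|\tilde u_h\|_{\tilde{1},\rho,h}\bigr)
\leq \sup_{(\boldsymbol q_h, \tilde v_h)} \frac{A_h((\boldsymbol p_h, \tilde u_h),(\boldsymbol q_h, \tilde v_h))}{\|\boldsymbol q_h\| + \|\tilde v_h\|_{\tilde{1},\rho,h}}
= \sup_{(\boldsymbol q_h, \tilde v_h)} \frac{-(f, v_h)_{\mathcal T_h}}{\|\boldsymbol q_h\| + \|\tilde v_h\|_{\tilde{1},\rho,h}}.
\end{equation*}
Because $\|\tilde v_h\|_{\tilde{1},\rho,h} \leq \|\boldsymbol q_h\| + \|\tilde v_h\|_{\tilde{1},\rho,h}$ and $(f,v_h)_{\mathcal T_h}$ does not involve $\boldsymbol q_h$, the right-hand side is controlled by $\|f\|_{-\tilde{1},\rho,h}$ as defined in the statement. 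Setting $C_{d,2} = \beta_1^{-1}$ then yields the claimed bound, with uniformity in $\rho$ and $h$ inherited directly from the inf-sup theorem.

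\textbf{Main obstacle.} Essentially all of the analytic effort is packaged in the inf-sup theorem that immediately precedes the corollary, so the present statement is a routine deduction. The only subtlety worth flagging is the mild asymmetry between the product-norm denominator appearing in the inf-sup and the functional $\|f\|_{-\tilde{1},\rho,h}$, which is defined as a supremum only over $\tilde v_h$; the elementary comparison above reconciles them with no loss. Once that is noted, existence, uniqueness, and the stability estimate all fall out together.
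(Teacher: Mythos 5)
Your proposal is correct and follows exactly the route the paper intends: the statement is presented as a corollary of the preceding uniform inf-sup theorem for $A_h$ (the paper itself defers the details to the cited references), and your reduction of \eqref{Stabilizedpweak} to the compact form, the finite-dimensional existence/uniqueness argument, and the comparison between the product-norm denominator and $\|f\|_{-\tilde{1},\rho,h}$ are all sound. No gaps.
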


From the above corollary and the discrete Poincar{\'e}--Friedrichs
inequalities for piecewise $H^1$ functions \cite{brenner2003poincare},
i.e., $\|v_h\|\lesssim \|\nabla_h v_h\|+\sum\limits_{e\in
\mathcal{E}_h}h^{-1}_e\|\lbrack\!\lbrack v_h\rbrack\!\rbrack\|_{0,e}$,
we further have $\|\boldsymbol{p}_h\|+\|\tilde u_h\|_{\tilde 1,\rho,h}\leq
C_{d,3} \|f_h\|$.  The stability and consistency results of the \comments{stabilized hybrid mixed (HDG)}
methods lead to the following quasi-optimal approximation.

\begin{theorem}[\cite{hong2017uniformly}]\label{Error:uniform_ModiHDG:Primal}
Let $(\boldsymbol p,  u)$ be the solution of \eqref{H1} and
$(\boldsymbol{p}_h, \tilde{u}_h)\in \boldsymbol {Q}_h\times
\widetilde{V}_h$ be the solution of \eqref{Stabilizedpweak} with $\tau=\rho^{-1}
h^{-1}_K$. If we choose the spaces $\boldsymbol {Q}_h\times
\widetilde{V}_h$ such that $\nabla_h V_h\subset \boldsymbol Q_h$, then there
exists a constant $\rho_0$ such that for any $0<\rho\leq \rho_0$ the
following estimate holds
\begin{equation}
\|\boldsymbol p-\boldsymbol{p}_h\|+\|u-\tilde {u}_h\|_{\tilde 1,\rho,h}\leq
C_{e,4} \inf\limits_{\boldsymbol{q}_h\in \boldsymbol{Q}_h, \tilde
{v}_h\in \widetilde {V}_h } \left( \|\boldsymbol p-\boldsymbol{
q}_h\|+\|u-\tilde {v}_h\|_{\tilde 1,\rho,h}\right),
\end{equation}
where $C_{e,4} $ is a uniform constant with respect to both $\rho$ and $h$.
\end{theorem}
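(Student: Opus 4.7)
The plan is to establish a discrete Galerkin orthogonality, apply the uniform inf-sup condition from the preceding theorem to the discrete error, and close the argument with continuity of the composite bilinear form $A_h$ in the gradient-based parameter-dependent norms. Since $(\boldsymbol p, u)$ solves \eqref{H1}, the consistency Lemma \ref{Error:common:HDG} can be rewritten in the compact form \eqref{compact:form:HDG} as $A_h((\boldsymbol p, u),(\boldsymbol q_h, \tilde v_h)) = -(f, v_h)_{\mathcal T_h}$, where $u$ is identified with the pair $(u, u|_{\mathcal E_h})$, and the jump contribution in the stabilization $c_h$ term vanishes because $u$ is continuous across $\mathcal E_h$. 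Subtracting from the discrete equation \eqref{Stabilizedpweak} yields the Galerkin orthogonality
\begin{equation*}
A_h\bigl((\boldsymbol p - \boldsymbol p_h,\, u - \tilde u_h),\,(\boldsymbol q_h,\tilde w_h)\bigr) = 0 \qquad \forall\,(\boldsymbol q_h,\tilde w_h) \in \boldsymbol Q_h \times \widetilde V_h.
\end{equation*}

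Next, I fix an arbitrary pair $(\boldsymbol q_h,\tilde v_h) \in \boldsymbol Q_h \times \widetilde V_h$ and split the error by triangle inequality into $(\boldsymbol p - \boldsymbol q_h,\, u - \tilde v_h)$, which is the approximation part to be minimized, and the discrete residual $(\boldsymbol p_h - \boldsymbol q_h,\, \tilde u_h - \tilde v_h) \in \boldsymbol Q_h \times \widetilde V_h$. Applying the uniform inf-sup condition (from the theorem preceding this one), valid for $0 < \rho \leq \rho_0$ under the hypothesis $\nabla_h V_h \subset \boldsymbol Q_h$, to this discrete residual gives
\begin{equation*}
\beta_1\bigl(\|\boldsymbol p_h - \boldsymbol q_h\| + \|\tilde u_h - \tilde v_h\|_{\tilde 1,\rho,h}\bigr) \leq \sup_{(\boldsymbol r_h,\tilde w_h)}\frac{A_h\bigl((\boldsymbol p_h - \boldsymbol q_h,\tilde u_h - \tilde v_h),(\boldsymbol r_h,\tilde w_h)\bigr)}{\|\boldsymbol r_h\| + \|\tilde w_h\|_{\tilde 1,\rho,h}}.
\end{equation*}
Galerkin orthogonality rewrites the numerator as $A_h((\boldsymbol p - \boldsymbol q_h,\, u - \tilde v_h),(\boldsymbol r_h,\tilde w_h))$, which now involves only the approximation error and a discrete test function.

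The remaining task is to establish continuity of $A_h$ uniformly in $\rho$ and $h$:
\begin{equation*}
|A_h((\boldsymbol p - \boldsymbol q_h,\, u - \tilde v_h),(\boldsymbol r_h,\tilde w_h))| \leq C\bigl(\|\boldsymbol p - \boldsymbol q_h\| + \|u - \tilde v_h\|_{\tilde 1,\rho,h}\bigr)\bigl(\|\boldsymbol r_h\| + \|\tilde w_h\|_{\tilde 1,\rho,h}\bigr).
\end{equation*}
The $a_h$-contribution follows immediately from Cauchy--Schwarz and the boundedness of $c$. For the two $b_h$-contributions I integrate by parts element-wise to write $b_h(\boldsymbol r_h, u - \tilde v_h) = -(\nabla_h(u - v_h), \boldsymbol r_h)_{\mathcal T_h} + \langle \hat P_h(u - v_h) - (u - \hat v_h), \boldsymbol r_h \cdot \boldsymbol n\rangle_{\partial \mathcal T_h}$ (the analogous identity holds for the other $b_h$ term), then bound the boundary term via the Cauchy--Schwarz inequality paired with the $\rho^{-1}h_K^{-1}$ weight in $\|\cdot\|_{\tilde 1,\rho,h}$ and a discrete trace inequality $\|\boldsymbol r_h \cdot \boldsymbol n\|_{0,\partial K}^2 \lesssim h_K^{-1}\|\boldsymbol r_h\|_{0,K}^2$; the weight $h_K^{-1}$ cancels exactly against the inverse weight from the norm. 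The $c_h$-contribution is controlled directly because $c_h(u - \tilde v_h, \tilde w_h)$ reduces to $\langle \tau(\hat P_h(u - v_h) - (u - \hat v_h)), \hat P_h w_h - \hat w_h\rangle_{\partial \mathcal T_h}$, and the choice $\tau = \rho^{-1}h_K^{-1}$ is precisely matched to the penalty norm.

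Combining the inf-sup estimate, Galerkin orthogonality, and the continuity bound yields
$\|\boldsymbol p_h - \boldsymbol q_h\| + \|\tilde u_h - \tilde v_h\|_{\tilde 1,\rho,h} \lesssim \|\boldsymbol p - \boldsymbol q_h\| + \|u - \tilde v_h\|_{\tilde 1,\rho,h}$, and a final triangle inequality followed by taking the infimum over $(\boldsymbol q_h,\tilde v_h)$ delivers the claimed quasi-optimality with a $\rho$- and $h$-independent constant $C_{e,4}$. The main obstacle is the continuity step: the cross-coupling of $b_h$ and $c_h$ with the asymmetric choice of norms $\|\cdot\|$ on $\boldsymbol Q_h$ versus $\|\cdot\|_{\tilde 1,\rho,h}$ on $\widetilde V_h$ must be carefully balanced so that every appearance of $\rho$ and $h_K$ on one side is absorbed by a matching weight on the other, which is where the specific scaling $\tau = \rho^{-1}h_K^{-1}$ becomes essential.
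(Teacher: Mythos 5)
Your overall architecture --- consistency (Lemma \ref{Error:common:HDG}) giving Galerkin orthogonality for $A_h$, the uniform inf-sup applied to the discrete part of the error, boundedness of $A_h$, and a final triangle inequality --- is exactly the route the paper intends (it states that ``the stability and consistency results \dots lead to the following quasi-optimal approximation'' and defers the details to the cited reference). However, there is a genuine gap in your continuity step, and it sits at precisely the point you wave through with ``the analogous identity holds for the other $b_h$ term.'' In the numerator $A_h((\boldsymbol p-\boldsymbol q_h,\,u-\tilde v_h),(\boldsymbol r_h,\tilde w_h))$ the two $b_h$ terms are not symmetric in the relevant sense: in $b_h(\boldsymbol r_h,\,u-\tilde v_h)$ the flux slot carries the \emph{discrete} function $\boldsymbol r_h$, so the inverse trace inequality $\|\boldsymbol r_h\cdot\boldsymbol n\|_{0,\partial K}^2\lesssim h_K^{-1}\|\boldsymbol r_h\|_{0,K}^2$ is legitimate; but in $b_h(\boldsymbol p-\boldsymbol q_h,\,\tilde w_h)$ the flux slot carries $\boldsymbol p-\boldsymbol q_h$, which is not a piecewise polynomial, and no such trace inequality holds for a general $\boldsymbol L^2$ (or even $\boldsymbol H({\rm div})$) function. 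After element-wise integration by parts the offending term is $\langle w_h-\hat w_h,\,(\boldsymbol p-\boldsymbol q_h)\cdot\boldsymbol n\rangle_{\partial\mathcal T_h}$, and $\sum_K h_K\|(\boldsymbol p-\boldsymbol q_h)\cdot\boldsymbol n\|_{0,\partial K}^2$ is simply not controlled by $\|\boldsymbol p-\boldsymbol q_h\|^2$: one can make the volume norm arbitrarily small while the boundary trace stays large. Consequently the uniform continuity bound you assert, with only $\|\boldsymbol p-\boldsymbol q_h\|$ on the right, is false, and the quasi-optimality with an infimum over \emph{all} $\boldsymbol q_h\in\boldsymbol Q_h$ in the plain $L^2$-type norm does not follow from ``inf-sup plus boundedness'' alone.

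This is the well-known hard step in HDG error analysis, and it is usually resolved in one of two ways: either (a) one replaces the arbitrary $\boldsymbol q_h$ by a tailored interpolant $\boldsymbol\Pi\boldsymbol p$ whose normal-trace error is orthogonal on each face to the quantities $w_h-\hat w_h$ (in the spirit of the projection-based analysis of Cockburn, Gopalakrishnan and Sayas \cite{cockburn2010projection}, cited in this very section), so that the boundary term vanishes or is absorbed, and one then has to justify separately why the resulting bound can still be written as an infimum; or (b) one augments the norm measuring the flux approximation error by the weighted boundary term $\big(\sum_K h_K\|(\boldsymbol p-\boldsymbol q_h)\cdot\boldsymbol n\|_{0,\partial K}^2\big)^{1/2}$, which changes the statement of the theorem. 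As written, your proof needs one of these devices (or an argument exploiting the extra regularity $\boldsymbol p\in\boldsymbol H^1(\Omega)$ together with a scaled continuous trace inequality, which again modifies the right-hand side); without it, the continuity estimate --- which you yourself identify as ``the main obstacle'' --- is not established.
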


\begin{corollary}[\cite{hong2017uniformly}] \label{Error:uniform_HDG:primal:rate}
Let $(\boldsymbol p,  u)$ be the solution of \eqref{H1} and $\boldsymbol
p\in \boldsymbol H^{k+1}(\Omega), u\in H^{k+2}(\Omega) $,
$(\boldsymbol{p}_h,  \tilde{u}_h)\in \boldsymbol {Q}_h\times
\widetilde{V}_h$ be the solution of \eqref{Stabilizedpweak} with
$\tau=\rho^{-1}h_K^{-1}$, and $\hat P_h$ be an identity operator. If
we choose the spaces $V_h \times \boldsymbol Q_h\times \widehat V_h$
as $V^{k+1}_h\times\boldsymbol Q^k_h\times \hat V^{k+1}_h$, then the
following estimate holds 
\begin{equation}
\|\boldsymbol p-\boldsymbol{p}_h\|+\|u-\tilde {u}_h\|_{\tilde 1,\rho,h}\leq
C_{r,4} h^{k+1} (\|\boldsymbol p\|_{k+1}+\|u\|_{k+2}),
\end{equation}
where $ C_{r,4}$ is independent of both $h$ and $\rho$.
\end{corollary}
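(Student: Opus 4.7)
The plan is to obtain the convergence rate as a direct consequence of the quasi-optimal approximation result in Theorem \ref{Error:uniform_ModiHDG:Primal}, combined with standard polynomial approximation estimates, where the main care is in choosing compatible interpolants for $(v_h,\hat v_h)$ so that the stabilization part of the $\tilde 1,\rho,h$-norm does not introduce a $\rho^{-1}$ factor.

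First, I would invoke Theorem \ref{Error:uniform_ModiHDG:Primal}, which is applicable since we have the spaces $V^{k+1}_h\times \boldsymbol Q^k_h$ with $\nabla_h V^{k+1}_h\subset \boldsymbol Q^k_h$. This reduces the task to bounding
$$
\inf_{\boldsymbol q_h\in \boldsymbol Q^k_h,\,\tilde v_h\in V^{k+1}_h\times \widehat V^{k+1}_h} \Bigl(\|\boldsymbol p - \boldsymbol q_h\| + \|\tilde u - \tilde v_h\|_{\tilde 1,\rho,h}\Bigr),
$$
where the exact solution is viewed as $\tilde u = (u,\,u|_{\mathcal E_h})$; this is legitimate because $u\in H^{k+2}(\Omega)\subset H^2(\Omega)$ has well-defined traces on every edge.

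Second, I would choose concrete interpolants. For the flux, take $\boldsymbol q_h = \Pi^k_Q \boldsymbol p$, the elementwise $L^2$ projection onto $\boldsymbol Q^k_h$; standard approximation theory yields $\|\boldsymbol p - \Pi^k_Q \boldsymbol p\|\lesssim h^{k+1}\|\boldsymbol p\|_{k+1}$. For the primal part, let $v_h\in V^{k+1}_h$ be a \emph{continuous} polynomial interpolant of $u$ (e.g. the Scott--Zhang interpolant, well defined since $u\in H^{k+2}(\Omega)$), and then set $\hat v_h := v_h|_{\mathcal E_h}$, which is single-valued and lies in $\widehat V^{k+1}_h$ since $V^{k+1}_h|_e\subset \widehat V^{k+1}_h$. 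Standard theory gives the broken seminorm estimate $\|\nabla_h(u - v_h)\|\lesssim h^{k+1}\|u\|_{k+2}$.

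Third, I would evaluate the stabilization contribution. Because $\hat P_h$ is the identity, the boundary term in $\|\tilde u - \tilde v_h\|^2_{\tilde 1,\rho,h}$ is
$$
\rho^{-1}\sum_{K\in\mathcal T_h} h_K^{-1}\bigl\langle (v_h - u)-(\hat v_h - u|_{\mathcal E_h}),\; (v_h - u)-(\hat v_h - u|_{\mathcal E_h})\bigr\rangle_{\partial K} = \rho^{-1}\sum_{K\in\mathcal T_h} h_K^{-1}\|v_h - \hat v_h\|^2_{\partial K},
$$
which vanishes identically by the choice $\hat v_h = v_h|_{\mathcal E_h}$. Thus $\|\tilde u - \tilde v_h\|_{\tilde 1,\rho,h} = \|\nabla_h(u - v_h)\|\lesssim h^{k+1}\|u\|_{k+2}$, uniformly in $\rho$, and adding the flux estimate completes the proof.

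The main obstacle, and the reason for insisting on a continuous interpolant, is exactly the uniformity in $\rho$. If one were to use a discontinuous $L^2$ projection for $v_h$ and, say, the $L^2$ projection onto $\widehat V^{k+1}_h$ for $\hat v_h$, the jump $v_h - \hat v_h$ on $\partial K$ would be only $O(h^{k+3/2})$ in $L^2(\partial K)$, producing a term of size $\rho^{-1}h^{k+1}$ that blows up as $\rho\to 0$. The Scott--Zhang (or Lagrange) choice makes $v_h - \hat v_h \equiv 0$ on $\mathcal E_h$, so the stabilization term is eliminated exactly, yielding the stated $\rho$-independent constant $C_{r,4}$.
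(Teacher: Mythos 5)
Your proposal is correct and follows the route the paper intends: the corollary is exactly Theorem \ref{Error:uniform_ModiHDG:Primal} combined with an $L^2$ projection for $\boldsymbol p$ and a conforming interpolant $v_h$ of $u$ with $\hat v_h = v_h|_{\mathcal E_h}$, so that the $\rho^{-1}$-weighted stabilization term vanishes identically and the constant is uniform in $\rho$. The only detail worth making explicit is that $\widehat V_h^{k+1}$ requires $\hat v_h|_{\mathcal E_h^\partial}=0$, so the Scott--Zhang interpolant must be taken in its boundary-condition-preserving form (legitimate since $u\in H_0^1(\Omega)$), after which your argument is complete.
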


\begin{corollary}[\cite{hong2017uniformly}] \label{Error:uniform_ModiHDG:primal:rate}
Let $(\boldsymbol p, u)$ be the solution of \eqref{H1} and
$\boldsymbol p\in \boldsymbol H^{k+1}(\Omega), u\in H^{k+2}(\Omega)$,
$(\boldsymbol{\tilde p}_h,  {u}_h)\in \boldsymbol {Q}_h\times
\widetilde{V}_h$ be the solution of \eqref{Stabilizedpweak} with
$\tau=\rho^{-1} h^{-1}_K$ and $\hat P_h$ be a local $L^2$ projection
illustrated in \eqref{definition:ahbhch}. If we choose the spaces
$V_h\times\boldsymbol Q_h\times \widehat V_h$ as
$V^{k+1}_h\times\boldsymbol Q^k_h\times \widehat V^{k}_h$, then the
following estimate holds 
\begin{equation}
 \|\boldsymbol p-\boldsymbol{ p}_h\|+\|u-\tilde
{u}_h\|_{\tilde{1},\rho,h}\leq C_{r,5} h^{k+1} (\|\boldsymbol
p\|_{k+1}+\|u\|_{k+2}),
\end{equation}
where $ C_{r,5}$ is independent of both $h$ and $\rho$.
\end{corollary}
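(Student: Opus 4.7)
The plan is to derive the convergence rate by combining the quasi-optimal estimate from Theorem~\ref{Error:uniform_ModiHDG:Primal} with concrete interpolation bounds on the chosen polynomial spaces. The structure is exactly parallel to the proof of Corollary~\ref{Error:uniform_HDG:primal:rate}, but the key novelty is that the edge test/trial space is one polynomial degree lower (i.e.\ $\widehat V_h^{k}$ instead of $\widehat V_h^{k+1}$), which must be compensated for by the local $L^2$ projection $\hat P_h$ appearing inside the stabilization norm.

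First I would apply Theorem~\ref{Error:uniform_ModiHDG:Primal} to reduce the problem to producing a pair $(\boldsymbol q_h,\tilde v_h)\in\boldsymbol Q_h^k\times\widetilde V_h$ (with $\widetilde V_h=V_h^{k+1}\times\widehat V_h^{k}$) for which
\begin{equation*}
\|\boldsymbol p-\boldsymbol q_h\|+\|u-\tilde v_h\|_{\tilde 1,\rho,h}\ \lesssim\ h^{k+1}\bigl(\|\boldsymbol p\|_{k+1}+\|u\|_{k+2}\bigr),
\end{equation*}
uniformly in $\rho\in(0,\rho_0]$. For the flux, I would take $\boldsymbol q_h$ to be the element-wise $L^2$-projection $\Pi_{\boldsymbol Q}^{k}\boldsymbol p\in\boldsymbol Q_h^{k}$; standard Bramble--Hilbert gives $\|\boldsymbol p-\Pi_{\boldsymbol Q}^{k}\boldsymbol p\|_{\mathcal T_h}\lesssim h^{k+1}|\boldsymbol p|_{k+1}$.

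For the scalar pair, I would set $v_h:=\Pi_V^{k+1}u$ (element-wise $L^2$-projection onto $V_h^{k+1}$) and $\hat v_h:=\hat P_h v_h$. The second choice is admissible because $\hat P_h$ maps into $\widehat Q(\partial K)$ whose restrictions to individual edges contain $\widehat V_h^{k}$. With this choice the ``stabilization'' piece of the interpolation error collapses:
\begin{equation*}
\hat P_h(u-v_h)-(u-\hat v_h) = \hat P_h u - \hat P_h v_h - u + \hat P_h v_h = \hat P_h u - u,
\end{equation*}
so no cross term between $\hat P_h v_h$ and $\hat v_h$ survives. It then remains to estimate the two pieces of $\|u-\tilde v_h\|^2_{\tilde 1,\rho,h}$: the volume piece $\|\nabla_h(u-\Pi_V^{k+1}u)\|^2_{\mathcal T_h}$ by standard approximation theory to $O(h^{2(k+1)}|u|_{k+2}^2)$, and the edge piece $\rho^{-1}\sum_K h_K^{-1}\|\hat P_h u-u\|^2_{0,\partial K}$ by a trace/approximation argument on each edge, leveraging the fact that $\hat P_h$ preserves polynomials of degree $k$ on each face and that $u\in H^{k+2}$ on each element.

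The main technical obstacle is the last estimate: to obtain the claimed $h^{k+1}$ rate uniformly in $\rho$, one must show $\|\hat P_h u-u\|_{0,\partial K}^{2}\lesssim h_K^{2k+3}\|u\|_{k+2,K}^{2}$, which after the $\rho^{-1}h_K^{-1}$ weighting yields the correct order (times $\rho^{-1/2}\le\rho_0^{-1/2}$). This is the step that exploits the ``extra'' regularity $H^{k+2}$ of $u$ (rather than just $H^{k+1}$) and must be proved via a scaled trace inequality combined with a polynomial-preserving argument: subtracting a suitable polynomial $q\in\mathcal P_{k+1}(K)$ satisfying $\|u-q\|_{0,K}\lesssim h^{k+2}|u|_{k+2,K}$ and $|u-q|_{1,K}\lesssim h^{k+1}|u|_{k+2,K}$, together with $L^2$-stability of $\hat P_h$ and orthogonality to compensate for the fact that $q|_e\notin\widehat Q(e)$ in general. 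Once this face estimate is established, the conclusion follows by summing over $K$ and combining with the flux estimate from Step~2 in the quasi-optimality bound from Theorem~\ref{Error:uniform_ModiHDG:Primal}, yielding the stated rate with a constant $C_{r,5}$ independent of $h$ and of $\rho\in(0,\rho_0]$.
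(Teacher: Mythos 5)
Your overall strategy (quasi-optimality from Theorem~\ref{Error:uniform_ModiHDG:Primal} plus an explicit interpolant) is the right one, but the specific construction breaks down at exactly the step you identify as the ``main technical obstacle,'' and it breaks in two independent ways. First, the parameter dependence: from $0<\rho\leq\rho_0$ you conclude $\rho^{-1/2}\leq\rho_0^{-1/2}$, but the inequality goes the other way --- $\rho^{-1/2}\geq\rho_0^{-1/2}$ and $\rho^{-1/2}\to\infty$ as $\rho\to 0$. Any interpolant that leaves a genuinely nonzero residual in the stabilization part of $\|\cdot\|_{\tilde 1,\rho,h}$ picks up this unbounded factor, so it cannot yield a constant $C_{r,5}$ independent of $\rho$. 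Second, the face estimate $\|\hat P_h u-u\|_{0,\partial K}^2\lesssim h_K^{2k+3}\|u\|_{k+2,K}^2$ is not attainable: in this corollary $\widehat V_h=\widehat V_h^{k}$, so $\hat P_h$ reproduces only $\mathcal P_k$ on each face, and the best generic rate is $\|\hat P_h u-u\|_{0,e}\lesssim h_e^{k+1/2}|u|_{k+1,K}$; the extra regularity $u\in H^{k+2}$ cannot buy an additional power of $h$ because the projection target space is one degree too low. No orthogonality in the $L^2(\partial K)$ norm of $\hat P_h u-u$ compensates for this. Combined with the $\rho^{-1}h_K^{-1}$ weight, your edge term is of size $\rho^{-1/2}h^{k}$, not $h^{k+1}$.

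The repair is to make the stabilization part of the interpolation error vanish identically rather than merely small. Note that the exact solution enters the scheme as the pair $(u,\hat u)$ with $\hat u=\hat P_h u$ on each face (this is precisely what makes $c_h(u,\tilde v_h)=0$ in Lemma~\ref{Error:common:HDG}). Choose $v_h$ to be a \emph{conforming} interpolant of $u$ in $V_h^{k+1}\cap H_0^1(\Omega)$ (e.g.\ Scott--Zhang), so that its trace is single-valued, and set $\hat v_h:=\hat P_h(v_h|_e)\in\widehat V_h^{k}$. Then $\hat P_h(u-v_h)-(\hat u-\hat v_h)=\hat P_h u-\hat P_h v_h-\hat P_h u+\hat P_h v_h=0$ on every $\partial K$, the $\rho^{-1}$-weighted term drops out for every $\rho$, and only $\|\nabla_h(u-v_h)\|\lesssim h^{k+1}|u|_{k+2}$ and $\|\boldsymbol p-\Pi^k\boldsymbol p\|\lesssim h^{k+1}|\boldsymbol p|_{k+1}$ remain. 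Your own choice of the elementwise $L^2$ projection for $v_h$ also has the minor defect that $\hat P_h v_h$ is then double-valued on interior edges and hence not an admissible element of $\widehat V_h$; passing to the conforming interpolant fixes that as well.
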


In \cite{cockburn2010projection}, Cockburn, Gopalakrishnan and Sayas
established the error analysis for \comments{stabilized hybrid mixed (HDG)} methods based on a carefully
designed projection operator. In this paper, we present several
uniform convergence results with respect to the stabilization
parameter. As a result, the constants in the error estimates $C_{r,i}
(i=1,2,\cdots,5)$ are independent of $\rho$.


\section{Discontinuous Galerkin methods}\label{sec:DG}
In recent years, DG methods have been applied
to the solution of various differential equations due to their flexibility in
constructing feasible local-shape function spaces and their advantage in
capturing non-smooth or oscillatory solutions effectively.  Instead of
using the Lagrange multiplier technique, a penalty term is added to the
bilinear form of the DG method to force the continuity (see
\cite{arnold2002unified, brezzi2000discontinuous, honguniformly,
  hong2016robust, hong2017parameter} and the references therein).
With the concept of DG-gradient and DG-divergence defined as in
Definition \ref{DG_derivative_global}, most of the DG methods for
approximating the elliptic problem can be written as

\begin{subequations} \label{WG_global}
\begin{empheq}[left=\empheqlbrace\,]{align}
c\vect{p}_h -\DGdiv^*  \tilde{u}_h &= 0  ~~\qquad\qquad\quad {\rm in} \ \Omega,
  \label{WG_global_1}\\
-\DGnabla^*  \tilde{\vect{p}}_h   &= f_h  ~\qquad\qquad\quad {\rm in} \ \Omega,
  \label{WG_global_2}\\
\hat{\vect{p}}_h &= \comments{\bar{\vect{p}}}(\vect{p}_h, u_h)  \qquad {\rm on} \ {\cal E}_h,
\label{define_flux_1} \\
\hat{u}_h &= \comments{\bar u}(\vect{p}_h, u_h)  \qquad {\rm on} \ {\cal
  E}_h,\label{define_flux_2}
\end{empheq}
\end{subequations}
where $\comments{\bar{\boldsymbol p}}$ and $\comments{\bar u}$ are the formulas for defining $\hat{\vect{p}}_h$
and $\hat{u}_h$ in the terms of $\vect{p}_h$ and $u_h$, respectively. A crucial
feature of DG methods is that $\hat{\vect{p}}_h$ and $\hat{u}_h$
are given explicitly in \eqref{define_flux_1} --
\eqref{define_flux_2}. A basic question is: How do we define
$\hat{\vect{p}}_h$ and $\hat{u}_h$ in order for the DG schemes to result in good
approximations of the original problems? In the DG schemes, the local
problems on each element $K$ are connected through the
$\hat{\vect{p}}_h\cdot \vect{n}_K$ and $\hat{u}_h$. Therefore,
in order for make the schemes to be good approximations, $\hat{\vect{p}}_h$ and
$\hat{u}_h$ should be single-valued on the element edges.
Recalling condition (iii) in Lemma \ref{lem:conditional-dual} when
$\DGnabla $ and $-\DGdiv $ are mutually dual, we see that
$\hat{\vect{p}}_h = \{\vect{p}_h\}$ and $\hat{u}_h = \{u_h\}$ are
natural choices. However, it is known that such choices cannot ensure
the stability of the DG schemes. Hence, penalty terms are used to force
the continuity of either $\vect{p}_h$ or $u_h$.  Consequently, in
general, to define the numerical traces, \eqref{define_flux_1} --
\eqref{define_flux_2} can be given as 
\begin{equation}\label{numerical_fluxes}
\left\{
\begin{aligned}
\hat{\vect{p}}_h &= \gamma\{\vect{p}_h\} + (1-\gamma) \{-\alpha \nabla_h
u_h\} - \vect{\beta}[\vect{p}_h] + \mu_1(\llbracket u_h \rrbracket)  \quad
{\rm on}\ \mathcal{E}_h, \\
\hat{u}_h &= \{u_h\} +\vect{\beta}\cdot \llbracket u_h
\rrbracket + \mu_2([\vect{p}_h]) ~~\quad\qquad\qquad\qquad\qquad~ {\rm
  on}\ \mathcal{E}_h^i,
\quad\quad  \hat{u}_h=0 \quad {\rm on}\ \mathcal{E}_h^\partial.
\end{aligned}
\right.
\end{equation}
Here, $\gamma$ and $\vect{\beta}$ are parameters
that we can choose, and $\mu_1(\llbracket u_h \rrbracket)$ and
$\mu_2([\vect{p}_h])$ are penalty terms.  The possible choices of
numerical fluxes in the literature are summarized in Table
\ref{tab:DG-traces}.  In \cite{bassi1997high,brezzi1999discontinuous},
$r_e : \boldsymbol L^2({\cal E}_h) \rightarrow \vect{Q}_h$ is a
lifting operator defined by
\begin{equation} \label{equ:lifting}
\int_\Omega r_e(\boldsymbol w)\cdot \vect{q}_h ~dx = -\int_{e}
\boldsymbol w \cdot \{\vect{q}_h\} ~ds \qquad  \forall \vect{q}_h \in
\vect{Q}_h. 
\end{equation}

\begin{table}[!htbp]
\centering
\begin{tabular}{|c|c|c|c|c|}
\hline
Method & $\gamma$ & $\vect{\beta}$ & 
$\mu_1(\llbracket u_h \rrbracket)$ & $\mu_2([\vect{p}_h])$ \\ \hline 
IP method \cite{douglas1976interior, wheeler1978elliptic} & $0$ &
$\vect{0}$ & $\eta_eh_e^{-1}\llbracket u_h \rrbracket$ &
0 \\ \hline
LDG method \cite{cockburn1998local} & $1$ & 
$\mathcal{O}(1)$ & $\eta_e h_e^{-1}
\llbracket u_h \rrbracket$ & $0$ \\ \hline
DG Method of Bassi et. al. \cite{bassi1997high} & $0$ & 
$\vect{0}$ & $\eta_e\{r_e(\llbracket u_h \rrbracket )\}$
& $ 0$ \\ \hline 
DG Method of Brezzi et. al. \cite{brezzi1999discontinuous} & $1$ &
$\vect{0}$ & $\eta_e\{r_e(\llbracket u_h \rrbracket )\}$
& $0$ \\ \hline 
Mixed DG method & $1$ & $\vect{0}$ & $0$ &
$\eta_e h_e^{-1}[\vect{p}_h]$ \\ \hline
\end{tabular}
\caption{DG methods: Numerical fluxes, $\eta_e = \mathcal{O}(1)$} 
\label{tab:DG-traces}
\end{table}

In the next two subsections, we introduce two classes of DG methods.
The first class of DG methods is used to approximate the 
 form \eqref{H1} so a penalty term $\mu_1(\llbracket u_h
\rrbracket)$ is needed to force the continuity of $u_h$, and we
name this class of DG methods primal DG methods. The second class of DG
methods, which is named mixed DG methods, is aimed to approximate the mixed
form of the elliptic problem. Hence, a penalty term
$\mu_2([\vect{p}_h])$ is added to force the normal continuity of
$\vect{p}_h$.

\subsection{Primal discontinuous Galerkin methods}

By Definition \ref{DG_derivative_global}, since $\hat{u}_h$
and $\hat{\vect{p}}_h$ are single-valued, we establish the following
relations using integration by parts and \eqref{equ:dg-identity_1}: 
\begin{align}
\langle-\DGdiv^*  \tilde{u}_h, \vect{q}_h\rangle
& = - (u_h, {\rm div} \vect{q}_h)_{\mathcal T_h}+
\langle \hat{u}_h, [\vect{q}_h]\rangle_{{\cal E}_h^i}
\label{weak_gradient_1}\\
& = (\nabla_h u_h, \vect{q})_{\mathcal T_h}- \langle
\llbracket u_h\rrbracket, \{\vect{q}_h\}\rangle_{{\cal E}_h} + \langle
\hat{u}_h-\{u_h\}, [\vect{q}_h]\rangle_{{\cal
  E}_h^i},\label{weak_gradient_2}\\
\langle-\DGnabla^*  \tilde{\vect{p}}_h, v_h\rangle
& =- (\vect{p}_h, \nabla_h v_h)_{\mathcal T_h} + \langle
\hat{\vect{p}}_h, \llbracket v_h\rrbracket\rangle_{{\cal
  E}_h}\label{weak_divergence_1} \\
& = ({\rm div} \vect{p}_h, v_h)_{\mathcal T_h} + \langle
\hat{\vect{p}}_h - \{\vect{p}_h\}, \llbracket v_h \rrbracket
\rangle_{\mathcal{E}_h} - \langle [\vect{p}_h], \{v_h\}
\rangle_{\mathcal{E}_h^i}. \label{weak_divergence_2}
\end{align}

Motivated by \eqref{weak_gradient_2} and \eqref{weak_divergence_1},
most of the existing primal DG methods can be written as
\eqref{WG_global_1} -- \eqref{WG_global_2} and
\eqref{numerical_fluxes} with specific choices of the parameters
$\gamma$, $\vect{\beta}$, and the penalty term $\mu_1(\llbracket u_h
\rrbracket)$. Examples of primal methods are the IP method \cite{douglas1976interior,
wheeler1978elliptic}, the LDG method \cite{cockburn1998local}, the method of
Bassi et. al. \cite{bassi1997high}, and the method of Brezzi et. al.
\cite{brezzi1999discontinuous} listed in Table \ref{tab:DG-traces}.
Note that all of these DG methods have a penalty term on $u_h$ so they
intend to approximate the solution of the primal form.  To put it
simply, if $\alpha$ is piecewise constant and $\nabla_h V_h \subset
\vect{Q}_h $, then we can eliminate $\vect{p}_h$ to obtain the DG
formulations with $u_h$ solely.  We refer to \cite{arnold2002unified}
for a detailed discussion of primal DG methods.

\begin{remark}
As a combination of continuous and discontinuous Galerkin methods, the
so-called enriched DG (EDG) methods \cite{becker2004reduced,
sun2009locally}, which are locally conservative, enrich the
approximation space of the continuous Galerkin methods with piecewise
constant functions. EDG methods adopt the same weak formulation as DG
methods, but have a smaller number of degrees of freedom than the DG
methods.
\end{remark}

\subsection{Mixed discontinuous Galerkin methods} 
In this subsection, we derive a new family of mixed DG methods, which
can be regarded as the dual form of primal DG methods. In the
literature, there are some existing works that discuss mixed DG methods
for elliptic problems \cite{brezzi2005mixed, burman2009local,
chen2010local}, but all of these schemes are aimed at approximating the
primal form \eqref{H1}.  Alternatively, the mixed DG methods we propose are
designated to approximate the mixed form \eqref{HDIV}.

Instead of penalizing $u_h$, we consider a penalty term for
$\vect{p}_h$ to obtain the mixed DG schemes.  Let us choose $\gamma =
1, \vect{\beta} = \vect{0}, \mu_1(\llbracket u_h \rrbracket) = 0$, and
$\mu_2([\vect{p}_h]) = \eta_e h_e^{-1}[\vect{p}_h]$ in
\eqref{numerical_fluxes}, i.e.,  
\begin{equation} \label{equ:traces-MLDG}
\left\{
\begin{aligned}
&\hat{\vect{p}}_h=  \{\vect{p}_h\} 
\qquad \qquad \qquad~ {\rm on}\ \mathcal{E}_h, \\
&\hat{u}_h=\{u_h\} + \eta_e h_e^{-1} [\vect{p}_h] \quad {\rm on}\
    \mathcal{E}_h^i,
\quad\quad  \hat{u}_h=0 \quad {\rm on}\ \mathcal{E}_h^\partial.
\end{aligned}
\right.
\end{equation}
We can see that this choice is the dual of the simplified LDG method
\cite{cockburn1998local} (when $\vect{\beta} = \vect{0}$) in the
sense that the definitions $\bar p$ and $\bar u$ in \eqref{define_flux_1} --
\eqref{define_flux_2} are exchanged in the two schemes. The numerical
scheme \eqref{WG_global_1} -- \eqref{define_flux_2} with such choices
can be written as the mixed DG formulation: Find $(\vect{p}_h,u_h)\in
\vect{Q}_h\times V_h$ such that
\begin{equation} \label{equ:mixed_DG}
\left\{
\begin{aligned}
a_h^{\rm MDG}( \vect{p}_h, \vect{q}_h) + b_h^{\rm MDG}(\vect{q}_h,
u_h) &= 0 & \forall \vect{q}_h\in \vect{Q}_h,\\
b_h^{\rm MDG}(\vect{p}_h, v_h) \qquad\qquad\qquad\quad &=
-\int_\Omega f \, v_h\, dx & \forall v_h\in V_h. 
\end{aligned}
\right.
\end{equation}
Here, we choose $\eta_e = \mathcal{O}(1)$, and define  
\begin{align}
a_h^{\rm MDG}( \vect{p}, \vect{q}) &= (c\vect{p},
\vect{q})_{\mathcal T_h} +  \langle \eta_e h_e^{-1} [\vect{p}],
[\vect{q}] \rangle_{\mathcal E_h^i} \qquad \forall \vect{p},
\vect{q} \in \vect{Q}_h \cup \boldsymbol H({\rm div};\Omega),
\label{equ:MDG_a}\\
b_h^{\rm MDG}(\vect{p}, v) &= - ({\rm div}_h \vect{p},
v)_{\mathcal T_h} + \langle [\vect{p}], \{v\} \rangle_{\mathcal
E_h^i} \qquad \forall \vect{p}\in \vect{Q}_h \cup \boldsymbol H({\rm
div};\Omega), \forall v \in V_h \cup H^1(\Omega).\label{equ:MDG_b}
\end{align}

\begin{remark} \label{rmk:MDG-Brezzi}
With the choice of the numerical traces: $\gamma = 1$, $\vect{\beta} =
\vect{0}$, $\mu_1(\llbracket u_h \rrbracket) = 0$ and
$\mu_2([\vect{p}_h]) = \eta_e\{r_e([\vect{p}_h])\}$, we can obtain
another mixed DG scheme, which is the dual form of the method of
Brezzi et al. \cite{brezzi1999discontinuous}. 
Here, the lifting operator $r_e: L^2({\cal E}_h) \mapsto V_h$ is
defined by
\begin{equation} \label{equ:lifting}
\int_\Omega r_e(w) v_h ~dx = -\int_{e}
w \{v_h\} ~ds \qquad  \forall v_h \in V_h. 
\end{equation}
We are also aware that if $\gamma = 0$ or $\vect{\beta} \neq \vect{0}$,
the resulting mixed DG schemes are not symmetric.
\end{remark}

Next, we prove the well-posedness of the mixed
DG formulation \eqref{equ:mixed_DG} when choosing
\begin{equation} \label{equ:spaces-Poisson}
\begin{aligned}
V_h = V_h^k,~~~
\vect{Q}_h = \vect{Q}_h^{k+1},
\end{aligned}
\end{equation}
for $k \geq 0$, which leads to the optimal order of convergence in the
$L^2$ norm $\|\cdot\|$ for $u$ and the following norm for
$\boldsymbol{p}$:
\begin{equation} \label{equ:mixed_DG_space} 
\|\vect{q}\|_{{\rm MDG},h}^2 := (c \vect{q}, \vect{q})_{\mathcal
  T_h} + ({\rm div}_h\vect{q}, {\rm div}_h \vect{q} )_{\mathcal T_h}
  + \langle \eta_e h_e^{-1} [\vect{q}],
  [\vect{q}]\rangle_{\mathcal{E}_h^i} \qquad \forall \vect{q} \in
  \vect{Q}_h \cup \boldsymbol H({\rm div};\Omega).
\end{equation}

\paragraph{Boundedness.}
A direct calculation shows that $a_h^{\rm MDG}(\cdot, \cdot)$ satisfies 
\begin{equation} \label{equ:boundedness-a}
a_h^{\rm MDG}(\vect{p}, \vect{q}) \leq \|\vect{p}\|_{{\rm MDG}, h}
\|\vect{q}\|_{{\rm MDG}, h} \qquad \forall \vect{p},
\vect{q} \in \vect{Q}_h \cup \boldsymbol H({\rm div};\Omega).
\end{equation}
From the estimate of lifting operator (see also
\cite{arnold2002unified, brezzi2000discontinuous})
\begin{equation} \label{equ:lifting-boundedness}
\|r_e(w)\|_{0,\Omega} \lesssim  h_e^{-1/2} \|w\|_{0,e},
\end{equation}
we have the boundedness of $b_h^{\rm MDG}(\cdot, \cdot)$.

\begin{lemma} \label{lem:boundedness-b}
It holds that 
\begin{align}
b_h^{\rm MDG}(\vect{q}, v_h) &\lesssim \|\vect{q}\|_{{\rm MDG},h}
\|v_h\|_{0} & \forall \vect{q} \in \vect{Q}_h \cup
\boldsymbol H({\rm div};\Omega),\; \forall v_h \in V_h,
\label{equ:boundedness-b1}\\
b_h^{\rm MDG}(\vect{q}, v) &\lesssim \|\vect{q}\|_{{\rm MDG},h}
(\|v\|_{0} + h|v|_{1,h}) &\forall \vect{q} \in
\vect{Q}_h \cup \boldsymbol H({\rm div};\Omega),\; \forall v \in
H^1(\Omega).
\label{equ:boundedness-b2}
\end{align}
\end{lemma}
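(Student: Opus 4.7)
The plan is to bound each of the two terms in
\[
b_h^{\rm MDG}(\vect{q}, v) = -({\rm div}_h \vect{q}, v)_{\mathcal T_h} + \langle [\vect{q}], \{v\} \rangle_{\mathcal E_h^i}
\]
separately, using Cauchy--Schwarz followed by an appropriate trace inequality that absorbs the jump factor $h_e^{-1/2}$ present in the $\|\cdot\|_{{\rm MDG},h}$ norm.

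First I would handle the volume term. By Cauchy--Schwarz,
\[
|({\rm div}_h \vect{q}, v)_{\mathcal T_h}| \le \|{\rm div}_h \vect{q}\|_{0} \, \|v\|_{0} \le \|\vect{q}\|_{{\rm MDG},h} \, \|v\|_{0},
\]
where the last inequality follows directly from the definition \eqref{equ:mixed_DG_space}. This bound works equally well for $v_h\in V_h$ and $v\in H^1(\Omega)$, so it contributes the $\|v\|_0$ term to both estimates.

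Next I would bound the edge term by splitting the jump--average pairing with the weight $h_e^{1/2}\cdot h_e^{-1/2}$:
\[
\bigl|\langle [\vect{q}], \{v\} \rangle_{\mathcal E_h^i}\bigr|
\le \Bigl(\sum_{e\in \mathcal E_h^i} h_e^{-1}\|[\vect{q}]\|_{0,e}^2\Bigr)^{1/2}
\Bigl(\sum_{e\in \mathcal E_h^i} h_e\|\{v\}\|_{0,e}^2\Bigr)^{1/2}
\lesssim \|\vect{q}\|_{{\rm MDG},h}\Bigl(\sum_{e\in \mathcal E_h^i} h_e\|\{v\}\|_{0,e}^2\Bigr)^{1/2},
\]
where the $\eta_e=\mathcal{O}(1)$ assumption lets the first factor be absorbed into $\|\vect{q}\|_{{\rm MDG},h}$. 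It then remains to control $\sum_e h_e\|\{v\}\|_{0,e}^2$ by element norms of $v$. For \eqref{equ:boundedness-b1}, I would invoke the discrete trace (inverse) inequality $\|v_h\|_{0,e}^2 \lesssim h_e^{-1}\|v_h\|_{0,K}^2$ valid for piecewise polynomials on a shape-regular mesh, which gives $h_e\|\{v_h\}\|_{0,e}^2 \lesssim \|v_h\|_{0,K^+}^2 + \|v_h\|_{0,K^-}^2$; summation over edges and elements (using shape regularity to bound the overlap) yields the $\|v_h\|_0$ factor. For \eqref{equ:boundedness-b2}, the continuous trace inequality $\|v\|_{0,e}^2 \lesssim h_e^{-1}\|v\|_{0,K}^2 + h_e\,|v|_{1,K}^2$ gives $h_e\|\{v\}\|_{0,e}^2 \lesssim \|v\|_{0,K}^2 + h_e^2|v|_{1,K}^2$, and summing produces the $\|v\|_0 + h\,|v|_{1,h}$ factor after pulling out the mesh size $h=\max h_K$.

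I expect neither obstacle to be deep: the only subtle point is to make sure that the correct (discrete vs.\ continuous) trace inequality is applied in each case, and that $\eta_e=\mathcal{O}(1)$ enters the right side so that the edge term is genuinely absorbed into $\|\vect q\|_{{\rm MDG},h}$ rather than producing a $\rho$-dependent constant. Combining the volume and edge bounds then yields \eqref{equ:boundedness-b1} and \eqref{equ:boundedness-b2} respectively.
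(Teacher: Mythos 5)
Your proposal is correct, and every step goes through: the volume term is handled by Cauchy--Schwarz and absorbed into the $({\rm div}_h\vect{q},{\rm div}_h\vect{q})_{\mathcal T_h}$ part of $\|\cdot\|_{{\rm MDG},h}$, the weighted splitting $h_e^{1/2}\cdot h_e^{-1/2}$ of the edge pairing puts the jump factor into the $\|\cdot\|_{{\rm MDG},h}$ norm (using $\inf_e\eta_e>0$), and the remaining factor $\bigl(\sum_e h_e\|\{v\}\|_{0,e}^2\bigr)^{1/2}$ is controlled by the discrete trace inequality for $v_h\in V_h$ and by the scaled continuous trace inequality for $v\in H^1(\Omega)$, with shape regularity giving $h_e\simeq h_K$. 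The route differs mildly from the paper's: for \eqref{equ:boundedness-b1} the paper appeals to the lifting-operator estimate \eqref{equ:lifting-boundedness}, rewriting $\langle[\vect{q}],\{v_h\}\rangle_{\mathcal E_h^i}=-\sum_e\int_\Omega r_e([\vect{q}])\,v_h\,dx$ and then bounding $\|r_e([\vect{q}])\|_0\lesssim h_e^{-1/2}\|[\vect{q}]\|_{0,e}$, which only works when the test function lies in $V_h$; your weighted Cauchy--Schwarz plus trace-inequality argument treats both the discrete and the continuous case uniformly and is, if anything, more self-contained (the lifting bound is itself proved by the same discrete trace inequality). The only point worth being explicit about in a write-up is that for $\vect{q}\in\boldsymbol H({\rm div};\Omega)$ one needs enough regularity for $[\vect{q}]$ to make sense in $L^2(e)$ (in which case it vanishes on interior edges and the edge term drops out); this caveat is shared by the paper's statement and does not affect your argument.
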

  
\paragraph{Stability.}  
According to the theory of mixed methods, the stability of the
saddle point problem \eqref{equ:mixed_DG} is the corollary of the
following two conditions \cite{brezzi1974existence,brezzi1991mixed}:
\begin{enumerate}
\item K-ellipticity: 
\begin{equation} \label{equ:K-ellipticity}
a_h^{\rm MDG}(\vect{q}_h, \vect{q}_h) \gtrsim
\|\vect{q}_h\|_{{\rm MDG},h}^2 \qquad \forall \vect{q}_h \in Z_h,
\end{equation}
where $Z_h = \{\vect{q}_h \in \vect Q_h~|~ b_h^{\rm
MDG}(\vect{q}_h, v_h) = 0, \forall v_h \in V_h\}$.
\item The discrete inf-sup condition:  
\begin{equation} \label{equ:inf-sup}
\inf_{v_h \in V_h} \sup_{\vect{q}_h \in \vect{Q}_h}
\frac{b_h^{\rm MDG}(\vect{q}_h, v_h)}{\|\vect{q}_h\|_{{\rm MDG}, h}
  \|v_h\|} \gtrsim 1.
\end{equation}
\end{enumerate}

\begin{theorem} \label{thm:well-posedness}
The mixed DG schemes \eqref{equ:mixed_DG} are well-posed for
$(\vect{Q}_h^{k+1}, \|\cdot\|_{{\rm MDG},h})$ and $(V_h^k, \|\cdot\|)$. 
\end{theorem}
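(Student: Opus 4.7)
The plan is to verify the two sufficient conditions from mixed-method theory listed immediately before the theorem: the K-ellipticity of $a_h^{\rm MDG}$ on the discrete kernel $Z_h$, and the discrete inf-sup condition for $b_h^{\rm MDG}$. Boundedness of both bilinear forms with respect to the norm pair $(\|\cdot\|_{{\rm MDG},h},\|\cdot\|)$ has already been established in \eqref{equ:boundedness-a} and Lemma \ref{lem:boundedness-b}, so the result will follow from the standard Brezzi-Babu\v{s}ka saddle point framework.

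For the K-ellipticity, fix $\vect q_h\in Z_h$. Because $\vect Q_h=\vect Q_h^{k+1}$, the piecewise divergence ${\rm div}_h \vect q_h$ belongs to $V_h^k=V_h$, so I may test the kernel relation against $v_h={\rm div}_h \vect q_h$ to obtain
\begin{equation*}
\|{\rm div}_h \vect q_h\|^2_{0} \;=\; \langle [\vect q_h],\{{\rm div}_h \vect q_h\}\rangle_{\mathcal E_h^i}.
\end{equation*}
A Cauchy-Schwarz step combined with the discrete trace inequality $h_e\|\{{\rm div}_h \vect q_h\}\|_{0,e}^2\lesssim \|{\rm div}_h \vect q_h\|^2_{0,\omega_e}$ yields
\begin{equation*}
\|{\rm div}_h \vect q_h\|_0 \;\lesssim\; \Bigl(\sum_{e\in\mathcal E_h^i} h_e^{-1}\|[\vect q_h]\|_{0,e}^2\Bigr)^{1/2}.
\end{equation*}
Combining this with the positive definiteness of $c$ and the definition of $a_h^{\rm MDG}$, I recover the full $\|\cdot\|_{{\rm MDG},h}^2$ norm on $Z_h$, i.e.\ \eqref{equ:K-ellipticity}.

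For the inf-sup condition, I follow a Fortin-type construction. Given $v_h\in V_h^k$, solve the auxiliary problem $-\Delta w = v_h$ in $\Omega$ with $w=0$ on $\partial\Omega$, and set $\boldsymbol q=-\nabla w\in\boldsymbol H({\rm div};\Omega)$, so that ${\rm div}\boldsymbol q=v_h$ and $\|\boldsymbol q\|_{H({\rm div})}\lesssim \|v_h\|$. Using the inclusion $\vect Q_h^{k,RT}\subset\vect Q_h^{k+1}$, let $\Pi^{RT}$ denote the canonical Raviart-Thomas interpolant into $\vect Q_h^{k,RT}$ and set $\vect q_h=-\Pi^{RT}\boldsymbol q$. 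The key properties are that $[\vect q_h]=0$ across interelement faces (so the jump term in $\|\vect q_h\|_{{\rm MDG},h}$ vanishes) and the commuting diagram ${\rm div}_h\vect q_h=-P_h^k({\rm div}\boldsymbol q)=-v_h$. Consequently
\begin{equation*}
b_h^{\rm MDG}(\vect q_h,v_h) \;=\; -({\rm div}_h\vect q_h,v_h)_{\mathcal T_h} \;=\; \|v_h\|^2,
\end{equation*}
while $\|\vect q_h\|_{{\rm MDG},h}\lesssim \|\vect q_h\|_{H({\rm div})}\lesssim \|v_h\|$ by the stability of $\Pi^{RT}$, giving \eqref{equ:inf-sup}.

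The main obstacle is the K-ellipticity step: the term $a_h^{\rm MDG}$ itself controls only the $L^2$ mass and the weighted jumps, so it is crucial that $\vect Q_h^{k+1}$ be rich enough for ${\rm div}_h \vect q_h$ to be a legitimate test function in $V_h^k$. This is exactly why the pairing \eqref{equ:spaces-Poisson} is chosen, and it is the place where a mismatch of polynomial degrees would break the argument. A secondary technical point is the boundedness of $\Pi^{RT}$ on $\boldsymbol H^1$; if the domain is not convex one would replace the auxiliary Poisson problem and the RT interpolant by a BDM-type Fortin operator mapping $\boldsymbol H({\rm div};\Omega)$ stably into $\vect Q_h^{k,RT}$ and preserving the commuting diagram, so the overall argument is unaffected.
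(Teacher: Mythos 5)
Your proposal is correct and follows essentially the same route as the paper: verify boundedness, K-ellipticity on the discrete kernel $Z_h$, and the discrete inf-sup condition, then invoke the standard saddle-point theory. The only differences are in execution. For the kernel coercivity, the paper characterizes $Z_h$ through the lifting operator $r_e$ and controls $\|{\rm div}_h\vect{q}_h\|_0$ via the bound \eqref{equ:lifting-boundedness}, whereas you test the kernel relation directly with $v_h={\rm div}_h\vect{q}_h\in V_h^k$ and use Cauchy--Schwarz plus a discrete trace inequality; these are the same estimate in two notations, though your version avoids introducing $r_e$ and makes explicit why the degree pairing $\vect{Q}_h^{k+1}$--$V_h^k$ is needed. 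For the inf-sup condition the paper simply cites the BDM inf-sup condition, while you spell out the underlying Fortin construction through the conforming RT subspace (with the appropriate caveat about the regularity needed for the canonical interpolant); since the jumps of a conforming interpolant vanish, $\|\cdot\|_{{\rm MDG},h}$ reduces to the $\boldsymbol H({\rm div})$ norm there, which is exactly the mechanism the paper's one-line citation relies on.
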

\begin{proof}
We first show the K-ellipticity \eqref{equ:K-ellipticity}. By the
definition of the lifting operator \eqref{equ:lifting}, we have 
$$ 
b_h^{\rm MDG}(\vect{q}_h, v_h) = \int_\Omega \left( {\rm
div}_h\vect{q}_h + \sum_{e\in{\cal E}_h^i} r_e([\vect{q}_h])
\right) v_h ~dx, 
$$
which implies that  
$$
Z_h = \{ \vect{q}_h\in \vect Q_h^{k+1}~|~ {\rm div}_h\vect{q}_h
+ \sum_{e\in{\cal E}_h^i} r_e([\vect{q}_h]) = 0\}.
$$
Let $\eta_0=\inf_{e\in{\cal E}_h^i} \eta_e$ be a positive constant
that is independent of the grid size. Then \eqref{equ:lifting-boundedness} implies  
\begin{equation}
a(\vect{q}_h, \vect{q}_h) \geq \|\vect{q}_h\|_{0,\Omega}^2 +
\eta_0 \sum_{e\in{\cal E}_h^i}  h_e^{-1} \|[\vect{q}_h]\|_{0,e}^2
\gtrsim \|\vect{q}_h\|_{{\rm MDG},h}^2 \qquad \forall \vect{q}_h
\in Z_h. 
\end{equation}
The inf-sup condition \eqref{equ:inf-sup} follows from the inf-sup
condition for the BDM element. 
\end{proof}

\begin{remark}
A similar argument shows that the penalty term
$\langle \eta_e h_e^{-1} [\vect{p}_h], [\vect{q}_h] \rangle_{\mathcal
E_h^i}$ can be replaced by $
\langle \eta_e r_e([\vect{\sigma}_h]) \cdot r_e([\vect{q}_h])
\rangle_{\mathcal E_h^i}$, and the well-posedness of the corresponding
scheme can be proved similarly with the modified norm
$ \|\vect{q}\|_{*,h}^2 = (c \vect{q}, \vect{q})_{\mathcal T_h} +
({\rm div}_h\vect{q}, {\rm div}_h\vect{q} )_{\mathcal T_h} + \langle
\eta_e r_e([\vect{q}]), r_e([\vect{q}])\rangle_{\mathcal{E}_h^i}
$.
\end{remark}

\begin{lemma}\label{lem:consistency}
Assume that the solution $(\vect{q},u)\in \boldsymbol H({\rm
div};\Omega) \times H^1(\Omega)$. Then we have
\begin{equation} \label{equ:consistency}
\left\{
\begin{aligned}
a_h^{\rm MDG}(\vect{p} - \vect{p}_h, \vect{q}_h) + 
b_h^{\rm MDG}(\vect{q}_h, u -u_h) & = 0  \qquad \forall \vect{q}_h\in
\vect{Q}_h, \\
b_h^{\rm MDG}(\vect{p} - \vect{p}_h, v_h)
\qquad\qquad\qquad\qquad\quad &= 0
\qquad  \forall v_h\in V_h. 
\end{aligned}
\right.
\end{equation}
\end{lemma}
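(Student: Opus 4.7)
The plan is to verify that the exact solution $(\vect{p}, u)$ of the mixed form \eqref{HDIV} satisfies the discrete variational equations, and then subtract from the discrete scheme \eqref{equ:mixed_DG}. The key observation is that the extra regularity of the exact solution makes the jump/penalty terms in the forms collapse, so the discrete forms evaluated at $(\vect{p}, u)$ reduce to classical weak identities.

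First I would note the two regularity reductions that will be used throughout. Since $\vect{p}\in \boldsymbol H({\rm div};\Omega)$, the normal jump $[\vect{p}]$ vanishes on every $e\in \mathcal{E}_h^i$; consequently the penalty term in $a_h^{\rm MDG}$ and the coupling term in $b_h^{\rm MDG}$ at $\vect{p}$ both drop out, giving
\[
a_h^{\rm MDG}(\vect{p}, \vect{q}_h) = (c\vect{p}, \vect{q}_h)_{\mathcal T_h}, \qquad
b_h^{\rm MDG}(\vect{p}, v_h) = -({\rm div}\,\vect{p}, v_h)_{\mathcal T_h}.
\]
Similarly, since $u\in H^1_0(\Omega)$, the jump $\llbracket u\rrbracket$ vanishes on every interior edge and (by the homogeneous boundary condition) on every boundary edge as well.

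Next, for the first identity I would test the primal relation $c\vect{p} + \nabla u = 0$ with $\vect{q}_h\in \vect{Q}_h$ on each element and sum over $\mathcal T_h$. Using elementwise integration by parts and the DG identity \eqref{eq:7}, one obtains
\[
(\nabla_h u, \vect{q}_h)_{\mathcal T_h} = -({\rm div}_h \vect{q}_h, u)_{\mathcal T_h} + \langle \{\vect{q}_h\}, \llbracket u\rrbracket\rangle_{\mathcal E_h} + \langle [\vect{q}_h], \{u\}\rangle_{\mathcal E_h^i}.
\]
The middle term vanishes by the remark on $\llbracket u\rrbracket$, so the sum is precisely $b_h^{\rm MDG}(\vect{q}_h, u)$. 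Combined with the reduction of $a_h^{\rm MDG}$, this yields $a_h^{\rm MDG}(\vect{p}, \vect{q}_h) + b_h^{\rm MDG}(\vect{q}_h, u) = 0$. Subtracting the first discrete equation in \eqref{equ:mixed_DG} gives the first line of \eqref{equ:consistency}.

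For the second identity, testing ${\rm div}\,\vect{p} = f$ with $v_h\in V_h$ and using ${\rm div}_h\vect{p} = {\rm div}\,\vect{p}$ and $[\vect{p}]=0$ gives $b_h^{\rm MDG}(\vect{p}, v_h) = -(f, v_h)_{\mathcal T_h}$; subtraction from the second discrete equation produces the second line of \eqref{equ:consistency}. No serious obstacle is expected here — the proof is essentially bookkeeping — the only point requiring attention is the careful treatment of the boundary edges, where $\{u\}$ means $u$ and $\llbracket u\rrbracket = u\vect{n}$, and both are made to vanish only thanks to the Dirichlet condition $u|_{\partial\Omega}=0$.
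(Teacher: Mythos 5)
Your proof is correct and is exactly the standard consistency argument that the paper implicitly relies on (the lemma is stated there without proof): the $\boldsymbol H({\rm div})$ and $H^1_0$ regularity of the exact solution kills the jump terms, the elementwise integration by parts together with identity \eqref{eq:7} converts the primal relation into the form $a_h^{\rm MDG}(\vect{p},\vect{q}_h)+b_h^{\rm MDG}(\vect{q}_h,u)=0$, and subtraction from \eqref{equ:mixed_DG} gives \eqref{equ:consistency}. Your attention to the boundary edges (where $\llbracket u\rrbracket=u\vect{n}$ vanishes only because of the Dirichlet condition, while the $\{v\}$-coupling term is restricted to $\mathcal{E}_h^i$ anyway) is the right point to be careful about, and it is handled correctly.
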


By combining Lemma \ref{lem:consistency} and the well-posedness of the
mixed DG formulation \eqref{equ:mixed_DG}, we have the following a
priori error estimates.

\begin{theorem} \label{thm:priori}
Let $(\vect{p}_h, u_h) \in \boldsymbol Q_h^{k+1}\times V_h^k$ be the
solution for the mixed DG formulation \eqref{equ:mixed_DG}, and
$(\vect{p}, u) \in \boldsymbol H({\rm div};\Omega) \times H^1(\Omega)$
be the solution for \eqref{HDIV}.  Then we have 
\begin{equation} \label{equ:cea}
\|\vect{p} - \vect{p}_h\|_{{\rm MDG},h} + \|u - u_h\| \lesssim
\inf_{\vect{p}_h \in \vect{Q}_h^{k+1}} \|\vect{p} - \vect{p}_h\|_{{\rm
MDG},h} + \inf_{v_h \in V_h^k} (\|u-v_h\|_{0} + h|u-v_h|_{1,h}). 
\end{equation}
\end{theorem}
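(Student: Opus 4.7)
The plan is to carry out a standard Strang-type argument for saddle-point problems, using consistency from Lemma \ref{lem:consistency} together with the well-posedness from Theorem \ref{thm:well-posedness} and the boundedness bounds \eqref{equ:boundedness-a} and Lemma \ref{lem:boundedness-b}. First, pick arbitrary $\vect{q}_h \in \vect{Q}_h^{k+1}$ and $v_h \in V_h^k$ and split the total errors as $\vect{p}-\vect{p}_h = (\vect{p}-\vect{q}_h) + (\vect{q}_h - \vect{p}_h)$ and $u - u_h = (u-v_h) + (v_h - u_h)$. Write $\vect{e}_h := \vect{q}_h - \vect{p}_h \in \vect{Q}_h^{k+1}$ and $\varepsilon_h := v_h - u_h \in V_h^k$. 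The task reduces to controlling $\|\vect{e}_h\|_{{\rm MDG},h} + \|\varepsilon_h\|$ in terms of $\|\vect{p}-\vect{q}_h\|_{{\rm MDG},h}$ and $\|u-v_h\|_0 + h|u-v_h|_{1,h}$, after which a triangle inequality and infima give the result.

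Next, use Lemma \ref{lem:consistency} to produce the perturbed discrete saddle-point equations: for all $\vect{r}_h \in \vect{Q}_h^{k+1}$ and $w_h \in V_h^k$,
\begin{align*}
a_h^{\rm MDG}(\vect{e}_h,\vect{r}_h) + b_h^{\rm MDG}(\vect{r}_h,\varepsilon_h)
&= a_h^{\rm MDG}(\vect{q}_h - \vect{p},\vect{r}_h) + b_h^{\rm MDG}(\vect{r}_h, v_h - u),\\
b_h^{\rm MDG}(\vect{e}_h, w_h) &= b_h^{\rm MDG}(\vect{q}_h - \vect{p}, w_h).
\end{align*}
Theorem \ref{thm:well-posedness} provides a global inf-sup condition for the combined bilinear form $a_h^{\rm MDG}(\cdot,\cdot) + b_h^{\rm MDG}(\cdot,\cdot) + b_h^{\rm MDG}(\cdot,\cdot)$ on $(\vect{Q}_h^{k+1}\times V_h^k)^2$ with the norm $\|\cdot\|_{{\rm MDG},h}+\|\cdot\|$, so there exist test functions $(\vect{r}_h,w_h)$ of unit combined norm for which the left-hand side dominates $\|\vect{e}_h\|_{{\rm MDG},h} + \|\varepsilon_h\|$.

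Then apply the boundedness bounds to the right-hand side: use \eqref{equ:boundedness-a} on $a_h^{\rm MDG}(\vect{q}_h - \vect{p},\vect{r}_h)$, use the sharper estimate \eqref{equ:boundedness-b2} of Lemma \ref{lem:boundedness-b} on $b_h^{\rm MDG}(\vect{r}_h, v_h - u)$ (this is the place where the $\|u-v_h\|_0 + h|u-v_h|_{1,h}$ term arises, since $v_h - u$ only lies in $H^1(\Omega)$, not in $V_h$), and use \eqref{equ:boundedness-b1} on $b_h^{\rm MDG}(\vect{q}_h - \vect{p}, w_h)$ since $w_h \in V_h$. This yields
\[
\|\vect{e}_h\|_{{\rm MDG},h} + \|\varepsilon_h\| \lesssim \|\vect{p}-\vect{q}_h\|_{{\rm MDG},h} + \|u-v_h\|_0 + h|u-v_h|_{1,h}.
\]
Finishing with a triangle inequality and taking infima over $\vect{q}_h \in \vect{Q}_h^{k+1}$ and $v_h \in V_h^k$ delivers \eqref{equ:cea}.

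The proof is essentially routine once the three ingredients are assembled; the only subtlety is the mismatch between the continuous regularity $u \in H^1(\Omega)$ and the broken polynomial space $V_h^k$. This forces the use of the extended boundedness \eqref{equ:boundedness-b2} rather than \eqref{equ:boundedness-b1}, and hence the appearance of the extra $h|u-v_h|_{1,h}$ term on the right of \eqref{equ:cea}; one should also verify that $r_e$-type lifting contributions in $b_h^{\rm MDG}$ applied to the non-discrete argument $u - v_h$ are correctly absorbed, which follows from \eqref{equ:lifting-boundedness} and a local trace inequality.
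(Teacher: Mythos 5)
Your argument is correct and is exactly the route the paper intends: it states Theorem \ref{thm:priori} as a direct consequence of Lemma \ref{lem:consistency} together with the well-posedness of \eqref{equ:mixed_DG}, and your Strang-type decomposition with the discrete inf-sup applied to $(\vect{q}_h-\vect{p}_h,\,v_h-u_h)$ and the boundedness bounds \eqref{equ:boundedness-a}, \eqref{equ:boundedness-b1}, \eqref{equ:boundedness-b2} is the standard way to spell that out. The only point worth recording is that \eqref{equ:boundedness-b2} must be applied to $v_h-u$, which lies in $V_h+H^1(\Omega)$ rather than $H^1(\Omega)$; the same elementwise trace-inequality proof covers this broken-$H^1$ case, which is precisely why the term $h|u-v_h|_{1,h}$ appears in \eqref{equ:cea}.
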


Using the Scott-Zhang interpolation \cite{scott1990finite}, we
have the following theorem.

\begin{theorem} \label{thm:error-estimate} 
Let $(\vect{p}_h, u_h) \in \boldsymbol Q_h^{k+1}\times V_h^k$ be the
solution of the mixed DG formulation \eqref{equ:mixed_DG}.  Assume
that the solution of \eqref{HDIV} satisfies $(\vect{p},u)\in
\boldsymbol H^{k+2}(\Omega) \times H^{k+1}(\Omega)$.  Then we have  
\begin{equation} \label{equ:error-estimate}
\|\vect{p} - \vect{p}_h\|_{{\rm MDG},h} + \|u - u_h\|_{0} \lesssim
h^{k+1}(|\vect{p}|_{k+2} + |u|_{k+1}).
\end{equation}
\end{theorem}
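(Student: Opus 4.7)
The plan is to derive the convergence rate by combining the quasi-optimality estimate of Theorem \ref{thm:priori} with standard interpolation error bounds provided by Scott-Zhang interpolation. First I would choose a continuous piecewise polynomial interpolant $I_h \vect{p} \in \vect{Q}_h^{k+1}$ of the flux (using the Scott-Zhang operator applied componentwise, mapping into the continuous subspace of $\vect{Q}_h^{k+1}$) and a scalar Scott-Zhang interpolant $\Pi_h u \in V_h^k$ of the primal variable. Then I would bound each of the infima appearing on the right-hand side of \eqref{equ:cea} using these particular choices.

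For the flux term $\|\vect{p} - I_h\vect{p}\|_{{\rm MDG},h}$, I would unpack the norm \eqref{equ:mixed_DG_space} into its three contributions. The $L^2$ piece $(c(\vect{p}-I_h\vect{p}),\vect{p}-I_h\vect{p})_{\mathcal T_h}$ is controlled by $h^{k+2}|\vect{p}|_{k+2}$. The divergence piece $\|\operatorname{div}_h(\vect{p}-I_h\vect{p})\|_0$ is controlled by $|\vect{p}-I_h\vect{p}|_{1,h} \lesssim h^{k+1}|\vect{p}|_{k+2}$ via the standard Scott-Zhang $H^1$-seminorm estimate. The jump contribution $\sum_{e\in\mathcal{E}_h^i} h_e^{-1}\|[\vect{p}-I_h\vect{p}]\|_{0,e}^2$ is the key place where continuity of the interpolant pays off: since $\vect{p}\in\boldsymbol H(\operatorname{div};\Omega)$ has continuous normal component and $I_h\vect{p}$ is globally continuous, the normal jump $[\vect{p}-I_h\vect{p}]$ vanishes identically on interior edges, killing that term. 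Hence $\|\vect{p}-I_h\vect{p}\|_{{\rm MDG},h}\lesssim h^{k+1}|\vect{p}|_{k+2}$.

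For the primal term, I would apply the scalar Scott-Zhang estimates \cite{scott1990finite} to obtain $\|u-\Pi_h u\|_0 \lesssim h^{k+1}|u|_{k+1}$ and $h|u-\Pi_h u|_{1,h} \lesssim h\cdot h^k|u|_{k+1} = h^{k+1}|u|_{k+1}$. Combining these two bounds through \eqref{equ:cea} immediately yields the claimed estimate \eqref{equ:error-estimate}.

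The main obstacle I anticipate is not an analytic difficulty but rather a bookkeeping one: ensuring that the jump term $h_e^{-1}\|[\vect{p}-I_h\vect{p}]\|_{0,e}^2$ is handled correctly. One must emphasize that the Scott-Zhang interpolant is constructed to lie in the continuous subspace of $\vect{Q}_h^{k+1}$, so that only the normal continuity of $\vect{p}$ itself matters, and that $[\vect{p}]=0$ on $\mathcal{E}_h^i$ as $\vect{p}\in\boldsymbol H(\operatorname{div};\Omega)$. Once this observation is made, the rest is routine application of the quasi-optimality estimate and polynomial approximation.
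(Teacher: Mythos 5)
Your proposal is correct and follows exactly the route the paper intends: the paper gives no detailed proof beyond invoking the Scott--Zhang interpolant together with the quasi-optimality bound \eqref{equ:cea}, and your argument fills in precisely those details (continuous componentwise interpolant of $\vect{p}$ so that the jump contribution in $\|\cdot\|_{{\rm MDG},h}$ vanishes, standard $L^2$ and $H^1$-seminorm approximation estimates for the remaining pieces, and a local projection for $u$ in $V_h^k$). No gaps.
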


\begin{remark}
$\vect{Q}_h$ can be chosen as a discontinuous RT finite element, i.e.,
  $\vect{Q}_h=\vect{Q}_h^{k, RT}$,
and the corresponding well-posedness and error estimates can also be
obtained similarly.
\end{remark}

\begin{remark}
The mixed DG method for linear elasticity and the proof of the well-posedness
can also be provided due to the stability analysis in
\cite{gong2015mixed}, which shows that optimal convergence rates are
achieved for both stress and displacement variables. 
\end{remark}

\paragraph{Numerical examples of mixed DG methods}
We next illustrate the performance of mixed DG methods for the Poisson problem in
2D. The problem is computed on the unit
square $\Omega = [0,1]^2$ with a homogeneous boundary condition that
$u = 0$ on $\partial \Omega$. The coefficient matrix $\alpha =
\vect{I}_2$, where $\vect{I}_2 \in \mathbb{R}^{2\times2}$ is the
identity matrix.  The exact solution satisfies  
\begin{equation} \label{equ:numerical-example}
u = \sin(2\pi x)\sin(\pi y) \qquad  {\rm and} \qquad
\vect{p} = \begin{pmatrix}
2\pi \cos(2\pi x)\sin(\pi y) \\
\pi \sin(2\pi x)\cos(\pi y)
\end{pmatrix}.
\end{equation} 
The exact load function $f$ can be analytically derived for the given
$u$. Non-nested, quasi-uniform unstructured grids with different
grid sizes are used in the computation. The parameter $\eta_e$ is set
to be $1$. For the unstructured grid, we define $h = N_{ele}^{-1/d}$,
where $N_{ele}$ is the number of elements. From Table
\ref{tab:Poisson-unstructured}, the optimal convergence can be
observed. Moreover, we observe that the $L^2$ error of $\vect{p}$ is
of order $k+2$, which is one order higher than the error estimate
\eqref{equ:error-estimate}. 

\begin{table}[!htbp] 
\centering
\begin{subtable}{\textwidth} 
\caption{Poisson problem: $\mathcal{P}_1^{-1} - \mathcal{P}_0^{-1}$,
  unstructured grids}
\centering
\begin{tabular}{c|cc|cc|cc}
\hline
$N_{ele}$	& $\|u -u_h\|_{0,\Omega}$	&$h^n$	&$\|\vect{p} -
\vect{p}_h\|_{0,\Omega}$ & $h^n$ &$\|{\rm div}_h(\vect{p} -
\vect{p}_h)\|_{0,\Omega}$	&$h^n$ \\ \hline
220 & 0.0772588 & -- & 0.111405 & -- & 3.79529 & -- \\ \hline
976 & 0.0369387 & 0.99 & 0.0255594 & 1.98 & 1.82077 & 0.99 \\ \hline
4054 & 0.0180824 & 1.00 & 0.006156 & 2.00 &  0.89211 & 1.00 \\ \hline 
\end{tabular}
\label{tab:Poisson-P1-P0-unstructured}
\end{subtable}

\vspace{1mm}

\begin{subtable}{\textwidth}
\centering
\caption{Poisson problem: $\mathcal{P}_2^{-1} - \mathcal{P}_1^{-1}$,
  unstructured grids}
\begin{tabular}{c|cc|cc|cc}
  \hline
$N_{ele}$	& $\|u -u_h\|_{0,\Omega}$	&$h^n$	&$\|\vect{p} -
\vect{p}_h\|_{0,\Omega}$ & $h^n$ &$\|{\rm div}_h(\vect{p} -
\vect{p}_h)\|_{0,\Omega}$	&$h^n$ \\ \hline
220 & 0.0066776 & -- & 0.00557092  & -- & 0.332017 & -- \\ \hline 
976 & 0.00146501 & 2.04 & 0.000581936 & 3.03 & 0.0726511 & 2.04\\ \hline
4054 & 0.000357469 & 1.98 & 6.98718e-05 & 2.98 & 0.0177161 & 1.98\\ \hline 
\end{tabular}
\label{tab:Poisson-P2-P1-unstructured}
\end{subtable}

\vspace{1mm}

\begin{subtable}{\textwidth}
\centering 
\caption{Poisson problem: $\mathcal{P}_3^{-1} - \mathcal{P}_2^{-1}$,
  unstructured grids}
\begin{tabular}{c|cc|cc|cc}
  \hline
$N_{ele}$	& $\|u -u_h\|_{0,\Omega}$	&$h^n$	&$\|\vect{p} -
\vect{p}_h\|_{0,\Omega}$ & $h^n$ &$\|{\rm div}_h(\vect{p} -
\vect{p}_h)\|_{0,\Omega}$	&$h^n$ \\ \hline
220 & 0.000389143 & -- & 0.00022748 & -- & 0.0193726 & -- \\ \hline 
976 & 4.20827e-05 & 2.99 & 1.16707e-05 & 3.99 & 0.00209091 &  2.99 \\ \hline 
4054 & 4.95135e-06 &  3.01 & 6.79925e-07 & 3.99 & 0.000245922 & 3.01 \\ \hline 
\end{tabular}
\label{tab:Poisson-P3-P2-unstructured}
\end{subtable}
\caption{Poisson problem: the convergence order on 2D non-nested unstructured grids}
\label{tab:Poisson-unstructured}
\end{table}

\section{Relationship between different methods}\label{relationship}

In this section, we shall discuss the relationship between different
methods.   

\subsection{From stabilized hybrid methods to the LDG methods}
Let us show that the stabilized hybrid mixed methods can deduce the
LDG scheme if we set $\hat{u}_h = \{u_h\} + \vect{\beta}\cdot
\llbracket u_h \rrbracket$ and $\hat{v}_h = \{v_h\} +
\vect{\beta}\cdot \llbracket v_h \rrbracket$.  First, we can see that
the first equation for the \comments{stabilized hybrid mixed (HDG)} methods \eqref{eq:hybridgalerkin} is
equivalent to \eqref{WG_global_1} formally. Further, the left hand of
the second equation of \eqref{eq:hybridgalerkin} is
\begin{equation} \label{equ:HDG2LDG}
\begin{aligned}
& \quad -({\rm div}\vect{p}_h, v_h)_{\mathcal T_h} + \langle
\vect{p}_h\cdot \vect{n}, \hat{v}_h\rangle_{\partial{\mathcal T_h}} -
\langle \tau (u_h - \hat{u}_h), v_h -\hat{v}_h\rangle_{\partial
{\mathcal T_h}}\\
& = (\vect{p}_h, \nabla v_h)_{\mathcal T_h} +
\langle \vect{p}_h\cdot \vect{n}, \hat{v}_h - v_h \rangle_{\partial
{\mathcal T_h}} - \langle \tau (u_h - \hat{u}_h), v_h
-\hat{v}_h\rangle_{\partial {\mathcal T_h}}\\
& = (\vect{p}_h, \nabla v_h)_{\mathcal T_h} + \langle \{\vect{p}_h\},
  \llbracket \hat{v}_h - v_h\rrbracket \rangle_{{\cal E}_h} + \langle
  [\vect{p}_h], \{\hat{v}_h - v_h\} \rangle_{{\cal E}_h^i}
  \qquad \qquad\qquad\qquad\qquad\qquad (\text{by } \eqref{eq:7})\\ 
 &~~~ -2 \langle \tau \{u_h - \hat{u}_h\}, \{v_h
 -\hat{v}_h\}\rangle_{{\cal E}_h} - \frac{1}{2} \langle \tau
 \llbracket u_h - \hat{u}_h\rrbracket, \llbracket v_h -\hat{v}_h
 \rrbracket\rangle_{{\cal E}_h} \\
& = (\vect{p}_h, \nabla v_h)_{\mathcal T_h} - \langle
\{\vect{p}_h\}, \llbracket v_h\rrbracket \rangle_{{\cal E}_h} 
+ \langle[\vect{p}_h], \vect{\beta}\cdot \llbracket v_h \rrbracket
\rangle_{{\mathcal{E}}_h}
-2 \langle \tau\vect{\beta}\cdot \llbracket u_h \rrbracket,
  \vect{\beta} \cdot \llbracket v_h \rrbracket \rangle_{{\cal E}_h}
- \frac{1}{2} \langle \tau \llbracket u_h \rrbracket, \llbracket
v_h \rrbracket\rangle_{{\cal E}_h} \\
& = (\vect{p}_h, \nabla v_h)_{\mathcal T_h} - \langle \{\vect{p}_h\} -
\vect{\beta}[\vect{p}_h] + \frac{1}{2}\tau (1+4|\vect{\beta}\cdot
\vect{n}_e|^2) \llbracket u_h \rrbracket, \llbracket v_h\rrbracket
\rangle_{{\cal E}_h},
\end{aligned}
\end{equation}
which is same as \eqref{weak_divergence_1} under $\hat{\vect{p}}_h =
\{\vect{p}_h\} - \vect{\beta}[\vect{p}_h] +
\frac{1}{2}\tau(1+4|\vect{\beta}\cdot \vect{n}_e|^2) \llbracket
u_h \rrbracket $. This is exactly the definition of the numerical trace
$\hat{\vect{p}}_h$ of the primal LDG methods (see the LDG methods in Table
\ref{tab:DG-traces}) when $\tau = \mathcal{O}(h^{-1})$.  Therefore,
primal LDG methods can be formally deduced from \comments{stabilized hybrid mixed (HDG)} methods when choosing the
space
\begin{equation} \label{equ:HDG2LDG-space} 
\widetilde{V}_h = \left\{ 
(v_h, \hat{v}_h):~v_h \in V_h, \hat{v}_h =
\{v_h\} + \vect{\beta}\cdot \llbracket v_h \rrbracket 
\right\}. 
\end{equation}

On the other hand, when considering the DG scheme \eqref{WG_global} with
$\hat{\vect{p}}_h = \{\vect{p}_h\} + \eta_e h_e^{-1}\llbracket u_h
\rrbracket$, we obtain the following from \eqref{equ:HDG2LDG} that 
$$ 
\begin{aligned}
(\DGnabla^* \tilde{\vect{p}}_h, v_h) & =  
 (\vect{p}_h, \nabla v_h)_{\mathcal T_h} - \langle v_h,
 \hat{\vect{p}}_h \cdot \vect{n} \rangle_{\partial {\mathcal T_h}} \\
& =   (\vect{p}_h, \nabla v_h)_{\mathcal T_h} - \langle \{\vect{p}_h\}
+ \eta_e h_e^{-1} \llbracket u_h \rrbracket, \llbracket v_h
\rrbracket\rangle_{{\cal E}_h} \\
& = - ({\rm div} \vect{p}_h,  v_h)_{\mathcal T_h} + \langle
\vect{p}_h\cdot \vect{n}, \{v_h\} \rangle_{\partial {\mathcal T_h}} -
\sum_{K\in \mathcal{T}_h} 2\eta_e h_e^{-1} \langle u_h - \{u_h\}, v_h
-\{v_h\} \rangle_{\partial K} \\
& = -\big(\DGdiv \vect{p}_h, (v_h, \{v_h\})\big) - \sum_{K\in
\mathcal{T}_h} 2\eta_e h_e^{-1} \langle u_h - \{u_h\}, v_h -\{v_h\}
\rangle_{\partial K}.
\end{aligned}
$$ 
Hence, the (simplified) primal LDG will formally return to
the stabilized hybrid mixed methods \eqref{eq:hybridgalerkin} by
replacing $\{u_h\}$ and $\{v_h\}$ with the new trial variable
$\hat{u}_h$ and test variable $\hat{v}_h$, respectively.  In the same
way, it is readily seen that  mixed LDG methods \eqref{equ:traces-MLDG} can be
formally deduced from \comments{stabilized hybrid primal (WG)} methods
\eqref{eq:WGprimal} if the space is specified as 
\begin{equation} \label{equ:WG2MLDG-space} 
\widetilde{\vect{Q}}_h = \{(\vect{q}_h, \hat{\vect{q}}_h):~
\vect{q}_h \in \vect{Q}_h, \hat{\vect{q}}_h = \{\vect{q}_h\} \}. 
\end{equation}
And the mixed LDG methods \eqref{equ:traces-MLDG} can also formally return to
the WG methods \eqref{eq:WGprimal} by replacing $\{\vect{p}_h\}$ and
$\{\vect{q}_h\}$ with the new trial variable $\hat{\vect{p}}_h$ and test
variable $\hat{\vect{q}}_h$, respectively. 

\begin{remark}
In order to derive the other DG schemes in a similar fashion, we need to
introduce another stabilization to the hybrid method. For instance,
instead of $\mathcal{S}_u^\tau$, we can introduce another non-symmetric
stabilization term in the stabilized hybrid primal methods
\eqref{HDG:Stabilizer}, i.e.,
$$ 
\langle \tau (\{r_e(u_h\vect{n})\}\cdot \vect{n} - \hat{u}_h), v_h -
\hat{v}_h \rangle_{\partial \mathcal{T}_h}.
$$ 
In light of \eqref{equ:HDG2LDG}, when choosing the special space with
$\vect{\beta} = \vect{0}$, we obtain 
\begin{equation} \label{equ:HDG2Brezzi}
\begin{aligned}
& \quad - ({\rm div} \vect{p}_h,  v_h)_{\mathcal T_h} + \langle
\vect{p}_h\cdot \vect{n}, \hat{v}_h \rangle_{\partial \mathcal{T}_h} - 
\langle \tau (\{r_e(u_h\vect{n})\} \cdot \vect{n} -
\hat{u}_h), v_h - \hat{v}_h \rangle_{\partial \mathcal{T}_h} \\
& = (\vect{p}_h, \nabla v_h)_{\mathcal T_h} + \langle \{\vect{p}_h\},
  \llbracket \hat{v}_h - v_h\rrbracket \rangle_{{\cal E}_h} + \langle
  [\vect{p}_h], \{\hat{v}_h - v_h\} \rangle_{{\cal E}_h^i} 
 - \frac{1}{2} \langle \tau \llbracket r_e(u_h\vect{n}_e)\cdot
 \vect{n}_e - \hat{u}_h\rrbracket,
 \llbracket v_h -\hat{v}_h
 \rrbracket\rangle_{{\cal E}_h} \\
& = (\vect{p}_h, \nabla v_h)_{\mathcal T_h} - \langle
\{\vect{p}_h\} +
\frac{1}{2}\tau\{r_e(\llbracket
u_h \rrbracket)\}, \llbracket
v_h\rrbracket \rangle_{{\cal E}_h},
\end{aligned}
\end{equation}
which gives rise to the $\hat{\vect{p}}_h$ by Brezzi et al.
\cite{brezzi1999discontinuous} when $\tau = \mathcal{O}(1)$ (see the DG
Method of Brezzi et. al. in Table \ref{tab:DG-traces}).
Similarly, the following non-symmetric stabilization term can be
adopted for \eqref{WG:Stabilizer} in place of
$\mathcal{S}_{\vect{p}}^\eta$
$$ 
\langle \eta\left(\{r_e(\vect{p}_h)\} -
\hat{\vect{p}}_h \right) \cdot \vect{n} , (\vect{q}_h -
\hat{\vect{q}}_h) \cdot \vect{n} \rangle_{\partial {\mathcal T_h}}. 
$$ 
Then the mixed DG method of Brezzi et. al (see Remark
\ref{rmk:MDG-Brezzi}) can be derived when $\eta = \mathcal{O}(1)$. 
\end{remark}

\subsection{Mixed methods as the limiting case of WG methods}
For a given mesh, we will now try to prove the convergence of WG methods
\eqref{eq:WGprimal} to mixed methods \eqref{eq:mixedmethods} when
$\rho \to 0$, where the stabilization parameter is set as $\eta =
\rho^{-1}h_K^{-1}$.

Consider the $\boldsymbol H({\rm div})$ conforming subspace
$\boldsymbol Q^c_h := \boldsymbol Q_h\cap \boldsymbol H({\rm div},
\Omega)\subset \boldsymbol Q_h$, the mixed methods
\eqref{eq:mixedmethods} in variational form are written as: Find
$(\boldsymbol p^c_h, u^c_h) \in \boldsymbol Q^c_h \times V_h$ such
that  
\begin{equation}\label{mixedmethod}
\left\{
\begin{aligned}
(c\boldsymbol p^c_h,\boldsymbol q^c_h)_{\mathcal T_h} - (u^c_h,{\rm
div} \boldsymbol q^c_h)_{\mathcal T_h} &= (g_1, \boldsymbol
  q_h^c)_{\mathcal T_h}
~\qquad \qquad \qquad \quad \forall \boldsymbol q^c_h\in \boldsymbol Q^c_h,\\
 ({\rm div} \boldsymbol p^c_h, v_h)_{\mathcal T_h}&= (f,v_h)_{\mathcal
   T_h} + \langle g_2, v_h \rangle_{\partial \mathcal T_h} \qquad
   \forall v_h\in V_h,
\end{aligned}
\right.
\end{equation}
where $g_1 = 0$ and $g_2 = 0$ when applied to the Poisson equation
\eqref{HDIV}.  Then, by $V_h\subset {\rm div} \boldsymbol Q^c_h\subset
{\rm div}_h \boldsymbol Q_h\subset V_h$, the well-posedness of the
mixed methods (cf. \cite{brezzi1991mixed, boffi2013mixed}) implies
that  
\begin{equation} \label{equ:well-posedness-mixed}
\|\boldsymbol p_h^c\|_{\boldsymbol H({\rm div})} + \|v_h^c\| \leq C_M
\left( 
\|f\| + \sup_{\boldsymbol q_h^c \in \boldsymbol Q_h^c} \frac{(g_1,
  \boldsymbol q_h^c)_{\mathcal T_h}}{\|\boldsymbol
q_h^c\|_{\boldsymbol H({\rm div})}} + \sup_{\boldsymbol v_h \in V_h}
\frac{\langle g_2, v_h \rangle_{\partial \mathcal T_h}}{\|v_h\|}
\right).
\end{equation}

Recall that the spaces defined on $\mathcal E_h$ (see
\eqref{Edge:spaces}) of WG methods are given by 
$$ 
\widehat{\boldsymbol Q}_h = \{\hat{\boldsymbol p}_h: \hat{\boldsymbol
p}_h|_e \in \widehat{Q}(e)\vect{n}_e, \forall e\in \mathcal{E}_h\},
  \qquad \widehat{Q}_h = \{\hat{p}_h: \hat{p}_h|_{e}, e\in
  \widehat{Q}(e), \forall e\in \mathcal{E}_h\}.
$$ 
We make the following assumption on the finite element spaces of WG
methods.

\begin{assumption}\label{Ass:WG:mixed}
Assume that the spaces $\boldsymbol{Q}_h$, $\widehat{\boldsymbol Q}_h$
and $V_h$ satisfy
\begin{enumerate}
\item ${\rm div}_h \boldsymbol{Q}_h = V_h$;
\item $\{\!\!\{\boldsymbol{Q}_h\}\!\!\}|_e \subset \widehat Q(e),
~\forall e \in \mathcal{E}_h$;   
\item There exists a constant $C_{M}^I$ independent of $h$, such that
for any $\boldsymbol{p}_h \in \boldsymbol{Q}_h$, 
\begin{equation}\label{approx_mixed}
\inf_{\boldsymbol{p}^{I}_h\in
\boldsymbol{Q}^{c}_h} (\|\boldsymbol{p}^{I}_h-\boldsymbol{p}_h\|+\|{\rm
div}_h(\boldsymbol{p}^{I}_h-\boldsymbol{p}_h)\|) \leq
C_{M}^I \sum_{e\in
\mathcal{E}^i_h} h_e^{-1/2}\|[\boldsymbol{p}_h]\|_{0,e},
\end{equation}
where $\boldsymbol Q^c_h = \boldsymbol Q_h \cap \boldsymbol H({\rm
div}; \Omega)$.
\end{enumerate}
\end{assumption}

We note that the first assumption in Assumption \ref{Ass:WG:mixed}
ensures well-posedness of the mixed methods \eqref{mixedmethod}.
Several examples are given below.

\begin{example}\label{Example:WG:RT}
Raviart-Thomas type: $ \boldsymbol{Q}_h = \boldsymbol{Q}_h^{k, RT},
\widehat Q(e) = \mathcal{P}_k(e), V_h = V_h^k$, for $k \geq 0$. 
\end{example}

\begin{example}\label{Example:WG:BDM}
Brezzi-Douglas-Marini type: $
\boldsymbol{Q}_h = \boldsymbol Q_h^{k+1}, \widehat Q(e) =
\mathcal{P}_{k+1}(e), V_h = V_h^k$, for $k\geq 0$. 
\end{example}

\begin{lemma}
If we choose the spaces as in Example \ref{Example:WG:BDM} or Example
\ref{Example:WG:RT}, then Assumption \ref{Ass:WG:mixed} holds. 
\end{lemma}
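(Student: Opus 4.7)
The plan is to verify the three items of Assumption \ref{Ass:WG:mixed} for the RT and BDM choices in parallel, since both reduce to the same structural argument once the local shape function spaces are understood. I would dispose of items (1) and (2) first and then concentrate on the Oswald-type interpolation estimate (3), which is the main technical content.

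Items (1) and (2) follow from the local definitions. For (1), in the RT case the identity ${\rm div}(\boldsymbol x q) = (d+k) q$ for homogeneous polynomials $q$ of degree $k$, combined with ${\rm div}\,\boldsymbol {\mathcal{P}_k(K)} = \mathcal{P}_{k-1}(K)$, shows that ${\rm div}: \boldsymbol {\mathcal{P}_k(K)} + \boldsymbol x P_k(K) \to \mathcal{P}_k(K)$ is surjective, hence ${\rm div}_h \boldsymbol Q_h^{k,RT} = V_h^k$; the BDM case is immediate since ${\rm div}: \boldsymbol {\mathcal{P}_{k+1}(K)} \to \mathcal{P}_k(K)$ is surjective. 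For (2), the normal trace of any $\boldsymbol p_h|_K$ lies in $\mathcal{P}_k(e)$ for $\boldsymbol Q_h^{k,RT}$ and in $\mathcal{P}_{k+1}(e)$ for $\boldsymbol Q_h^{k+1}$, so the scalar average $\{\!\!\{\boldsymbol p_h\}\!\!\}|_e$ automatically lies in the prescribed $\widehat Q(e)$ in both cases.

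For item (3), I would construct the conforming interpolant $\boldsymbol p_h^I \in \boldsymbol Q_h^c$ by averaging the canonical face degrees of freedom: at each interior face $e = \partial K^+ \cap \partial K^-$ set the normal-trace dofs of $\boldsymbol p_h^I$ equal to the moments of $\tfrac{1}{2}(\boldsymbol p_h^+ + \boldsymbol p_h^-)\cdot \vect n_e$ against $\widehat Q(e)$; at boundary faces keep the one-sided trace; and retain $\boldsymbol p_h$'s interior (volume) dofs unchanged. By unisolvence this defines $\boldsymbol p_h^I$ uniquely in $\boldsymbol Q_h$, and its single-valued normal trace across $\mathcal E_h^i$ places it in $\boldsymbol Q_h^c$. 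On each $K$, $\boldsymbol p_h^I - \boldsymbol p_h$ is then a linear combination of face basis functions whose coefficients are moments of the jumps $[\boldsymbol p_h]$. A Piola pullback to the reference element, together with norm equivalence on the finite-dimensional reference shape space, yields the local estimate
\begin{equation*}
\|\boldsymbol p_h^I - \boldsymbol p_h\|_{0,K} \lesssim \sum_{e \subset \partial K \cap \mathcal E_h^i} h_e^{1/2}\,\|[\boldsymbol p_h]\|_{0,e},
\end{equation*}
the $h_e^{1/2}$ arising from the face Jacobian. Since ${\rm div}(\boldsymbol p_h^I - \boldsymbol p_h)|_K \in \mathcal{P}_k(K)$, an inverse inequality gives
\begin{equation*}
\|{\rm div}(\boldsymbol p_h^I - \boldsymbol p_h)\|_{0,K} \lesssim h_K^{-1}\|\boldsymbol p_h^I - \boldsymbol p_h\|_{0,K} \lesssim \sum_{e \subset \partial K \cap \mathcal E_h^i} h_e^{-1/2}\,\|[\boldsymbol p_h]\|_{0,e}.
\end{equation*}
Squaring, summing over $K \in \mathcal T_h$, and using the finite overlap of element boundaries (shape regularity) assembles these into \eqref{approx_mixed}, with $C_M^I$ depending only on shape regularity, the polynomial degree, and the spatial dimension $d$.

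The hard part will be carrying out the scaling argument cleanly in the RT case, where the $\boldsymbol x$-factor in the local shape space forces one to use the contravariant Piola transform to preserve both $\boldsymbol H({\rm div})$-conformity and the structure of the face degrees of freedom under change of variables, and to keep careful track of the powers of $h$ produced by the Jacobians of the element and face maps; the BDM case is analogous but simpler because the local space is purely polynomial and the transform reduces to a standard affine pushforward with a scalar Jacobian factor.
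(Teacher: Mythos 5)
Your construction of $\boldsymbol p_h^I$ by averaging the shared (face) degrees of freedom, followed by a scaling argument for the $L^2$ bound and an inverse inequality for the divergence part, is exactly the paper's argument — the paper's formula $d(\boldsymbol p_h^I)=\frac{1}{|\mathcal T_d|}\sum_{K\in\mathcal T_d}d(\boldsymbol p_h|_K)$ reduces to your face-averaging since volume dofs are shared by only one element. Your additional verification of items (1) and (2) of Assumption \ref{Ass:WG:mixed}, which the paper omits, is also correct.
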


\begin{proof}
We only sketch the proof of \eqref{approx_mixed} in Assumption
\ref{Ass:WG:mixed}. Denote the set of degrees of freedom of RT or BDM
element by $D$, see \cite{brezzi1991mixed, boffi2013mixed}. We then
define $\boldsymbol p_h^I$ as 
$$ 
d(\boldsymbol p_h^I) = \frac{1}{|\mathcal T_d|} \sum_{K \in \mathcal
  T_d} d(\boldsymbol p_h|_T) \qquad \forall d \in D,
$$ 
where $\mathcal T_d$ denotes the set of elements that share the degrees
of freedom $d$ and $|\mathcal T_d|$ denotes the cardinality of this
set. By the standard scaling argument, 
$$ 
\sum_{K \in \mathcal T_h} \|\boldsymbol p_h^I - \boldsymbol p_h\|
\lesssim \sum_{e\in \mathcal E_h^i} h_e^{1/2} \|[\boldsymbol
p_h]\|_{0,e}.
$$ 
Then \eqref{approx_mixed} follows from the inverse inequality.
\end{proof}

We rewrite the WG methods \eqref{eq:WGprimal} in the variational
form as: Find $(\boldsymbol{p}^{\eta}_h, u^{\eta}_h, \hat{\boldsymbol
p}^{\eta}_h)\in \boldsymbol Q_h\times V_h\times \widehat{\boldsymbol
Q}_h$ such that for any  $(\boldsymbol{q}_h, v_h, \hat{\boldsymbol
q}_h)\in \boldsymbol Q_h\times V_h\times \widehat{\boldsymbol Q}_h$
\begin{equation} \label{newWG}
\left\{
\begin{aligned}
(c \boldsymbol{p}^{\eta}_h, \boldsymbol{q}_h)_{\mathcal T_h} +
\rho^{-1} \langle h^{-1} (\boldsymbol p_h^\eta - \hat{\boldsymbol
    p}_h) \cdot \vect{n}, (\boldsymbol q_h - \hat{\boldsymbol
    q}_h) \cdot \vect{n} \rangle_{\partial \mathcal T_h} + 
(\nabla u_h, q_h)_{\mathcal T_h} - \langle u_h, \hat{\boldsymbol q}_h
\cdot \vect{n} \rangle_{\partial \mathcal T_h} & = 0,\\
-(\boldsymbol p_h^\eta, \nabla v_h)_{\partial \mathcal T_h} + \langle
\hat{\boldsymbol p}_h^\eta \cdot \vect{n}, v_h \rangle_{\partial
  \mathcal{T}_h}& = (f, v_h).
\end{aligned}
\right.
\end{equation}

\begin{theorem} \label{thm:WG-mixed}
Under the Assumption \ref{Ass:WG:mixed}, WG methods
\eqref{eq:WGprimal} converge to the mixed methods
\eqref{eq:mixedmethods} as $\rho \to 0$ with $\eta =
\rho^{-1}h_K^{-1}$.  More precisely, we have  
\begin{equation} \label{equ:WG-mixed}
\|\boldsymbol{p}^{\eta}_h-\boldsymbol{p}^{c}_h\|_{\boldsymbol H_h({\rm
  div})}+\|u^{\eta}_h-u_h^c\| \leq C_{w,3}\rho^{1/2}\|f\|,
\end{equation}
where $C_{w,3}$ is independent of both mesh size $h$ and $\rho$.
\end{theorem}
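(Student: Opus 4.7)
The plan is to show that $\boldsymbol p_h^\eta$ is close (in the $\boldsymbol H({\rm div})$-sense and uniformly in $\rho$) to some conforming $\boldsymbol p_h^{\eta,I}\in\boldsymbol Q_h^c$, then use the well-posedness of the mixed method \eqref{mixedmethod} to compare $\boldsymbol p_h^{\eta,I}$ with $\boldsymbol p_h^c$. The starting point is Corollary~\ref{uniform_stable_WG} with $\eta=\rho^{-1}h_K^{-1}$, which gives the stability bound $\|\tilde{\boldsymbol p}_h^\eta\|_{\widetilde{\rm div},\rho,h}+\|u_h^\eta\|\lesssim \|f\|$. Reading off the stabilization contribution in this norm and absorbing the $\rho^{-1}$ weight yields
\begin{equation*}
\sum_{K\in\mathcal T_h} h_K^{-1}\bigl\|(\boldsymbol p_h^\eta-\hat{\boldsymbol p}_h^\eta)\cdot\vect n_K\bigr\|_{0,\partial K}^2 \lesssim \rho\,\|f\|^2,
\end{equation*}
so the jump of $\boldsymbol p_h^\eta\cdot\vect n$ across each interior edge $e$ is small: since $\hat{\boldsymbol p}_h^\eta$ is single-valued in the direction $\vect n_e$, we have $[\boldsymbol p_h^\eta]|_e=(\boldsymbol p_h^{\eta,+}-\hat{\boldsymbol p}_h^\eta)\cdot\vect n^+ +(\boldsymbol p_h^{\eta,-}-\hat{\boldsymbol p}_h^\eta)\cdot\vect n^-$, which together with the previous display gives $\sum_{e\in\mathcal E_h^i}h_e^{-1}\|[\boldsymbol p_h^\eta]\|_{0,e}^2\lesssim \rho\,\|f\|^2$.

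Next I would invoke Assumption~\ref{Ass:WG:mixed}(3) to pick $\boldsymbol p_h^{\eta,I}\in\boldsymbol Q_h^c$ with
\begin{equation*}
\|\boldsymbol p_h^\eta-\boldsymbol p_h^{\eta,I}\|+\|{\rm div}_h(\boldsymbol p_h^\eta-\boldsymbol p_h^{\eta,I})\|\lesssim \rho^{1/2}\|f\|.
\end{equation*}
Then I would test the WG equations \eqref{newWG} with $(\boldsymbol q_h^c,\hat{\boldsymbol q}_h)$ where $\boldsymbol q_h^c\in\boldsymbol Q_h^c$ and $\hat{\boldsymbol q}_h=(\boldsymbol q_h^c\cdot\vect n_e)\vect n_e$; this is legitimate because Assumption~\ref{Ass:WG:mixed}(2) ensures $\boldsymbol q_h^c\cdot\vect n_e\in\widehat Q(e)$, and it has the crucial property that $(\boldsymbol q_h^c-\hat{\boldsymbol q}_h)\cdot\vect n_K\equiv 0$, which kills the penalty term. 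After integration by parts this yields the identity
\begin{equation*}
(c\boldsymbol p_h^\eta,\boldsymbol q_h^c)_{\mathcal T_h}-(u_h^\eta,{\rm div}\boldsymbol q_h^c)_{\mathcal T_h}=0\qquad\forall\boldsymbol q_h^c\in\boldsymbol Q_h^c.
\end{equation*}
Subtracting the first equation of \eqref{mixedmethod} and splitting $\boldsymbol p_h^\eta=\boldsymbol p_h^{\eta,I}+(\boldsymbol p_h^\eta-\boldsymbol p_h^{\eta,I})$ gives a mixed equation for $(\boldsymbol p_h^{\eta,I}-\boldsymbol p_h^c,u_h^\eta-u_h^c)\in\boldsymbol Q_h^c\times V_h$ with right-hand side $-(c(\boldsymbol p_h^\eta-\boldsymbol p_h^{\eta,I}),\boldsymbol q_h^c)$, whose operator norm is $\lesssim \rho^{1/2}\|f\|$.

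The companion equation comes from the second WG equation, which after integration by parts reads
\begin{equation*}
({\rm div}_h\boldsymbol p_h^\eta,v_h)_{\mathcal T_h}-\langle(\boldsymbol p_h^\eta-\hat{\boldsymbol p}_h^\eta)\cdot\vect n,v_h\rangle_{\partial\mathcal T_h}=(f,v_h)_{\mathcal T_h}.
\end{equation*}
Subtracting the second equation of \eqref{mixedmethod} and again splitting $\boldsymbol p_h^\eta$ yields, for all $v_h\in V_h$,
\begin{equation*}
({\rm div}(\boldsymbol p_h^{\eta,I}-\boldsymbol p_h^c),v_h)_{\mathcal T_h}=\langle(\boldsymbol p_h^\eta-\hat{\boldsymbol p}_h^\eta)\cdot\vect n,v_h\rangle_{\partial\mathcal T_h}-({\rm div}_h(\boldsymbol p_h^\eta-\boldsymbol p_h^{\eta,I}),v_h)_{\mathcal T_h}.
\end{equation*}
The first term on the right is controlled by Cauchy--Schwarz plus the discrete trace inequality $h_K\|v_h\|_{\partial K}^2\lesssim\|v_h\|_K^2$ (for polynomials), yielding a bound $\lesssim \rho^{1/2}\|f\|\|v_h\|$; the second term is $\lesssim \rho^{1/2}\|f\|\|v_h\|$ by the already established approximation estimate. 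Now I would apply the well-posedness \eqref{equ:well-posedness-mixed} of the conforming mixed method (valid because ${\rm div}\boldsymbol Q_h^c=V_h$, ensured by Assumption~\ref{Ass:WG:mixed}(1) in combination with standard mixed-element facts) to the pair $(\boldsymbol p_h^{\eta,I}-\boldsymbol p_h^c,u_h^\eta-u_h^c)$ and conclude
\begin{equation*}
\|\boldsymbol p_h^{\eta,I}-\boldsymbol p_h^c\|_{\boldsymbol H({\rm div})}+\|u_h^\eta-u_h^c\|\lesssim \rho^{1/2}\|f\|.
\end{equation*}
A final triangle inequality, combined with the approximation bound for $\boldsymbol p_h^\eta-\boldsymbol p_h^{\eta,I}$, delivers \eqref{equ:WG-mixed}. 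The main technical obstacle is the bookkeeping in step~4: choosing the matching numerical trace $\hat{\boldsymbol q}_h$ in the right space $\widehat{\boldsymbol Q}_h$ so that the penalty term vanishes while still testing against the full conforming subspace $\boldsymbol Q_h^c$. Once that is set up correctly, the remaining estimates are standard uses of trace/inverse inequalities and the Brezzi theory for mixed methods.
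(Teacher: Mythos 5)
Your proposal is correct and follows essentially the same route as the paper's proof: test the WG system with conforming fluxes and matching traces to kill the penalty term and recover the conforming first equation, use Assumption \ref{Ass:WG:mixed}(3) to produce an $\boldsymbol H({\rm div})$-conforming approximant controlled by the normal jumps, apply the well-posedness \eqref{equ:well-posedness-mixed} of the mixed method to the resulting perturbed error system, and finally control the jump/penalty terms by $\rho^{1/2}\|f\|$ via Corollary \ref{uniform_stable_WG}. The only (immaterial) difference is organizational: you extract the jump bound from the stability estimate up front and fix the interpolant $\boldsymbol p_h^{\eta,I}$ early, whereas the paper carries the infimum over conforming interpolants through the estimate and invokes the stability bound at the very end.
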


\begin{proof}
From the assumption $\{\!\!\{\boldsymbol Q_h\}\!\!\}|_e \subset
\widehat{Q}(e)$, by taking $\boldsymbol q_h = \boldsymbol q_h^c$ and
$\hat{\boldsymbol q}_h|_e = (\boldsymbol q_h^c \cdot
\vect{n}_e)\vect{n}_e$ in \eqref{newWG} and integrating by parts, we
see that $(\boldsymbol{p}_h^{\eta}, u_h^{\eta})$ satisfies 
\begin{equation} \label{equ:WG-conforming}
(c \boldsymbol{p}^{\eta}_h, \boldsymbol{q}^c_h)_{\mathcal
T_h}-(u^{\eta}_h,  {\rm div} \boldsymbol{q}^c_h)_{\mathcal T_h}=0
\qquad \forall \boldsymbol{q}^c_h\in \boldsymbol Q^c_h.
\end{equation}
Subtracting \eqref{mixedmethod} from \eqref{equ:WG-conforming} and
the second equation of \eqref{newWG}, we have 
\begin{equation}\label{mixederror}
\left\{
\begin{aligned}
(c (\boldsymbol{p}^{\eta}_h-\boldsymbol{p}^{c}_h),
\boldsymbol{q}^c_h)_{\mathcal T_h} 
- (u^{\eta}_h-u_h^c,  {\rm div} \boldsymbol{q}^c_h)_{\mathcal
T_h} &= 0 \qquad \qquad\qquad\qquad\qquad ~~\forall \boldsymbol{q}^c_h\in
\boldsymbol Q^c_h,\\
({\rm div}(\boldsymbol{p}^{\eta}_h - \boldsymbol{p}^{c}_h),
 v_h)_{\mathcal T_h} &=
\langle(\boldsymbol{p}^{\eta}_h-\hat{\boldsymbol p}^{\eta}_h)\cdot
\boldsymbol {n}, v_h\rangle_{\partial {\mathcal T_h}} \qquad  \forall
v_h \in V_h. 
\end{aligned}
\right.
\end{equation}
Noting that $\boldsymbol p_h^\eta \not\in \boldsymbol{Q}_h^c$, we have
that, for any  $\boldsymbol{p}^{I}_h\in \boldsymbol Q^c_h$, 
\begin{equation}\label{mixederror1}
\left\{
\begin{aligned}
(c (\boldsymbol{p}^{I}_h-\boldsymbol{p}^{c}_h),
\boldsymbol{q}^c_h)_{\mathcal T_h} - (u^{\eta}_h-u_h^c,  {\rm div}
\boldsymbol{q}^c_h)_{\mathcal T_h} &=
(c(\boldsymbol{p}^{I}_h-\boldsymbol{p}^{\eta}_h),\boldsymbol{q}^c_h)_{\mathcal
T_h}~~~\qquad\qquad\qquad\qquad\qquad\qquad \forall \boldsymbol{q}^c_h\in
\boldsymbol Q^c_h,\\
({\rm div} (\boldsymbol{p}^{I}_h-\boldsymbol{p}^{c}_h),v_h)_{\mathcal
T_h} &= \langle(\boldsymbol{p}^{\eta}_h - \hat{\boldsymbol
p}^{\eta}_h)\cdot \boldsymbol{n}, v_h\rangle_{\partial \mathcal T_h} 
+({\rm div}(\boldsymbol{p}^{I}_h-\boldsymbol{p}^{\eta}_h),v_h)_{\mathcal
  T_h} \qquad  \forall  v_h \in V_h. 
\end{aligned}
\right.
\end{equation}
Because $(\boldsymbol{p}^{I}_h-\boldsymbol{p}^{c}_h)\in
\boldsymbol Q^c_h, (u^{\eta}_h-u_h^c)\in V_h$, by the well-posedness
of the mixed methods \eqref{equ:well-posedness-mixed}, trace
inequality, inverse inequality and Cauchy inequality, we have
$$ 
\begin{aligned}
& \|\boldsymbol{p}^{I}_h-\boldsymbol{p}^{c}_h\|_{\boldsymbol H({\rm
div}) }+\|u^{\eta}_h-u_h^c\| \\
\leq &~ C_M\left( \sup_{\boldsymbol{q}^c_h\in
\boldsymbol{Q}^c_h} \frac{(c(\boldsymbol{p}^{I}_h -
\boldsymbol{p}^{\eta}_h), \boldsymbol{q}^c_h)_{\mathcal
T_h}}{\|\boldsymbol{q}^c_h\|_{\boldsymbol H({\rm div})}} 
+\sup_{v_h\in V_h} \frac{\langle(\boldsymbol{p}^{\eta}_h - \hat
{\boldsymbol p}^{\eta}_h)\cdot \boldsymbol {n}, v_h\rangle_{\partial
{\mathcal T_h}} + ({\rm
div}(\boldsymbol{p}^{I}_h-\boldsymbol{p}^{\eta}_h),v_h)_{\mathcal
T_h}}{\|v_h\|} \right)\\
\lesssim &~ 
\|\boldsymbol{p}^{I}_h-\boldsymbol{p}^{\eta}_h\|+\|{\rm
div}_h(\boldsymbol{p}^{I}_h-\boldsymbol{p}^{\eta}_h)\|+
\langle h^{-1}(\boldsymbol{p}^{\eta}_h-\hat {\boldsymbol
p}^{\eta}_h)\cdot \boldsymbol {n},
(\boldsymbol{p}^{\eta}_h-\hat{\boldsymbol p}^{\eta}_h)\cdot
\boldsymbol {n}\rangle_{\partial \mathcal T_h}^{1/2}.
\end{aligned}
$$
Hence, by Assumption \ref{Ass:WG:mixed} and inverse inequality, we have
$$ 
\begin{aligned}
\|\boldsymbol{p}^{\eta}_h-\boldsymbol{p}^{c}_h\|_{\boldsymbol H({\rm
    div})}+\|u^{\eta}_h-u_h^c\| 
& \lesssim \langle h^{-1}(\boldsymbol{p}^{\eta}_h-\hat {\boldsymbol
p}^{\eta}_h)\cdot \boldsymbol {n},
(\boldsymbol{p}^{\eta}_h-\hat{\boldsymbol p}^{\eta}_h)\cdot
\boldsymbol {n}\rangle_{\partial \mathcal T_h}^{1/2}
+ \inf_{\boldsymbol p_h^I \in \boldsymbol Q_h^c}
\left( \|\boldsymbol{p}^{I}_h-\boldsymbol{p}^{\eta}_h\| 
+ \|{\rm div}_h(\boldsymbol{p}^{I}_h-\boldsymbol{p}^{\eta}_h)\| 
\right) \\
& \lesssim \langle h^{-1}(\boldsymbol{p}^{\eta}_h-\hat {\boldsymbol
p}^{\eta}_h)\cdot \boldsymbol {n},
(\boldsymbol{p}^{\eta}_h-\hat{\boldsymbol p}^{\eta}_h)\cdot
\boldsymbol {n}\rangle_{\partial \mathcal T_h}^{1/2} + \sum_{e\in
  \mathcal{E}_h^i} h_e^{-1/2}\|[\boldsymbol p_h^\eta]\|_{0,e}.
\end{aligned}
$$
From the fact that 
$$
\langle(\boldsymbol{p}^{\eta}_h-\boldsymbol {\hat p}^{\eta}_h)\cdot
\boldsymbol {n}, (\boldsymbol{p}^{\eta}_h-\boldsymbol {\hat
p}^{\eta}_h)\cdot \boldsymbol {n}\rangle_{\partial {\mathcal T_h}}
= 2\langle \{\!\!\{\boldsymbol{p}^{\eta}_h-\boldsymbol {\hat
  p}^{\eta}_h\}\!\!\},\{\!\!\{\boldsymbol{p}^{\eta}_h-\boldsymbol
{\hat p}^{\eta}_h\}\!\!\}\rangle_{\mathcal E_h}+\frac{1}{2}\langle
[\boldsymbol{p}^{\eta}_h],[\boldsymbol{p}^{\eta}_h]\rangle_{\mathcal
  E_h},
$$
we obtain  
$$ 
\begin{aligned}
\|\boldsymbol{p}^{\eta}_h-\boldsymbol{p}^{c}_h\|_{\boldsymbol H({\rm
div})}+\|u^{\eta}_h-u_h^c\| 
& \lesssim \langle h^{-1}(\boldsymbol{p}^{\eta}_h-\hat {\boldsymbol
p}^{\eta}_h)\cdot \boldsymbol {n},
(\boldsymbol{p}^{\eta}_h-\hat{\boldsymbol p}^{\eta}_h)\cdot
\boldsymbol {n}\rangle_{\partial \mathcal T_h}^{1/2} + \sum_{e\in
  \mathcal{E}_h^i} h_e^{-1/2}\|[\boldsymbol p_h^\eta]\|_{0,e} \\
& \lesssim \langle h^{-1}(\boldsymbol{p}^{\eta}_h-\hat {\boldsymbol
p}^{\eta}_h)\cdot \boldsymbol {n},
(\boldsymbol{p}^{\eta}_h-\hat{\boldsymbol p}^{\eta}_h)\cdot
\boldsymbol {n}\rangle_{\partial \mathcal T_h}^{1/2} \lesssim
\rho^{1/2}\|f\|,
\end{aligned}
$$ 
where we used Corollary \ref{uniform_stable_WG} in the last
step. This completes the proof. 
\end{proof}

\paragraph{Numerical examples on the convergence from WG methods
to mixed methods}
We present some numerical examples to support the theoretical results.
We consider the 2D Poisson problem described in
\eqref{equ:numerical-example}.  A uniform grid with $h = 1/4$ is fixed
for different $\rho$'s with $\eta = \rho^{-1}h_K^{-1}$.  First, we
choose the RT-type discrete spaces in WG methods, see Example
\ref{Example:WG:RT}.  When $\rho \to 0$, WG methods do converge to the
mixed methods, see Table \ref{tab:WG2RT}. 

\begin{table}[!htbp] 
\centering
\begin{subtable}{\textwidth} 
\centering
\caption{From WG to RT: $\eta = \rho^{-1}h^{-1}$, $k=0$, uniform grid}
\begin{tabular}{c|cc|cc|cc}
\hline
$\rho$	& $\|u_h^\eta -u_h^{c,{\rm RT}}\|$ & $\rho^\alpha$
& $\|\boldsymbol{p}_h^\eta - \boldsymbol{p}_h^{c, {\rm RT}}\|_0$ 
& $\rho^\alpha$ & $\|{\rm div}_h (\boldsymbol{p}_h^\eta -
\boldsymbol{p}_h^{c,{\rm RT}}) \|$ & $\rho^\alpha$ \\ \hline
1/4 & 0.003539 & -- & 0.025589 & -- & 0.101364 & -- \\ \hline
1/8 & 0.001777 & 0.99 & 0.012850 & 0.99 & 0.050819 & 1.00 \\ \hline
1/16 & 0.000890 & 1.00 & 0.006439 & 1.00 & 0.025444 & 1.00 \\ \hline
\end{tabular}
\label{tab:WG-P1-P0-RT}
\end{subtable} %

\bigskip
\begin{subtable}{\textwidth} 
\centering
\caption{From WG to RT: $\eta = \rho^{-1}h^{-1}$, $k=1$, uniform grid}
\begin{tabular}{c|cc|cc|cc}
\hline
$\rho$	& $\|u_h^\eta -u_h^{c,{\rm RT}}\|$ & $\rho^\alpha$
& $\|\boldsymbol{p}_h^\eta - \boldsymbol{p}_h^{c, {\rm RT}}\|_0$ 
& $\rho^\alpha$ & $\|{\rm div}_h (\boldsymbol{p}_h^\eta -
\boldsymbol{p}_h^{c,{\rm RT}}) \|$ & $\rho^\alpha$ \\ \hline
1/4 & 0.0003681 & -- & 0.004955 & -- & 0.102957 & -- \\ \hline
1/8 & 0.0001843 & 1.00 & 0.002482 & 1.00 & 0.051582 & 1.00 \\ \hline
1/16 & 0.0000922 & 1.00 & 0.001242 & 1.00 & 0.025817 & 1.00 \\ \hline
\end{tabular}
\label{tab:WG-P2-P1-RT}
\end{subtable} 
\caption{Convergence rate from WG to RT mixed methods on 2D uniform
  grids}
\label{tab:WG2RT}
\end{table}

Next, we choose the BDM-type discrete spaces in WG methods, see
Example \ref{Example:WG:BDM}. When $\rho \to 0$, WG methods do
converge to BDM, see Table \ref{tab:WG2BDM}. Further, we observe
the first-order convergence on $\rho$ in both Table \ref{tab:WG2RT}
and Table \ref{tab:WG2BDM}, which is $1/2$-order higher than our
theoretical finding in Theorem \ref{thm:WG-mixed}.

\begin{table}[!htbp] 
\centering
\begin{subtable}{\textwidth} 
\centering
\caption{From WG to BDM: $\eta = \rho^{-1}h^{-1}$, $k=0$, uniform grid}
\begin{tabular}{c|cc|cc|cc}
\hline
$\rho$	& $\|u_h^\eta -u_h^{c,{\rm BDM}}\|$ & $\rho^\alpha$
& $\|\boldsymbol{p}_h^\eta - \boldsymbol{p}_h^{c, {\rm BDM}}\|_0$ 
& $\rho^\alpha$ & $\|{\rm div}_h (\boldsymbol{p}_h^\eta -
\boldsymbol{p}_h^{c,{\rm BDM}}) \|$ & $\rho^\alpha$ \\ \hline
1/4 & 0.005046 & -- & 0.045969 & -- & 0.096506 & -- \\ \hline
1/8 & 0.002547 & 0.99 & 0.023223 & 0.99 & 0.048526 & 0.99 \\ \hline
1/16 & 0.001280 & 0.99 & 0.011672 & 0.99 & 0.024332 & 1.00 \\ \hline
\end{tabular}
\label{tab:WG-P1-P0-BDM}
\end{subtable} %

\vspace{1mm}
\begin{subtable}{\textwidth} 
\centering
\caption{From WG to BDM: $\eta = \rho^{-1}h^{-1}$, $k=1$, uniform grid}
\begin{tabular}{c|cc|cc|cc}
\hline
$\rho$	& $\|u_h^\eta -u_h^{c,{\rm BDM}}\|$ & $\rho^\alpha$
& $\|\boldsymbol{p}_h^\eta - \boldsymbol{p}_h^{c, {\rm BDM}}\|_0$ 
& $\rho^\alpha$ & $\|{\rm div}_h (\boldsymbol{p}_h^\eta -
\boldsymbol{p}_h^{c,{\rm BDM}}) \|$ & $\rho^\alpha$ \\ \hline
1/4 & 0.000617 & -- & 0.009329 & -- & 0.102282 & -- \\ \hline
1/8 & 0.000310 & 0.99 & 0.004683 & 0.99 & 0.051316 & 1.00 \\ \hline
1/16 & 0.000155 & 1.00 & 0.002346 & 1.00 & 0.025702 & 1.00 \\ \hline
\end{tabular}
\label{tab:WG-P2-P1-BDM}
\end{subtable} 
\caption{Convergence rate from WG to BDM mixed methods on 2D uniform
  grids}
\label{tab:WG2BDM}
\end{table}

\subsection{Primal methods as the limiting case of HDG methods}
For a given mesh, we next try to prove that the HDG methods
\eqref{eq:hybridgalerkin} converge to primal methods \eqref{eq:primal}
when $\rho \to 0$ and the stabilization parameter is set to be
$\tau = \rho^{-1}h_K^{-1}$.  

Consider the $H^1$ conforming subspace $ V^c_h=V_h\cap
H^1_0(\Omega)\subset V_h$, then the primal methods \eqref{eq:primal} in the
variational form are written as: Find  $(u^c_h, \boldsymbol p^c_h)\in
V^c_h\times \boldsymbol Q_h$ such that  
\begin{equation} \label{primalmethod}
\left\{
\begin{aligned}
(c\boldsymbol p^c_h, \boldsymbol q_h)_{\mathcal T_h}+ (\nabla
u^c_h,\boldsymbol q_h)_{\mathcal T_h} &= (\boldsymbol g_1, \boldsymbol
  q_h)_{\mathcal T_h} + \langle g_2, \boldsymbol q_h\cdot
\vect{n}\rangle_{\partial \mathcal T_h}\qquad
\forall \boldsymbol q_h\in \boldsymbol Q_h,\\
-(\boldsymbol p^c_h, \nabla v^c_h)_{\mathcal T_h} &=
(f,v^c_h)_{\mathcal T_h}\qquad\qquad\qquad\qquad\qquad \forall
v^c_h\in V^c_h,
\end{aligned}
\right.
\end{equation}
where $\boldsymbol g_1=0$ and $g_2 = 0$ when applied to the Poisson
equation \eqref{H1}.  Then, by $\nabla  V^c_h\subset \nabla_h
V_h\subset \boldsymbol Q_h$, the well-posedness of the primal methods
(cf.  \cite{brenner2007mathematical}) implies that  
\begin{equation} \label{primalstability}
 \|\boldsymbol{p}^c_h\| + \|{u}^c_h\|_{1}  \leq C_p\left( \|f\|_{-1,h}
     + \sup_{\boldsymbol q_h \in \boldsymbol Q_h} \frac{(\boldsymbol
       g_1, \boldsymbol q_h)_{\mathcal T_h} + \langle g_2, \boldsymbol
     q_h\cdot \boldsymbol n\rangle_{\partial \mathcal
     T_h}}{\|\boldsymbol q_h\|} \right),
\end{equation}
where $\|f\|_{-1,h}=\sup\limits_{v^c_h\neq 0,v^c_h\in
V_h^c}\frac{(f,v^c_h)_{\mathcal T_h}}{\|{v}^c_h\|_{1}}$.

Recall that the space define on $\mathcal E_h$ (see
\eqref{Edge:spaces}) of HDG methods is given by  
$$ 
\widehat{V}_h = \{\hat{v}_h: \hat{v}_h|_e\in \widehat{V}(e), \forall e
\in \mathcal E_h^i, \hat{v}_h|_{\mathcal E_h^\partial} = 0\}.
$$ 
We make the following assumption on the finite element spaces of HDG
methods. 

\begin{assumption} \label{Ass:HDG:Primal} 
Assume that the spaces $\boldsymbol{Q}_h, V_h$ and $\widehat V_h$
satisfy
\begin{enumerate}
\item $\nabla_h V_h \subset \boldsymbol{Q}_h$;
\item $\{V_h\}|_e \subset \widehat V(e)$, ~$\forall e \in \mathcal E_h^i$;   
\item There exists a constant $C_p^I$ independent of $h$, such that
for any $u_h\in V_h$, 
\begin{equation} \label{jump}
\inf_{u_h^I \in V_h^c} \left( \|(u_h^I - u_h^c\| + \|\nabla_h
      (u_h^I-u_h^c)\| \right) \leq C_p^I
\sum_{e\in \mathcal{E}_h} h^{-1/2}_e \|\llbracket u_h \rrbracket
\|_{0,e},
\end{equation}
where $V_h^c = V_h \cap H^1(\Omega)$.
\end{enumerate}
\end{assumption}

We note that the first assumption in Assumption \ref{Ass:HDG:Primal}
ensures the well-posedness of the primal methods \eqref{primalmethod}.
The following example satisfies Assumption \ref{Ass:HDG:Primal}
(see the conforming relatives in \cite{brenner2005c,
brenner2007mathematical}).

\begin{example} \label{Example:HDG:primal}
$\boldsymbol{Q}_h = \boldsymbol{Q}_h^{k}, V_h= V_h^{k+1},
  \widehat{V}(e) = \mathcal{P}_{k+1}(e)$, for $k \geq 0$. 
\end{example}

We rewrite the HDG methods \eqref{eq:hybridgalerkin} in the
variational form as: Find $(\boldsymbol{p}_h^{\tau},u_h^{\tau}, \hat
u_h^{\tau})\in \boldsymbol Q_h\times V_h\times \widehat{V}_h$ such
that for any $(\boldsymbol q_h, v_h, \hat{v}_h) \in \boldsymbol{Q}_h
\times V_h \times \widehat{V}_h$
\begin{equation} \label{newHDG}
\left\{
\begin{aligned}
(c \boldsymbol{p}_h^{\tau}, \boldsymbol{q}_h)_{\mathcal T_h} -
(u_h^\tau, {\rm div}\boldsymbol{q}_h)_{\mathcal T_h} + \langle
\hat{u}_h^\tau ,\boldsymbol q_h \cdot \vect{n}\rangle_{\partial
  \mathcal T_h} &= 0, \\
({\rm div}\boldsymbol p_h^\tau, v_h)_{\mathcal T_h} - \langle
\boldsymbol{p}_h^\tau \cdot \vect{n}, \hat{v}_h\rangle_{\partial \mathcal
  T_h} + \rho^{-1} \langle h^{-1} (u_h^\tau - \hat{u}_h^\tau), v_h -
\hat{v}_h \rangle_{\partial \mathcal T_h} &= (f, v_h)_{\mathcal
T_h}.
\end{aligned}
\right.
\end{equation}

\begin{theorem} \label{thm:HDG-primal}
Under the Assumption \ref{Ass:HDG:Primal}, the HDG methods
\eqref{eq:hybridgalerkin} converge to the primal methods
\eqref{eq:primal} as $\rho \to 0$ with $\tau = \rho^{-1}h_K^{-1}$.
More precisely, we have  
\begin{equation} \label{equ:HDG2primal}
\|\boldsymbol{p}^{\tau}_h-\boldsymbol{p}^{c}_h\|+\|u^{\tau}_h-u_h^c\|_{1,h}
\leq C_{d,3}\rho^{1/2}\|f\|_{-\tilde 1,\rho, h},
\end{equation}
where $C_{d,3}$ is independent of both mesh size $h$ and $\rho$, and
$\|f\|_{-\tilde 1,\rho,h} = \sup\limits_{\tilde{v}_h \in \widetilde{V}_h} \frac{(f,
v_h)_{\mathcal{T}_h}}{\|\tilde{v}_h\|_{\tilde{1},\rho,h}}$.
\end{theorem}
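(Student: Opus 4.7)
The plan is to mirror the proof of Theorem \ref{thm:WG-mixed} with the roles of $\boldsymbol{p}$ and $u$ interchanged. First, I would exploit Assumption \ref{Ass:HDG:Primal}(2) to extract a primal-type equation for $\boldsymbol{p}_h^\tau$: for any $v_h^c \in V_h^c$, the trace $v_h^c|_{\mathcal{E}_h^i}$ lies in $\widehat{V}_h$ (with zero extension on $\mathcal{E}_h^\partial$ since $v_h^c=0$ there), so the test pair $(v_h,\hat{v}_h)=(v_h^c, v_h^c|_{\mathcal{E}_h^i})$ is admissible in \eqref{newHDG}. Because $v_h-\hat v_h$ vanishes identically on $\partial\mathcal{T}_h$ under this choice, both the coupling term $\langle \boldsymbol{p}_h^\tau\cdot\vect{n}, v_h-\hat v_h\rangle$ and the stabilization term disappear, and integration by parts of the second equation of \eqref{newHDG} yields
$$-(\boldsymbol{p}_h^\tau, \nabla v_h^c)_{\mathcal{T}_h} = (f, v_h^c)_{\mathcal{T}_h} \qquad \forall v_h^c \in V_h^c.$$
Subtracting the second line of \eqref{primalmethod} gives the orthogonality $(\boldsymbol{p}_h^\tau-\boldsymbol{p}_h^c, \nabla v_h^c)_{\mathcal{T}_h}=0$ for all $v_h^c\in V_h^c$.

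For the first equation, integration by parts of \eqref{newHDG} together with $\nabla_h V_h\subset\boldsymbol{Q}_h$ from Assumption \ref{Ass:HDG:Primal}(1) produces
$$(c\boldsymbol{p}_h^\tau, \boldsymbol{q}_h)_{\mathcal{T}_h} + (\nabla_h u_h^\tau, \boldsymbol{q}_h)_{\mathcal{T}_h} - \langle u_h^\tau-\hat{u}_h^\tau, \boldsymbol{q}_h\cdot\vect{n}\rangle_{\partial\mathcal{T}_h}=0 \qquad \forall \boldsymbol{q}_h\in\boldsymbol{Q}_h.$$
Subtracting the first line of \eqref{primalmethod} and inserting the conforming interpolant $u_h^I\in V_h^c$ of $u_h^\tau$ supplied by Assumption \ref{Ass:HDG:Primal}(3), I see that $(\boldsymbol{p}_h^\tau-\boldsymbol{p}_h^c,\, u_h^I-u_h^c)\in\boldsymbol{Q}_h\times V_h^c$ solves a perturbed primal system of the form \eqref{primalmethod} with data $\boldsymbol{g}_1=\nabla_h(u_h^I-u_h^\tau)$, $g_2=u_h^\tau-\hat{u}_h^\tau$, and $f=0$. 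Applying the well-posedness estimate \eqref{primalstability} and controlling the boundary source via the usual trace-inverse inequality $\|\boldsymbol{q}_h\cdot\vect{n}\|_{\partial K}\lesssim h_K^{-1/2}\|\boldsymbol{q}_h\|_K$ bounds
$$\|\boldsymbol{p}_h^\tau-\boldsymbol{p}_h^c\| + \|u_h^I-u_h^c\|_1 \lesssim \|\nabla_h(u_h^I-u_h^\tau)\| + \langle h^{-1}(u_h^\tau-\hat{u}_h^\tau), u_h^\tau-\hat{u}_h^\tau\rangle_{\partial\mathcal{T}_h}^{1/2}.$$

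The final step is to translate these two right-hand quantities into $\rho^{1/2}\|f\|_{-\tilde 1,\rho,h}$ by invoking Corollary \ref{uniform_stable}. Since $\tau=\rho^{-1}h_K^{-1}$, uniform stability yields $\rho^{-1}\langle h^{-1}(u_h^\tau-\hat{u}_h^\tau), u_h^\tau-\hat{u}_h^\tau\rangle_{\partial\mathcal{T}_h}\lesssim \|f\|^2_{-\tilde 1,\rho,h}$, and because $\hat{u}_h^\tau$ is single-valued on each face, the identity
$$\langle u_h^\tau-\hat{u}_h^\tau, u_h^\tau-\hat{u}_h^\tau\rangle_{\partial\mathcal{T}_h} = 2\langle\{u_h^\tau-\hat{u}_h^\tau\},\{u_h^\tau-\hat{u}_h^\tau\}\rangle_{\mathcal{E}_h} + \tfrac{1}{2}\langle\llbracket u_h^\tau\rrbracket, \llbracket u_h^\tau\rrbracket\rangle_{\mathcal{E}_h}$$
transfers this bound to $\sum_{e} h_e^{-1}\|\llbracket u_h^\tau\rrbracket\|_{0,e}^2\lesssim \rho\|f\|_{-\tilde 1,\rho,h}^2$. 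Assumption \ref{Ass:HDG:Primal}(3) then bounds $\|\nabla_h(u_h^I-u_h^\tau)\|$ by this jump quantity, and the triangle inequality $\|u_h^\tau-u_h^c\|_{1,h}\leq \|u_h^\tau-u_h^I\|_{1,h}+\|u_h^I-u_h^c\|_1$ delivers \eqref{equ:HDG2primal}. I expect the main obstacle to lie in the careful accounting of the $\rho^{1/2}$ factor: the $\rho$-weighted norm from Corollary \ref{uniform_stable} must be combined with the $h^{-1/2}$ trace-inverse scaling used to interpret $g_2=u_h^\tau-\hat u_h^\tau$ in the dual norm of \eqref{primalstability}, and the $h_e^{-1/2}$ weight appearing in Assumption \ref{Ass:HDG:Primal}(3) has to pair correctly with these scalings to yield a mesh-independent constant.
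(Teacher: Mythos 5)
Your proposal is correct and follows essentially the same route as the paper's proof: testing \eqref{newHDG} with a conforming pair $(v_h^c, v_h^c|_e)$ to kill the stabilization and coupling terms, subtracting \eqref{primalmethod}, inserting the conforming interpolant $u_h^I$ from Assumption \ref{Ass:HDG:Primal}(3) to view the error as a perturbed primal system, and then invoking \eqref{primalstability}, the trace--inverse scaling, the average/jump identity for $u_h^\tau-\hat u_h^\tau$, and Corollary \ref{uniform_stable} to extract the $\rho^{1/2}$ factor. The $\rho$-accounting you flag as the main obstacle works exactly as you describe, since $\|\tilde u_h\|_{\tilde 1,\rho,h}^2$ already carries the weight $\rho^{-1}h_K^{-1}$ on $\|u_h^\tau-\hat u_h^\tau\|_{0,\partial K}^2$.
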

\begin{proof}
From the assumption $\{V_h\}|_e \subset \widehat V(e)$, by taking $v_h
= v_h^c$ and $\hat{v}_h|_e = v_h^c|_e$ in \eqref{newHDG} and
integrating by parts, we see that 
\begin{equation} \label{HDG62}
-(\boldsymbol{p}_h^{\tau},\nabla v_h^c)_{\mathcal T_h} =
(f,v_h^c)_{\mathcal T_h} \qquad  \forall  v_h^c \in V_h^c.
\end{equation}
Subtracting \eqref{primalmethod} from the first equation of
\eqref{newHDG} and \eqref{HDG62}, we have 
\begin{equation}\label{eq:error}
\left\{
\begin{aligned}
(c(\boldsymbol p_h^{\tau}-\boldsymbol p^c_h),\boldsymbol
q_h)_{\mathcal T_h} + (\nabla u_h^{\tau} - \nabla u^c_h,\boldsymbol
  q_h)_{\mathcal T_h} &= \langle u_h^{\tau}-\hat u_h^{\tau},\boldsymbol
q_h\cdot \boldsymbol n\rangle_{\partial {\mathcal T_h}} \qquad
  \forall \boldsymbol q_h\in \boldsymbol Q_h, \\
-(\boldsymbol p_h^{\tau}-\boldsymbol p_h^c,\nabla v_h^c)_{\mathcal T_h}
&= 0  \qquad\qquad\qquad\qquad \qquad\forall v_h^c\in V_h^c.
\end{aligned}
\right.
\end{equation}
Again, for any $u_h^I \in V_h^c$, we have 
\begin{equation}\label{error}
\left\{
\begin{aligned}
(c(\boldsymbol p_h^{\tau}-\boldsymbol p^c_h), \boldsymbol
 q_h)_{\mathcal T_h} + (\nabla u_h^{I}-\nabla u^c_h,\boldsymbol
   q_h)_{\mathcal T_h} &= \langle u_h^{\tau}-\hat u_h^{\tau},\boldsymbol
 q_h\cdot \boldsymbol n\rangle_{\partial {\mathcal T_h}} + (\nabla
u_h^I - \nabla u_h^{\tau},\boldsymbol q_h)_{\mathcal T_h} \qquad
 \forall \boldsymbol q_h \in \boldsymbol Q_h, \\
-(\boldsymbol p_h^{\tau}-\boldsymbol p_h^c,\nabla v_h^c)_{\mathcal
  T_h} &= 0 \qquad \qquad \qquad \qquad \qquad \qquad\qquad \qquad
  ~~\quad\qquad \forall v_h^c \in V_h^c.
\end{aligned}
\right.
\end{equation}
Because $\boldsymbol p_h^{\tau}-\boldsymbol p^c_h\in \boldsymbol
Q_h$ and $v_h^{c}-u^c_h\in V_h^c$, using \eqref{primalstability},
trace inequality, inverse inequality and Cauchy inequality, we obtain
\begin{equation}\label{error1}
\begin{aligned}
\|\boldsymbol p_h^{\tau}-\boldsymbol p_h^{c}\|+\|u_h^I-u_h^c\|_1 &\leq
C_p\sup_{\boldsymbol q_h\in \boldsymbol Q_h}\frac{\langle
u_h^{\tau}-\hat u_h^{\tau},\boldsymbol q_h\cdot \boldsymbol
n\rangle_{\partial {\mathcal T_h}} + (\nabla u_h^{I}-\nabla
u^\tau_h,\boldsymbol q_h)_{\mathcal T_h}}{\|\boldsymbol q_h\|} \\
& \lesssim |u_h^I - u_h^\tau|_{1,h} + \langle h^{-1}(u_h^\tau -
\hat{u}_h^\tau), u_h^\tau - \hat{u}_h^\tau \rangle_{\partial \mathcal
T_h}^{1/2}. 
\end{aligned}
\end{equation}
Noting that the local projection $\hat{P}_h$ in
\eqref{equ:HDG-norms-grad} is an identity operator as $\{V_h\}|_e
\subset \widehat{V}(e)$, and 
\begin{equation}\label{identity}
 \langle u_h^{\tau}-\hat{u}_h^{\tau},
 u_h^{\tau}-\hat{u}_h^{\tau}\rangle_{\partial {\mathcal T_h}}
 =2\langle\{u_h^{\tau}-\hat{u}_h^{\tau}\},
 \{u_h^{\tau}-\hat{u}_h^{\tau}\}\rangle_{\mathcal{E}_h}+
 \frac{1}{2}\langle \lbrack\!\lbrack u_h^{\tau}\rbrack\!\rbrack,
 \lbrack\!\lbrack u_h^{\tau} \rbrack\!\rbrack\rangle_{\mathcal{E}_h}.
\end{equation}
Therefore, Assumption \ref{Ass:HDG:Primal}, \eqref{error1}, and
\eqref{identity} imply that 
$$ 
\begin{aligned}
\|\boldsymbol p_h^{\tau}-\boldsymbol p_h^{c}\| +
\|u_h^{\tau}-u_h^c\|_{1,h} & \leq \inf_{u_h^I \in V_h^c} 
\left( \|\boldsymbol p_h^\tau - \boldsymbol p_h^c\| + \|u_h^I -
u_h^c\|_{1} + \|u_h^\tau - u_h^I\|_{1,h} \right) \\
& \lesssim  \langle h^{-1}(u_h^\tau - \hat{u}_h^\tau), u_h^\tau -
\hat{u}_h^\tau \rangle_{\partial \mathcal T_h}^{1/2} +  \inf_{u_h^I \in
  V_h^c} \|u_h^\tau - u_h^I\|_{1,h} \\
& \lesssim  \langle h^{-1}(u_h^\tau - \hat{u}_h^\tau), u_h^\tau -
\hat{u}_h^\tau \rangle_{\partial \mathcal T_h}^{1/2} + \sum_{e\in \mathcal
  E_h} h_e^{-1/2} \|[u_h]\|_{0,e} \\
& \lesssim \langle h^{-1}(u_h^\tau - \hat{u}_h^\tau), u_h^\tau -
\hat{u}_h^\tau \rangle_{\partial \mathcal T_h}^{1/2} \\
& \lesssim \rho^{1/2}\|f\|_{-\tilde 1, \rho, h},
\end{aligned}
$$
where Corollary \ref{uniform_stable} was used in the last step. 
\end{proof}

\begin{remark}
We have $\hat{P}_h$ as an identity operator in the definition of
$\|\cdot\|_{\tilde{1},\rho, h}$ (see \eqref{equ:HDG-norms-grad}).
Therefore, when $\rho \lesssim 1$, we have  
$$ 
\begin{aligned}
\inf_{\hat{v}_h \in \widehat{V}_h} \|\tilde{v}_h\|^2_{\tilde{1},
  \rho, h} &= \inf_{\hat{v}_h \in \widehat{V}_h} (\nabla v_h, \nabla
v_h)_{\mathcal T_h} + \sum_{K\in \mathcal T_h} \rho^{-1} h_K^{-1}
\langle v_h - \hat{v}_h, v_h - \hat{v}_h \rangle_{\partial K} \\
& \simeq (\nabla v_h, \nabla v_h)_{\mathcal T_h} + \rho^{-1} 
\sum_{e\in \mathcal E_h}h_e^{-1} \|\llbracket v_h \rrbracket\|_{0,e}^2
\gtrsim \|v_h\|_{1,h}. 
\end{aligned}
$$ 
Hence, when $\rho \lesssim 1$,  
$$ 
\|f\|_{-\tilde 1,\rho,h} = \sup_{\tilde{v}_h \in \widetilde{V}_h} \frac{(f,
    v_h)_{\mathcal T_h}}{\|\tilde{v}_h\|_{\tilde{1},\rho, h}} = 
\sup_{v_h \in V_h} \frac{(f, v_h)_{\mathcal
  T_h}}{\inf_{\hat{v}_h \in
    \widehat{V}_h}\|\tilde{v}_h\|_{\tilde{1},\rho, h}} \lesssim
    \sup_{v_h \in V_h} \frac{(f, v_h)_{\mathcal T_h}}{\|v_h\|_{1,h}}
    \lesssim \|f\|, 
$$ 
which means that the solutions of HDG methods converge to the those of
primal methods of order $\rho^{1/2}$ at least.
\end{remark}

\paragraph{Numerical examples on the convergence from HDG methods to
primal methods}

We present some numerical examples to support the theoretical results.
We consider the 2D Poisson problem described in
\eqref{equ:numerical-example}.  A uniform grid with $h = 1/4$ is fixed
for different $\rho$'s with $\tau = \rho^{-1}h_K^{-1}$. We choose the  
discrete spaces in Example \ref{Example:HDG:primal} for HDG methods.
We observe the convergence from HDG methods to primal methods in Table
\ref{tab:HDG2Primal}. Similar to the numerical examples from WG to
mixed methods, the convergence rate seems to be of order one, which is
higher than our theoretical finding in Theorem \ref{thm:HDG-primal}.

\begin{table}[!htbp] 
\begin{subtable}{.5\textwidth} 
\centering
\caption{$\tau = \rho^{-1}h^{-1}$: $k=0$, uniform grid}
\begin{tabular}{c|cc|cc}
\hline
$\rho$	& $\|u_h^\tau -u_h^c\|_{0}$	&$\rho^\alpha$	&$|u_h^\tau -
u_h^c|_{1,h}$ & $\rho^\alpha$ \\ \hline
1/4 & 0.307403 & -- & 1.639017 & -- \\ \hline
1/8 & 0.205585 & 0.58 & 1.066058 & 0.62\\ \hline
1/16 & 0.137903 & 0.58 & 0.727442 & 0.55\\ \hline 
1/32 & 0.088576 & 0.64 & 0.484124 & 0.59\\ \hline 
1/64 & 0.053204 & 0.73 & 0.300815 & 0.69\\ \hline 
1/128 & 0.029933 & 0.83 & 0.173458 & 0.79\\ \hline 
1/256 & 0.016031 & 0.90 & 0.094330 & 0.88\\ \hline 
1/512 & 0.008321 & 0.95 & 0.049389 & 0.93\\ \hline 
\end{tabular}
\label{tab:HDG-P1-P0}
\end{subtable} %
\begin{subtable}{.5\textwidth} 
\centering
\caption{$\tau = \rho^{-1}h^{-1}$, $k=1$, uniform grid}
\begin{tabular}{c|cc|cc}
\hline
$\rho$	& $\|u_h^\tau -u_h^c\|_{0}$	&$\rho^\alpha$	&$|u_h^\tau -
u_h^c|_{1,h}$ & $\rho^\alpha$ \\ \hline
1/4 & 0.037789 & -- & 0.686199 & -- \\ \hline
1/8 & 0.022577 & 0.74 & 0.383381 & 0.84\\ \hline
1/16 & 0.014676 & 0.62 & 0.240527 & 0.67\\ \hline 
1/32 & 0.009914 & 0.57 & 0.165801 & 0.54 \\ \hline 
1/64 & 0.006539 & 0.60 & 0.114649 & 0.53\\ \hline 
1/128 & 0.004046 & 0.69 & 0.074111 & 0.63\\ \hline 
1/256 & 0.002331 & 0.80 & 0.044007 & 0.75\\ \hline 
1/512 & 0.001269 & 0.88 & 0.024376 & 0.85\\ \hline 
\end{tabular}
\label{tab:HDG-P2-P1}
\end{subtable}
\caption{Convergence rate from HDG to primal methods on 2D uniform grids}
\label{tab:HDG2Primal}
\end{table}

\subsection{Duality relationship}

To make DG-derivatives good approximations of classical weak
derivatives, the dual relationship between gradient and divergence
operators should be preserved as shown in Lemma
\ref{lem:conditional-dual}. The (stabilized) hybrid primal methods and
(stabilized) hybrid mixed methods approximately satisfy conditions
(i) and (ii) in Lemma \ref{lem:conditional-dual},
respectively, and the DG methods adopt condition (iii)
approximately. In this subsection, we discuss the duality relationship
of various Galerkin methods in the context of convex optimization. 

To begin with, we know that the primal form \eqref{H1} can be
characterized as the following saddle point problem: Find
$(\vect{p},u)\in \boldsymbol L^2(\Omega) \times H^1_0(\Omega)$ such
that
$$
L(\vect{p}, u) = \inf_{v\in H_0^1(\Omega)}\sup_{\vect{q}\in
  \boldsymbol L^2(\Omega)} L(\vect{q}, v),
$$
where
$$
L(\vect{q}, v) := - (\vect{q}, \nabla v)_{\Omega} -
\frac{1}{2} (c \vect{q},\vect{q})_\Omega - (f, v)_\Omega. 
$$
In contrast, the mixed form of the elliptic problem is equivalent to
the following saddle point problem: Find $(\vect{p},u)\in \boldsymbol
H({\rm div},  \Omega) \times L^2(\Omega)$ such that
$$
L^*(\vect{p}, u) = \sup_{\vect{q}\in \boldsymbol H({\rm div}, \Omega)}
\inf_{v\in L^2(\Omega)} L^*(\vect{q}, v),
$$
where
$$
L^*(\vect{q},v) := ({\rm div} \vect{q}, v)_{\Omega} -
\frac{1}{2} (c \vect{q}, \vect{q})_{\Omega} - (f,v)_{\Omega}.
$$
The above two saddle point problems are dual with each other from the
point of view of duality in convex optimization \cite{boffi2013mixed,
ekeland1976convex}.

To mimic the above duality at the discrete level, we define the
following optimization target  
$$ 
L_h(\tilde{\vect{q}}_h, v_h) = -(\DGnabla v_h,
    \tilde{\vect{q}}_h)_{\Omega} - \frac{1}{2} (c {\vect{q}}_h,
      {\vect{q}}_h)_\Omega - (f_h, v_h)_\Omega,
$$ 
and the hybrid primal methods \eqref{eq:hybrid primal} can be derived
from the optimization problem   
\begin{equation} \label{equ:hybrid-primal-optimization} 
\inf_{v_h \in V_h} \sup_{\tilde{\vect{q}}_h \in
  \widetilde{\vect{Q}}_h} L_h(\tilde{\vect{q}}_h, v_h).
\end{equation} 
Furthermore, the hybrid mixed methods \eqref{eq:hybrid-mixed} can be
derived from the optimization problem 
\begin{equation} \label{equ:hybrid-mixed-optimization} 
\sup_{\vect{q}_h \in \vect{Q}_h} \inf_{\tilde{v}_h \in
\widetilde{V}_h} L_h^*(\vect{q}_h, \tilde{v}_h),
\end{equation} 
where 
$$ 
L_h^*(\vect{q}_h,\tilde{v}_h) := ( {\rm div}_{\rm dg}
\vect{q}_h, \tilde{v}_h)_{\Omega} - \frac{1}{2} ( c\vect{q}_h,
\vect{q}_h)_{\Omega} - (f_h, v_h)_\Omega.
$$ 
We immediately see that hybrid primal and hybrid mixed methods are dual
to each other as \eqref{equ:hybrid-primal-optimization} and
\eqref{equ:hybrid-mixed-optimization} are mutually dual in the
context of convex optimization. 

A similar argument can be applied to the stabilized methods. Given the
stabilized hybrid primal (WG) methods \eqref{eq:WGprimal}, we can prove
that it is equivalent to the optimization problem  
\begin{equation} \label{equ:stab-hybrid-primal-optimization} 
\inf_{v_h \in V_h} \sup_{\tilde{\vect{q}}_h \in
\widetilde{\vect{Q}}_h} L_{h}(\tilde{\vect{q}}_h, v_h) -
\frac{1}{2} \langle \eta(\vect{p}_h - \hat{\vect p}_h) \cdot \vect{n}
, (\vect{p}_h - \hat{\vect p}_h) \cdot \vect{n}
\rangle_{\partial\mathcal T_h},
\end{equation} 
The stabilized hybrid mixed (HDG) methods \eqref{eq:hybridgalerkin} is
equivalent to the optimization problem 
\begin{equation} \label{equ:stab-hybrid-mixed-optimization} 
\sup_{\vect{q}_h \in \vect{Q}_h} \inf_{\tilde{v}_h \in
  \widetilde{V}_h} L_{h}^*(\vect{q}_h, \tilde{v}_h) +
  \frac{1}{2}\langle \tau(v_h - \hat{v}_h), v_h - \hat{v}_h
  \rangle_{\partial \mathcal T_h}.
\end{equation} 
Since primal DG and mixed DG can be deduced formally from stabilized
hybrid mixed methods and stabilized hybrid primal methods, respectively, by taking
$\hat{u}_h = \{u_h\}$ and $\hat{\vect{p}}_h = \{\vect{p}_h\}$, respectively,
they are formally dual with each other as well.  

\section{Summary} \label{sec:concluding}

In this paper, we present a unified study for the design of various
finite element methods through the concept of DG-derivatives. Then we
compare these methods and show their relationships in Table
\ref{tab:FEMs}. We find that the schemes of stabilized hybrid mixed
methods and stabilized hybrid primal methods are mutually dual, and
hybrid primal and hybrid mixed are dual with each other as well.

Furthermore, we see that each finite element method approximates
either a primal or mixed form of the problem. Continuity of $u$ is
needed for the primal form, and on the other hand, $\boldsymbol H({\rm
div})$ continuity of $\vect{p}$ is required for the mixed form. To
design finite element methods, we have to use certain mechanics to
make the numerical approximations $u_h$ or $\vect{p}_h$ satisfy
certain continuity requirements. There are five approaches: 
(1) choosing a finite element space with strongly continuity (conforming FEMs);  
(2) choosing a finite element space with weakly continuity (nonconforming FEMs); 
(3) using the Lagrange multiplier to force the continuity (hybrid methods); and
(4) using the Lagrange multiplier and stabilization to force the
continuity (HDG and WG methods); 
(5) adding a penalty term in the weak form (DG methods).

Through this study, we derive the mixed DG methods. The well-posedness
of this method is proven, and optimal error estimates are obtained. We
also present rigorous proofs of the convergence from WG to mixed
methods as well as the convergence from HDG to primal methods, as the
stabilization parameter goes to infinity.

There are some other important FEMs that are not covered by our
framework, such as finite volume methods \cite{eymard2000finite},
mimetic finite difference methods \cite{brezzi2005family,
brezzi2005convergence}, and virtual element methods
\cite{beirao2013basic, brezzi2014basic}.  Our study is mainly done for
second-order elliptic boundary value problems. Extension of this study
to higher-order problems is currently under investigation and will be
reported in a future work.

\bibliography{0XDGR1}
\bibliographystyle{abbrv}

\end{document}